\newtheorem{theorem}{Theorem}[section]
\newtheorem{corollary}[theorem]{Corollary}
\newtheorem{lemma}[theorem]{Lemma}
\newtheorem{proposition}[theorem]{Proposition}
\theoremstyle{definition}
\theoremstyle{remark}
\newtheorem{rem}[theorem]{Remark}
\newcommand{\R}{{\mathbb R}}
\newcommand{\N}{{\mathbb N}}
\renewcommand{\leq}{\leqslant}
\renewcommand{\le}{\leqslant}
\renewcommand{\ge}{\geqslant}
\numberwithin{equation}{section}
\newcommand{\Per}{{\rm{Per}}}
\newcommand{\PPer}{{\rm{Per}}^\star}
\begin{document}

\author{Serena Dipierro}
\address{School of Mathematics and Statistics,
University of Melbourne,
813 Swanston Street, Parkville VIC 3010, Australia, and
Dipartimento di Matematica, Universit\`a degli studi di Milano,
Via Saldini 50, 20133 Milan, Italy}
\email{s.dipierro@unimelb.edu.au}

\author{Aram Karakhanyan}
\address{Maxwell Institute for
Mathematical Sciences and School of Mathematics, University of
Edinburgh, James Clerk Maxwell Building, Peter Guthrie Tait Road,
Edinburgh EH9 3FD, United Kingdom.}
\email{aram.karakhanyan@ed.ac.uk}

\author{Enrico Valdinoci}
\address{School of Mathematics and Statistics,
University of Melbourne,
813 Swanston Street, Parkville VIC 3010, Australia, and
Istituto di Matematica Applicata e Tecnologie 
Informatiche, Consiglio Nazionale delle Ricerche, Via Ferrata 1, 27100
Pavia, Italy,
and Dipartimento di Matematica, Universit\`a degli studi di Milano,
Via Saldini 50, 20133 Milan, Italy.}
\email{enrico@math.utexas.edu}

\title{A class of unstable free boundary problems}

\begin{abstract}
We consider the free boundary problem
arising from an energy functional which is
the sum of a Dirichlet energy and a nonlinear
function of either the classical or the fractional perimeter.

The main difference with the existing literature is that
the total energy is here a nonlinear superposition
of the either local or nonlocal surface tension effect
with the elastic energy.

In sharp contrast with the linear case, the problem
considered in this paper is unstable, namely a minimizer
in a given domain is not necessarily a minimizer in a smaller domain.

We provide an explicit example for this instability.
We also give a free boundary condition, which
emphasizes the role played by the domain in the geometry
of the free boundary. In addition, we provide density
estimates for the free boundary and regularity results for the minimal solution.

As far as we know, this is the first case in which a nonlinear
function of the 
perimeter is studied in this type of problems.
Also, the results obtained in this nonlinear
setting are new even in the case of the local perimeter, and indeed
the instability feature is not a consequence of the possibly nonlocality
of the problem, but it is due to the nonlinear character of the energy functional.
\end{abstract}
\maketitle

\section{Introduction}

In this paper we consider a free boundary problem
given by the superposition of a Dirichlet energy
and an either classical or nonlocal perimeter functional. Differently from
the existing literature, here we take into account the possibility
that this energy superposition occurs in a nonlinear way,
that is the total energy functional is the sum of the Dirichlet energy plus
a nonlinear function of the either local or nonlocal perimeter of
the interface.\medskip

Unlike the cases already present in the literature,
the nonlinear problem that we study may present a structural
instability induced by the domain, namely a minimizer
in a large domain may fail to be a minimizer in a small domain.
This fact prevents the use of the scaling arguments, which
are frequently exploited in classical free boundary problems.\medskip

In this paper, after providing an explicit example of this type of
structural instability, we describe the free boundary equation,
which also underlines the striking role played by the total
(either local or nonlocal) perimeter of the minimizing set in the domain,
as modulated by the nonlinearity, 
in the local geometry of the interface.
Then, we will present results concerning 
the H\"older regularity of the minimal solutions
and the density of the interfaces in the one-phase problem.
\medskip

The mathematical setting in which we work is the following.
Given an (open, Lipschitz and bounded) 
domain~$\Omega\subset\R^n$ and~$\sigma\in(0,1]$,
we use the notation~$\Per_\sigma(E,\Omega)$ for the classical
perimeter of~$E$ in~$\Omega$
when~$\sigma=1$ (which will be often denoted as~$\Per(E,\Omega)$,
see e.g.~\cites{AFP, maggi})
and the fractional perimeter of~$E$ in~$\Omega$
when~$\sigma\in(0,1)$ (see~\cite{CRS}).
More explicitly, if~$\sigma\in(0,1)$, we have that
\begin{equation}\label{9g12:LA} 
\Per_\sigma (E,\Omega):=
L(E\cap \Omega,E^c) + L(E^c\cap\Omega,E\cap\Omega^c),\end{equation}
where, for any measurable subsets~$A$, $B\subseteq \R^n$
with~$A\cap B$ of measure zero, we set
$$ L(A,B) := \iint_{A\times B}\frac{dx\,dy}{|x-y|^{n+\sigma}}.$$
As customary, we are using here the superscript~$c$
for complementary set, i.e.~$E^c:=\R^n\setminus E$.

The notation used for~$\Per_\sigma$ when~$\sigma=1$
is inspired by the fact that~$\Per_\sigma$, suitably rescaled,
approaches the classical perimeter as~$s\nearrow1$,
see e.g.~\cites{BoBrM, DAV, CVCV, ADMa}.

In our framework, the role played by the fractional
perimeter is to allow long-range interaction
to contribute to the energy arising from surface tension
and phase segregation.

As a matter of fact, the fractional perimeter~$\Per_\sigma$
naturally arises when one considers phase transition models
with long-range particle interactions (see e.g.~\cite{MR3133422}):
roughly speaking, in this type of models, the remote
interactions of the particles are sufficently strong to persist
even at a large scale, by possibly modifying the behavior of the
phase separation. 

The fractional perimeter~$\Per_\sigma$ has also natural
applications in motion by nonlocal mean curvatures,
which in turn arises naturally in the study of cellular automata
and in the image digitalization procedures (see e.g.~\cite{imbert}).
\medskip

It is also convenient\footnote{The explicit value of~$\Upsilon$
plays no major role, since it can be fixed by an ``initial
scaling'' of the problem, but we decided to require it to be
less than~$\frac{1}{100}$ to emphasize, from the psychological point of view,
that~$\Omega_\Upsilon$ can be thought as a small enlargement of~$\Omega$.

The reason for which we introduced such~$\Upsilon$ is that, in the classical
case, the interfaces inside~$\Omega$ do not see the contributions
that may come along~$\partial\Omega$, since~$\Omega$ is taken to be open
(viceversa, in the nonlocal case, these contributions are always counted).
By enlarging the domain~$\Omega$ by a small quantity~$\Upsilon$,
we are able to count also the contributions on~$\partial\Omega$
and this, roughly speaking, boils down to computing the classical perimeter
in the closure of~$\Omega$.}
to fix~$\Upsilon\in \left(0,\frac{1}{100}\right]$
and set
\begin{equation}\label{PPP:D}
\begin{split}
&\Omega_\Upsilon :=\bigcup_{p\in\Omega} B_\Upsilon (p)\\
{\mbox{and }}\qquad&\PPer_\sigma (E,\Omega) =
\left\{ \begin{matrix}\Per(E,\Omega_\Upsilon) & {\mbox{if $\sigma=1$,}}\\
\Per_\sigma(E,\Omega) & {\mbox{if $\sigma\in(0,1)$}}. \end{matrix}
\right.
\end{split}
\end{equation}
We consider a monotone nondecreasing and lower semicontinuous
function~$\Phi:[0,+\infty)\to [0,+\infty)$, with
\begin{equation}\label{COE}
\lim_{t\to+\infty} \Phi(t)=+\infty.
\end{equation}
For any measurable function~$u:\R^n\to\R$,
such that~$|\nabla u|\in L^2(\Omega)$ and any measurable
subset~$E\subseteq\R^n$
such that~$u\ge0$ a.e. in~$E$ and~$u\le0$ a.e. in~$E^c$, we consider
the energy functional
\begin{equation}\label{FUNCT}
{\mathcal{E}}_\Omega(u,E):=
\int_\Omega |\nabla u(x)|^2\,dx + \Phi\Big( \PPer_\sigma (E,\Omega)\Big)
.\end{equation}
As usual, 
the notation~$\nabla u$ stands for the distributional
gradient. 

When~$\Phi$ is the identity, the functional in~\eqref{FUNCT}
provides a typical problem for (either local or nonlocal)
free boundary problems, see~\cites{salsa, CSV}.

The goal of this
paper is to study the minimizers
of the functional in~\eqref{FUNCT}.
For this, we say that~$(u,E)$ is an admissible pair
if:
\begin{itemize}
\item $u:\R^n\to\R$ is a
measurable function
such that~$u\in H^1(\Omega)$,
\item $E\subseteq\R^n$ is a measurable set with~$\PPer_\sigma(E,\Omega)<+
\infty$, and
\item $u\ge0$ a.e. in~$E$ and~$u\le0$ a.e. in~$E^c$.\end{itemize}
Then, we say that~$(u,E)$ is a minimal pair in~$\Omega$ if
\begin{itemize}
\item $(u,E)$ is an admissible pair,
\item ${\mathcal{E}}_\Omega(u,E)<+\infty$, and
\item for any admissible pair~$(v,F)$ such that~$v-u\in H^1_0(\Omega)$
and~$F\setminus\Omega= E\setminus \Omega$ 
up to sets of measure zero, we have that
$$ {\mathcal{E}}_\Omega(u,E)\le {\mathcal{E}}_\Omega(v,F).$$
\end{itemize}
The existence\footnote{As a technical remark,
we point out that the definition in~\eqref{PPP:D}
is useful to make sense of nontrivial versions
of this minimization problem when~$\sigma=1$ and~$u\ge0$.
Indeed, in this case, the setting
in~\eqref{PPP:D} ``forces'' the sets to interact with the
boundary data. This expedient is not necessary when~$\sigma=0$
since, in this case, the nonlocal effect produces the nontrivial
interactions.} 
of minimal pairs for fixed domains
and fixed conditions outside the domain follows from
the direct methods in the calculus of variations (see Lemma~\ref{EX}
below for details).
\medskip

A natural question in this framework is whether or not
this minimization procedure is ``stable'' with respect to
the choice of the domain, i.e. whether or not
a minimal pair in a domain~$\Omega$ is also
a minimal pair in any subdomain~$\Omega'\subset\Omega$. 
This stability property is indeed typical
for ``linear'' free boundary problems, i.e. when~$\Phi$
is the identity, see~\cites{salsa, CSV},
and it often plays a crucial role in many arguments
based on scaling and blow-up analysis.

In the ``nonlinear'' case, i.e. when~$\Phi$ is not the identity,
this stability property is lost,
and we will provide a concrete example for that.
In further detail, we 
consider the planar case of~$\R^2$,
we take coordinates~$X:=(x,y)\in\R^2$ and we set
\begin{equation}\label{tilde u}
\tilde u(x,y):= xy \end{equation}
and
\begin{equation}\label{tilde E}
\begin{split}
\tilde E\,&:=\{(x,y)\in\R^2
{\mbox{ s.t. }} xy>0\}\\ &=
\{(x,y)\in\R^2 {\mbox{ s.t. }} x>0 {\mbox{ and }}y>0\}
\cup
\{(x,y)\in\R^2 {\mbox{ s.t. }} x<0 {\mbox{ and }}y<0\}.\end{split}\end{equation}
In this setting, we
show that:

\begin{theorem}[An explicit counterexample]\label{EXAMPLE}
There exists~$K_o>2$ such that the following statement is true.
Let~$n=2$.
Assume that
$$ {\mbox{$\Phi(t) = t^\gamma$ for any~$t\in[0,1]$}}$$
for some
$$ \gamma\in\left(0,\frac{4}{2-\sigma}\right),$$
and
\begin{equation} \label{ART}
{\mbox{$\Phi(t)=1$ for any~$t\in[ 2,K_o]$. }}\end{equation}
Then, there exist~$R_o>r_o>0$ such that~$(\tilde u,\tilde E)$
is a minimal pair in~$B_{R_o}$
and is not a minimal pair in~$B_r$ for any~$r\in(0,r_o]$.
\end{theorem}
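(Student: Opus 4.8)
The plan is to pit the harmonicity of $\tilde u$ against the structure of $\Phi$ and the homogeneity of the perimeter. Since $\tilde u(x,y)=xy$ is harmonic, for every competitor $(v,F)$ with $v-\tilde u\in H^1_0(B_\rho)$ the Dirichlet principle gives $\int_{B_\rho}|\nabla v|^2\ge\int_{B_\rho}|\nabla \tilde u|^2=\tfrac{\pi}{2}\rho^4$, so only the perimeter term can decide minimality. On the other hand, since $\tilde E$ is a cone, $\PPer_\sigma(\tilde E,B_\rho)=c_\sigma\rho^{2-\sigma}$ when $\sigma\in(0,1)$ (with $c_\sigma:=\PPer_\sigma(\tilde E,B_1)\in(0,+\infty)$), whereas $\PPer_\sigma(\tilde E,B_\rho)=4(\rho+\Upsilon)$ when $\sigma=1$; moreover, by monotonicity of $\Phi$ together with $\Phi(1)=1^\gamma=1=\Phi(2)$, one has $\Phi\equiv1$ on $[1,K_o]$ while $\Phi(t)=t^\gamma<1$ strictly on $[0,1)$. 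So in a suitably large ball the interface perimeter lands in the flat regime $[1,K_o]$ of $\Phi$, where no competitor can gain, while in small balls the perimeter is $<1$ and $\Phi$ there is the power $t^\gamma$, with exponent small enough (precisely because $\gamma<\tfrac{4}{2-\sigma}$) that shrinking the interface beats the resulting increase of Dirichlet energy.

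For the minimality in a large ball I would set
\[
m_\sigma:=\inf\bigl\{\PPer_\sigma(G,B_1):\ G\setminus B_1=\tilde E\setminus B_1\bigr\},
\]
which is finite and strictly positive because the boundary trace of $\tilde E$ on $\partial B_1$ is nontrivial, and in fact $m_\sigma<c_\sigma$: the set $\tilde E$ does not attain this infimum, since its interface is the union of the two coordinate axes, which has a genuine singularity at the origin and is therefore not $\sigma$-minimal (classical for $\sigma=1$; for $\sigma\in(0,1)$ by the regularity theory for $\sigma$-minimal sets in $\R^2$). By scaling, the least competitor perimeter in $B_\rho$ equals $m_\sigma\rho^{2-\sigma}$ when $\sigma\in(0,1)$ and $4\Upsilon+2\sqrt2\,\rho$ when $\sigma=1$ (connecting the four alternating boundary arcs through $B_\rho$ by the two shortest chords). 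Then I pick $K_o>2$ large, and $R_o>0$, so that $\PPer_\sigma(\tilde E,B_{R_o})\in[1,K_o]$ while the least competitor perimeter in $B_{R_o}$ is $\ge1$; this is possible since the ratio of these two quantities is the fixed number $m_\sigma/c_\sigma\in(0,1]$ (respectively, bounded below by $1/\sqrt2$), so it suffices to place $\PPer_\sigma(\tilde E,B_{R_o})$ anywhere in $[\max\{1,c_\sigma/m_\sigma\},K_o]$ — e.g. for $\sigma=1$ one may take $R_o=\tfrac12-\Upsilon$. For such $R_o$ and any competitor $(v,F)$, the Dirichlet principle gives $\int_{B_{R_o}}|\nabla v|^2\ge\int_{B_{R_o}}|\nabla\tilde u|^2$, while $\PPer_\sigma(F,B_{R_o})\ge1$ forces $\Phi(\PPer_\sigma(F,B_{R_o}))\ge\Phi(1)=1=\Phi(\PPer_\sigma(\tilde E,B_{R_o}))$; summing, ${\mathcal{E}}_{B_{R_o}}(v,F)\ge{\mathcal{E}}_{B_{R_o}}(\tilde u,\tilde E)$, so $(\tilde u,\tilde E)$ is a minimal pair in $B_{R_o}$.

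For the non-minimality in small balls I would build an explicit competitor. Let $F^\star$ be a minimizer of $G\mapsto\PPer_\sigma(G,B_1)$ among sets with $G\setminus B_1=\tilde E\setminus B_1$ (when $\sigma=1$ one can simply take the explicit set whose interface in $B_1$ is the two chords from $(\pm1,0)$ to $(0,\pm1)$). Its interface in $B_1$ is, by planar regularity theory, a finite union of $C^{1,\alpha}$ arcs joining the four points $(\pm1,0),(0,\pm1)$; I define $w$ on $B_1$ by $w=0$ on this interface and, in each connected component of $F^\star\cap B_1$ and of $B_1\setminus F^\star$, let $w$ be the harmonic function with boundary datum $0$ on the interface and $\tilde u$ on the adjacent arc of $\partial B_1$. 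Since $F^\star=\tilde E$ on $\partial B_1$, the arcs adjacent to $F^\star$ are those where $\tilde u\ge0$ and those adjacent to $B_1\setminus F^\star$ are those where $\tilde u\le0$, so the maximum principle yields $w\ge0$ on $F^\star$ and $w\le0$ on its complement; $w$ is continuous (equal to $0$) across the interface, hence $w\in H^1(B_1)$, $w-\tilde u\in H^1_0(B_1)$, $(w,F^\star)$ is admissible, and $C_\sigma:=\int_{B_1}|\nabla w|^2<+\infty$. Rescaling (using that $\tilde E$ is a cone and $\tilde u$ is $2$-homogeneous), the pair $(w_\rho,F_\rho)$ with $w_\rho(X):=\rho^2 w(X/\rho)$, $F_\rho:=\rho F^\star$, is admissible in $B_\rho$, with $\int_{B_\rho}|\nabla w_\rho|^2=C_\sigma\rho^4$ and $\PPer_\sigma(F_\rho,B_\rho)=m_\sigma\rho^{2-\sigma}$ (resp. $=4\Upsilon+2\sqrt2\,\rho$). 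For $\rho$ small both perimeters are $<1$, so $\Phi$ equals $t^\gamma$ on the relevant values and
\[
{\mathcal{E}}_{B_\rho}(\tilde u,\tilde E)-{\mathcal{E}}_{B_\rho}(w_\rho,F_\rho)
=\Bigl(\tfrac{\pi}{2}-C_\sigma\Bigr)\rho^4+\bigl(c_\sigma^\gamma-m_\sigma^\gamma\bigr)\,\rho^{(2-\sigma)\gamma}
\]
(with $(\tfrac{\pi}{2}-C_1)\rho^4+(4\Upsilon+4\rho)^\gamma-(4\Upsilon+2\sqrt2\,\rho)^\gamma$ in place of the right-hand side when $\sigma=1$). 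The second term is strictly positive since $m_\sigma<c_\sigma$ (resp. is positive and of order $\rho$, because $\Upsilon>0$ keeps $t^\gamma$ nondegenerate at $t=4\Upsilon$), while $(2-\sigma)\gamma<4$ precisely by the hypothesis $\gamma<\tfrac{4}{2-\sigma}$; hence the right-hand side is strictly positive for all $\rho\in(0,r_o]$, for a suitable $r_o\in(0,R_o)$, and $(\tilde u,\tilde E)$ fails to be a minimal pair in $B_r$ for any $r\le r_o$.

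The hard part will be the construction of the competitor $w_\rho$ with finite Dirichlet energy: this needs the minimizing set $F^\star$ to have an interface regular enough (a $C^{1,\alpha}$, or at least Lipschitz, curve) for the piecewise-harmonic $w$ to glue to an $H^1$ function, which rests on the regularity theory for $\sigma$-minimal sets in the plane (for $\sigma=1$ it is bypassed by the explicit two-chord competitor). A secondary technical point is the scaling-invariant lower bound ``$\PPer_\sigma(F,B_{R_o})\ge1$ for every competitor $F$ in $B_{R_o}$'', i.e.\ the positivity and the correct homogeneity of $m_\sigma$ needed to make the large-ball argument go through.
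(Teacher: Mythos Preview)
Your overall strategy is sound and matches the paper's: exploit that $\tilde u$ is harmonic so the Dirichlet term can never be improved, and then play the two regimes of $\Phi$ against each other. The large-ball half is essentially the same as the paper's argument (Proposition~3.3 and Corollary~3.4); the paper obtains the needed uniform lower bound on competitor perimeters by a compactness argument that also uses the $H^1_0$ constraint on $v$ (Lemmas~3.1--3.2), whereas you work with the purely geometric infimum $m_\sigma$. Both routes require the same compactness step to establish positivity, so there is no real saving either way.

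The genuine difference is in the small-ball half, and here you are making your life harder than necessary. You take the actual $\sigma$-perimeter minimizer $F^\star$ and build a piecewise-harmonic $w$ adapted to its interface, which forces you to invoke the planar regularity theory for $\sigma$-minimal sets just to know that $w\in H^1(B_1)$ --- precisely the ``hard part'' you flag at the end. The paper (Proposition~3.5) bypasses this entirely. It only needs \emph{some} set $E_\sharp$ with $E_\sharp\setminus \overline{B_{1/2}}=\tilde E\setminus \overline{B_{1/2}}$ and $\Per_\sigma(E_\sharp,\overline{B_{1/2}})\le\Per_\sigma(\tilde E,\overline{B_{1/2}})-a$ for some fixed $a>0$ (such a set exists simply because $\tilde E$ is not a $\sigma$-minimal cone), and pairs it with the crude function $u_\sharp(X)=\tilde u(X)\,\psi(X)$, where $\psi\in C^\infty(\R^2,[0,1])$ vanishes on $B_{3/4}$ and equals $1$ outside $B_{9/10}$. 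Since $u_\sharp\equiv 0$ on $B_{3/4}\supset \overline{B_{1/2}}$, the pair $(u_\sharp,E_\sharp)$ is automatically admissible whatever the shape of $E_\sharp\cap B_{1/2}$, and $\int_{B_1}|\nabla u_\sharp|^2\le C$ is trivial. After rescaling one obtains the same comparison of a positive term of order $\rho^{(2-\sigma)\gamma}$ (or of order $\rho$ when $\sigma=1$, via the $\Upsilon$-shift) against $O(\rho^4)$, and $\gamma<4/(2-\sigma)$ finishes. So the regularity of the competing interface is simply irrelevant if one uses a cutoff in place of a harmonic replacement; your worry about $F^\star$ can be dropped.
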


The heuristic idea underneath Theorem~\ref{EXAMPLE} is, roughly speaking,
that the nonlinear energy term~$\Phi$ weights differently
the fractional perimeter with respect to the Dirichlet energy
in different energy regimes, so it may favor a minimal pair~$(u,E)$
to be either ``close to a harmonic function'' in the~$u$
or ``close to a fractional minimal surface'' in the~$E$,
depending on the minimal energy level reached in a given domain.
\medskip

It is worth stressing that, in other circumstances,
rather surprising instability features in interface problems
arise as a consequence of the fractional behavior of the energy,
see for instance~\cite{stick}. Differently from these cases,
the unstable free boundaries presented in Theorem~\ref{EXAMPLE}
are not caused by the existence of possibly nonlocal features,
and indeed Theorem~\ref{EXAMPLE} holds true (and is new) even in the
case of the local perimeter.\medskip

The instability phenomenon 
pointed out by Theorem~\ref{EXAMPLE} in a concrete case
is also quite general, as it can be understood
also in the light of the associated equation on the free
boundary. Indeed, the free boundary equation
takes into account a ``global'' term of the type~$\Phi'\Big(\PPer_\sigma(E,\Omega)\Big)$,
which varies in dependence of the domain~$\Omega$.
To clarify this point, we denote by~$H_{\sigma}^E$
the (either classical or fractional) mean curvature of~$\partial E$
(see~\cites{CRS, AV} for the case~$\sigma\in(0,1)$). Namely, if~$\sigma=1$
the above notation stands for the classical
mean curvature, while if~$\sigma\in(0,1)$, if~$x\in\partial E$, we set
$$ H_{\sigma}^E(x):=\limsup_{\delta\to0}
\int_{\R^n\setminus B_\delta(x)} \frac{\chi_{E^c}(y)-\chi_{E}(y)}{|x-y|^{n+\sigma}}\,dy.$$
In this setting, we have:

\begin{theorem}[Free boundary equation]\label{FBW}
Let~$\Phi\in C^{1,\alpha}(0,+\infty)$, for some~$\alpha\in(0,1)$.
Assume that~$(u,E)$ is a minimal pair in~$\Omega$.
Assume that
\begin{equation}\label{II51}
\begin{split}
&{\mbox{$(\partial E)\cap\Omega$ is of class~$C^{1,\tau}$ 
with~$\tau\in(\sigma,1)$ when~$\sigma\in(0,1)$}}\\
&{\mbox{and of class~$C^2$ when~$\sigma=1$.}}\end{split}\end{equation}
Suppose also
that
\begin{equation}\label{7u994a5}
{\mbox{$u>0$ in the interior of~$E\cap\Omega$,
that~$u<0$ in the interior of~$E^c\cap\Omega$,}}\end{equation}
and that
\begin{equation}\label{II52}
u\in C^1(\overline{\{u>0\}\cap\Omega})\cap
C^1(\overline{\{u<0\}\cap\Omega}).\end{equation}
Let also~$\nu$ be the exterior normal of~$E$,
and for any~$x\in(\partial E)\cap\Omega$ let
\begin{equation}\label{II53}
\partial_{\nu}^+ u (x):=\lim_{t\to0}\frac{u(x-t\nu)-u(x)}{t}
\;{\mbox{ and }}\;
\partial_{\nu}^- u (x):=\lim_{t\to0}\frac{u(x+t\nu)-u(x)}{t}.\end{equation}
Then, for any~$x\in(\partial E)\cap\Omega$, we have
\begin{equation}\label{MCE}
\big( \partial_{\nu}^+ u (x)\big)^2
-\big( \partial_{\nu}^- u (x)\big)^2
= H_{\sigma}^E(x)\,\Phi'\Big(
\PPer_\sigma (E,\Omega)
\Big).\end{equation}
\end{theorem}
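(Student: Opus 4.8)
The plan is to derive the free boundary condition \eqref{MCE} by performing a domain variation (inner variation) of the energy functional \eqref{FUNCT} along a smooth compactly supported vector field~$Y\in C^\infty_c(\Omega;\R^n)$, and to use the minimality of~$(u,E)$ to conclude that the first variation vanishes. Concretely, I would let~$\Psi_t(x):=x+tY(x)$ be the associated flow of diffeomorphisms for~$|t|$ small, set~$E_t:=\Psi_t(E)$ and~$u_t:=u\circ\Psi_t^{-1}$, and observe that~$(u_t,E_t)$ is an admissible competitor (the sign conditions are preserved since~$\Psi_t$ maps~$E$ onto~$E_t$, and~$u_t-u\in H^1_0(\Omega)$ because~$Y$ is compactly supported in~$\Omega$, and~$E_t\setminus\Omega=E\setminus\Omega$). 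Then~$t\mapsto{\mathcal{E}}_\Omega(u_t,E_t)$ has a minimum at~$t=0$, so its derivative at~$t=0$ is zero.

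The computation splits into two pieces according to the structure of \eqref{FUNCT}. For the Dirichlet part, the classical inner-variation formula gives
\begin{equation*}
\frac{d}{dt}\Big|_{t=0}\int_\Omega|\nabla u_t|^2\,dx
=\int_\Omega\Big(|\nabla u|^2\,{\rm div}\,Y-2\,\nabla u\cdot (DY)\,\nabla u\Big)\,dx,
\end{equation*}
and using the regularity hypotheses \eqref{7u994a5}--\eqref{II52} together with the fact that~$u$ is harmonic on each side of~$(\partial E)\cap\Omega$ (a consequence of minimality, obtained by varying~$u$ alone with~$E$ fixed), an integration by parts on~$\{u>0\}\cap\Omega$ and on~$\{u<0\}\cap\Omega$ separately converts this into a boundary integral over~$(\partial E)\cap\Omega$. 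The jump of the normal derivatives across~$\partial E$ produces exactly the term
\begin{equation*}
-\int_{(\partial E)\cap\Omega}\Big(\big(\partial_\nu^+u\big)^2-\big(\partial_\nu^-u\big)^2\Big)\,(Y\cdot\nu)\,d\mathcal{H}^{n-1}
\end{equation*}
(up to the sign bookkeeping in \eqref{II53}). For the perimeter part, by the chain rule, since~$\Phi\in C^{1,\alpha}$ and~$\PPer_\sigma(E,\Omega)$ is finite and varies smoothly under the flow, we get
\begin{equation*}
\frac{d}{dt}\Big|_{t=0}\Phi\big(\PPer_\sigma(E_t,\Omega)\big)
=\Phi'\big(\PPer_\sigma(E,\Omega)\big)\cdot\frac{d}{dt}\Big|_{t=0}\PPer_\sigma(E_t,\Omega),
\end{equation*}
and the first variation of the (classical or fractional) perimeter under the vector field~$Y$ is the well-known expression~$\int_{(\partial E)\cap\Omega}H^E_\sigma\,(Y\cdot\nu)\,d\mathcal{H}^{n-1}$, valid under the regularity assumed in \eqref{II51} (this is where the condition~$\tau\in(\sigma,1)$ is needed, to make the fractional mean curvature well-defined and the first variation formula rigorous, cf.~\cites{CRS,AV}). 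Setting the total derivative to zero and using that~$Y\cdot\nu$ is an essentially arbitrary function on~$(\partial E)\cap\Omega$ yields \eqref{MCE} pointwise by the fundamental lemma of the calculus of variations.

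The main obstacle I anticipate is the rigorous justification of differentiating under the integral sign and of the first-variation formula for~$\PPer_\sigma$ in the nonlocal case~$\sigma\in(0,1)$: one must check that the flow~$\Psi_t$ does not destroy the finiteness of~$L(E_t\cap\Omega,E_t^c)+L(E_t^c\cap\Omega,E_t\cap\Omega^c)$, that the difference quotients converge (which needs control on the singular kernel~$|x-y|^{-n-\sigma}$ near~$\partial E$, precisely where the~$C^{1,\tau}$ with~$\tau>\sigma$ hypothesis buys integrability), and that the regime~$E_t\setminus\Omega=E\setminus\Omega$ is compatible with the definition \eqref{9g12:LA} involving the complement outside~$\Omega$ — the second term $L(E^c\cap\Omega,E\cap\Omega^c)$ contributes to the variation only through its first argument since the part outside~$\Omega$ is frozen. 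A secondary technical point is handling the Dirichlet integration by parts when~$u$ is only~$C^1$ up to~$\partial E$ from each side and~$\partial E$ is only~$C^{1,\tau}$: one approximates by smooth domains or works with the outer/inner normal derivatives directly, using \eqref{II52}. Once these two localization-and-regularity issues are dispatched, combining the two first-variation formulas and invoking the arbitrariness of~$Y\cdot\nu$ is routine.
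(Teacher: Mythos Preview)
Your proposal is correct and follows essentially the same route as the paper: a domain variation via a compactly supported vector field, admissibility of the deformed pair, expansion of the Dirichlet energy through the inner-variation formula followed by integration by parts on each phase using harmonicity, and the chain rule applied to~$\Phi$ composed with the first variation of~$\PPer_\sigma$, which the paper sources from~\cite{I5} (rather than~\cites{CRS,AV}) to handle the fractional case under the~$C^{1,\tau}$ hypothesis. The only cosmetic difference is that the paper sets up the deformation as the ODE flow~$\partial_t y=V(y)$ rather than the affine map~$x+tY(x)$, but to first order in~$t$ these coincide and the computations are identical.
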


We remark that equation~\eqref{MCE} has a simple
geometric consequence when~$\Phi'>0$ and we consider the
one-phase problem in which~$u\ge0$: indeed, in this case,
we have that~$\partial_{\nu}^- u=0$ and therefore
formula~\eqref{MCE} reduces to
$$ \big( \partial_{\nu}^+ u (x)\big)^2
= H_{\sigma}^E(x)\,\Phi'\Big(
\PPer_\sigma (E,\Omega)\Big).$$
In particular, we get that~$H_{\sigma}^E\ge0$, namely,
in this case, the (either classical or fractional) mean curvature of
the free boundary is nonnegative.
\medskip

In order to better understand the structure
of the solution and of the free boundary points, we now focus,
for the sake of simplicity, to the one-phase case,
i.e. we suppose that~$u\ge0$ to start with.
In this setting, 
we investigate the H\"older regularity of the
function~$u$,
by obtaining uniform bounds and uniform growth
conditions from the free boundary. 
For this, it is also convenient to introduce the auxiliary 
set
\begin{equation} \label{DEF:0}
{\mathcal{U}}_0 := \big\{ x\in\Omega {\mbox{ s.t.
there exists a sequence }} x_k\in\Omega {\mbox{ s.t. }}
x_k\to x {\mbox{ with }}
u(x_k)\to 0 {\mbox{ as }}k\to+\infty\big\}.\end{equation}
Notice that~$\{u=0\}$ lies in~${\mathcal{U}}_0$ (just taking
a constant sequence in the definition above).
Also, if~$u\ge0$, then~$\partial E$ lies in~$ {\mathcal{U}}_0$
(since in this case~$u$ must vanish in the complement of~$E$).

Of course, when~$u$ is continuous, such set lies in the zero level set of~$u$,
but since we do not have this information a priori, it is useful to consider
explicitly this set, and prove the following result:

\begin{theorem}[Growth from the free boundary]\label{GFB}
Let~$R_o$, $Q>0$. Assume that
\begin{equation}\label{LIPASS}
{\mbox{$\Phi$ is Lipschitz continuous in~$[0,Q]$,
with Lipschitz constant bounded by~$L_Q$.}}
\end{equation}
Assume that~$(u,E)$ is a minimal pair in~$\Omega$,
with~$B_{R_o}\Subset\Omega$,
\begin{equation}\label{jkLPPO:PA}
0\in {\mathcal{U}}_0\end{equation}
and~$u\ge0$ in~$\R^n\setminus\Omega$.
Suppose that~$R\in(0,R_o]$ and
\begin{equation}\label{LIBO}
\PPer_\sigma(E,\Omega)+R^{n-\sigma}
\Per_\sigma(B_1,\R^n) \le Q.\end{equation}
Then, there exists~$C>0$, possibly depending on~$R_o$, $n$
and~$\sigma$ such that, for any~$x\in B_{R/2}$,
$$ u(x) \le  C\,\sqrt{L_Q}\, |x|^{1-\frac{\sigma}{2}}.$$
\end{theorem}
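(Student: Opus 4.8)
Here is how I would attack the estimate.

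\smallskip
\noindent\emph{Reductions.} First I would use that we are in the one-phase regime, so that $u\ge 0$ in $\Omega$ and $u$ coincides with its positive part. A short first-variation argument (inserting the admissible competitor $\big((u-t\varphi)_+,E\big)$ with $\varphi\in C_c^\infty(\Omega)$, $\varphi\ge0$, and letting $t\to0^+$) shows that $u$ is subharmonic in $\Omega$ and harmonic in $\{u>0\}\cap\Omega$. I would work with its upper semicontinuous (subharmonic) representative, for which $r\mapsto m_x(r):=\frac{1}{|B_r(x)|}\int_{B_r(x)}u$ is nondecreasing; in particular $u(x)\le 2^n\, m_0(2|x|)$ for $x\in B_{R/2}$, so by \eqref{jkLPPO:PA} (together with the continuity of minimal solutions, which is either available from the preceding analysis or bootstrapped alongside this estimate) one gets $u(0)=0$. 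Thus the theorem is reduced to the \emph{energy decay estimate}
$$\int_{B_\rho(0)}|\nabla u|^2\le C\,L_Q\,\rho^{\,n-\sigma}\qquad\text{for all }\rho\in(0,R],$$
from which $m_0(\rho)\le C\sqrt{L_Q}\,\rho^{1-\frac{\sigma}{2}}$ follows by a Campanato-type telescoping based on $u(0)=0$ and the monotonicity of the means, and then $u(x)\le C\sqrt{L_Q}|x|^{1-\frac{\sigma}{2}}$ on $B_{R/2}$ by the sub-mean value inequality above.

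\smallskip
\noindent\emph{The comparison step.} For $\rho\le R$ and a ratio $\theta=\theta(n)\in(\tfrac12,1)$ to be fixed, I would compare $(u,E)$ with the pair $(v,F)$, where $F:=E\setminus B_{\theta\rho}$ and $v$ equals $u$ outside $B_\rho$, is identically $0$ on $B_{\theta\rho}$, and equals $u\,\ell$ on the annulus $B_\rho\setminus B_{\theta\rho}$, with $\ell$ the radial Lipschitz cutoff vanishing on $\partial B_{\theta\rho}$ and equal to $1$ on $\partial B_\rho$. Since $B_\rho\Subset\Omega$ (as $B_{R_o}\Subset\Omega$ and $\rho\le R_o$) and $u=0$ on $E^c\cap\Omega$, the pair $(v,F)$ is admissible, $v-u\in H^1_0(\Omega)$ and $F\setminus\Omega=E\setminus\Omega$, so minimality gives ${\mathcal E}_\Omega(u,E)\le{\mathcal E}_\Omega(v,F)$. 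For the perimeter contribution I would use submodularity of $\Per_\sigma(\cdot,\Omega)$ (respectively subadditivity of the De Giorgi perimeter in $\Omega_\Upsilon$ when $\sigma=1$) to get $\PPer_\sigma(F,\Omega)\le\PPer_\sigma(E,\Omega)+\Per_\sigma(B_{\theta\rho},\R^n)$; then \eqref{LIBO} forces both perimeters into $[0,Q]$, and \eqref{LIPASS} with the monotonicity of $\Phi$ yields $\Phi\big(\PPer_\sigma(F,\Omega)\big)-\Phi\big(\PPer_\sigma(E,\Omega)\big)\le L_Q\,\Per_\sigma(B_{\theta\rho},\R^n)\le C L_Q\rho^{\,n-\sigma}$. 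For the Dirichlet contribution, on the annulus $|\nabla v|^2\le(1+\varepsilon)|\nabla u|^2+C_\varepsilon\rho^{-2}u^2$. Inserting these into ${\mathcal E}_\Omega(u,E)\le{\mathcal E}_\Omega(v,F)$ and absorbing the annular Dirichlet integral (``hole filling'') yields, for a suitable choice of $\varepsilon$ and $\theta=\theta(n)$,
$$\int_{B_{\theta\rho}(0)}|\nabla u|^2\le\lambda\int_{B_\rho(0)}|\nabla u|^2+\frac{C}{\rho^2}\int_{B_\rho(0)}u^2+C L_Q\rho^{\,n-\sigma},\qquad \lambda\,\theta^{-(n-\sigma)}<1.$$
(An alternative comparison, filling the set, $F:=E\cup B_\rho$, and replacing $u$ by its harmonic extension $\bar u_\rho$ in $B_\rho$, gives the cleaner inequality $\int_{B_\rho}|\nabla(u-\bar u_\rho)|^2\le C L_Q\rho^{\,n-\sigma}$ with $\bar u_\rho\ge u\ge0$, which can be iterated as in standard Campanato schemes; I would keep both on hand.)

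\smallskip
\noindent\emph{The iteration and the main obstacle.} I would then iterate the displayed inequality along the scales $\rho_k=\theta^k R$. The genuinely delicate point—and what I expect to be the main obstacle—is the lower-order term $\rho^{-2}\int_{B_\rho(0)}u^2$, which a priori is controlled only by the very quantity we are trying to bound. To handle it I would again exploit the subharmonicity of $u$ and $u(0)=0$: by Poincaré's inequality and the Campanato telescoping, $\int_{B_\rho(0)}u^2$ is estimated by $C\rho^2\int_{B_\rho(0)}|\nabla u|^2$ plus a term of order $\rho^n\,m_0(\rho)^2$, and $m_0(\rho)$ is in turn estimated by $\rho^{1-\frac{\sigma}{2}}$ times the running supremum of the rescaled Dirichlet energies (using that $u$ vanishes on a nontrivial portion of each $B_r(0)$—made quantitative, at free boundary points, through density/nondegeneracy estimates for $\partial E$). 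Feeding these bounds back into the recursion, one is led to iterate a suitable \emph{combined} quantity (the Dirichlet integral plus a weighted $L^2$ term); the combined recursion, although its individual constants are not small, closes after dyadic rescaling because of the interplay of the comparison estimate with subharmonicity and the vanishing at $0$, and summing the resulting geometric series of $C L_Q\rho^{\,n-\sigma}$ errors produces the energy decay estimate—hence the theorem. The remaining points are routine bookkeeping: admissibility and the $H^1_0$ membership of the competitors, the sign in the perimeter-difference inequality, and the role of $\Omega_\Upsilon$ when $\sigma=1$.
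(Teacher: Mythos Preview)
Your primary approach—the cutoff competitor $(v,F)$ with $F=E\setminus B_{\theta\rho}$ and hole-filling—has a genuine gap at exactly the point you flag as ``the main obstacle.'' To control the lower-order term $\rho^{-2}\int_{B_\rho}u^2$ you appeal to ``density/nondegeneracy estimates for $\partial E$,'' but in this paper the density estimate from the positive side (Theorem~\ref{DENS2}) is proved \emph{using} Theorem~\ref{GFB}, so invoking it here is circular. Moreover, $0\in\mathcal U_0$ does not by itself guarantee that $u$ vanishes on a set of positive measure in every $B_r$; it only gives a sequence $p_k\to0$ with $u(p_k)\to0$. Without that quantitative vanishing, the Poincar\'e/Campanato bootstrap you sketch for the $L^2$ term does not close.

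Your alternative competitor—$F:=E\cup B_\rho$ with $v=\bar u_\rho$ the harmonic extension in $B_\rho$—is exactly what the paper uses (Lemma~\ref{HT:65:1}), and your ``cleaner inequality'' $\int_{B_\rho}|\nabla(u-\bar u_\rho)|^2\le C\,L_Q\,\rho^{\,n-\sigma}$ is precisely its conclusion. However, the paper does \emph{not} then run a Campanato iteration on the full Dirichlet energy (which would drag along a term depending on $\int_{B_R}|\nabla u|^2$, not just on $L_Q$). Instead it works directly with the solid mean $\psi(r)=r^{-n}\int_{B_r(p)}u$: since the harmonic replacement $u_r$ satisfies $\int_{\partial B_t}\partial_\nu u_r\,d\mathcal H^{n-1}=0$ for every $t\le r$, the harmonic part drops out of $\psi'(r)$ entirely, leaving
\[
\psi'(r)=r^{-(n+1)}\int_{B_r(p)}(y-p)\cdot\big(\nabla u-\nabla u_r\big)\,dy
\le C\,r^{-n/2}\|\nabla(u-u_r)\|_{L^2(B_r(p))}\le C\sqrt{L_Q}\,r^{-\sigma/2}.
\]
Integrating in $r$ gives $\psi(R)-\psi(0)\le C\sqrt{L_Q}\,R^{1-\sigma/2}$ with no iteration and no initial-energy term. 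The assumption $0\in\mathcal U_0$ is used by running this argument at the approximating points $p_k$ (where $\psi_k(0)=u(p_k)\to0$, thanks to the subharmonic representative of Remark~\ref{RE:su}) and then passing to the limit, rather than asserting $u(0)=0$ via a continuity result that is not yet available. Once the mean-value bound is in hand, the pointwise growth follows from the sub-mean-value inequality, as you indicate. I would drop the hole-filling route and follow this direct computation of $\psi'$.
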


We observe that condition~\eqref{LIPASS}
is always satisfied if~$\Phi$ is globally Lipschitz,
but the statement of
Theorem~\ref{GFB} is more general, since it may take
into account a locally Lipschitz~$\Phi$, provided that
the domain is small enough to satisfy~\eqref{LIBO}
(indeed, small domains satisfy this condition for locally
Lipschitz~$\Phi$,
as remarked in the forthcoming Lemma~\ref{mAi86y}).\medskip

We also point out that~\eqref{LIBO} may be equivalently written
\begin{equation}\label{ONW}
\PPer_\sigma(E,\Omega)+\Per_\sigma(B_R,\R^n) \le Q.\end{equation}
One natural way to interpret~\eqref{LIBO} (or~\eqref{ONW})
is that once~$\PPer_\sigma(E,\Omega)$ is strictly less than~$Q$
(i.e. strictly less than the size of the interval in which~$\Phi$
is Lipschitz), then~\eqref{LIBO} (and thus~\eqref{ONW}) holds true
as long as~$R$ is sufficiently small.\medskip

The growth result in Theorem~\ref{GFB} implies, as a byproduct,
an interior H\"older regularity result:

\begin{corollary}\label{COCO}
Let~$Q>0$ and assume that~$\Phi$ is Lipschitz continuous in~$[0,Q]$,
with Lipschitz constant bounded by~$L_Q$.

Assume that~$(u,E)$ is a minimal pair in~$\Omega$,
with~$B_R\Subset\Omega$ and~$u\ge0$ in~$\R^n\setminus\Omega$.

Suppose that~$\PPer_\sigma(E,\Omega)+
R^{n-\sigma} \Per_\sigma(B_1,\R^n) \le Q$ and that~$u\le M$ on~$\partial\Omega$.

Then~$u\in C^{1-\frac{\sigma}{2}}(B_{R/4})$, with
$$ \|u\|_{ C^{1-\frac{\sigma}{2}}(B_{R/4}) }\le
C\,\left(\sqrt{L_Q}+\frac{M}{R^{1-\frac{\sigma}{2}}}\right),$$
for some~$C>0$, possibly depending on~$n$
and~$\sigma$.
\end{corollary}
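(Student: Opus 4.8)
The plan is to derive Corollary~\ref{COCO} from Theorem~\ref{GFB} by combining the growth-from-the-free-boundary estimate with standard interior estimates for subharmonic-type functions. First I would observe that, since $(u,E)$ is a minimal pair and $u\ge0$ in $\R^n\setminus\Omega$, the function $u$ is (weakly) subharmonic in $\{u>0\}\cap\Omega$: competitors of the form $(v,E)$ with $v\ge u$ in $E$ do not change the perimeter term, so $u$ minimizes the Dirichlet energy from above in the positivity set, forcing $\Delta u\ge0$ there in the distributional sense. Consequently $u^+=\max\{u,0\}$ is globally subharmonic in $\Omega$, and the maximum principle gives the a priori bound $0\le u\le M$ in $\Omega$ coming from the boundary datum $u\le M$ on $\partial\Omega$ (here one also uses $u\ge0$ outside $\Omega$, so the relevant comparison is with the constant $M$).

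Next I would upgrade the pointwise growth bound of Theorem~\ref{GFB} to a Hölder seminorm estimate. The point is that for a nonnegative subharmonic function, a one-sided growth bound from its zero set propagates to a genuine Hölder modulus of continuity. Concretely: fix $x_0,x_1\in B_{R/4}$ and set $d:=|x_0-x_1|$. If the ball $B_{2d}(x_0)$ meets $\{u=0\}$ (or, more precisely, meets $\mathcal U_0$), then by Theorem~\ref{GFB} applied at that free boundary point we get $u\le C\sqrt{L_Q}\,(3d)^{1-\sigma/2}$ on $B_d(x_1)\cup\{x_0\}$, hence $|u(x_0)-u(x_1)|\le C\sqrt{L_Q}\,d^{1-\sigma/2}$. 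If instead $B_{2d}(x_0)$ is entirely contained in $\{u>0\}\cap\Omega$, then $u$ is harmonic there, and interior gradient estimates for harmonic functions give $|\nabla u(x_0)|\le \frac{C}{d}\,\sup_{B_{2d}(x_0)}u \le \frac{C}{d}\,C\sqrt{L_Q}\,(3d)^{1-\sigma/2}$ (again invoking Theorem~\ref{GFB} at a free boundary point on $\partial B_{2d}(x_0)\cap\mathcal U_0$, which exists unless $u>0$ on a much larger ball, in which case one falls back on the global bound $u\le M$ and the standard estimate $|\nabla u|\le CM/R$). Integrating this gradient bound along the segment from $x_0$ to $x_1$ yields $|u(x_0)-u(x_1)|\le C\sqrt{L_Q}\,d^{1-\sigma/2}$ in this case as well. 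Combining the two cases with the sup bound $\|u\|_{L^\infty(B_{R/4})}\le \min\{M,\ CR^{1-\sigma/2}\sqrt{L_Q}\}$, and normalising by $R^{1-\sigma/2}$ to account for the boundary term, produces the asserted estimate
$$\|u\|_{C^{1-\frac{\sigma}{2}}(B_{R/4})}\le C\Big(\sqrt{L_Q}+\frac{M}{R^{1-\frac{\sigma}{2}}}\Big).$$

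The role of hypothesis $\PPer_\sigma(E,\Omega)+R^{n-\sigma}\Per_\sigma(B_1,\R^n)\le Q$ is exactly to make Theorem~\ref{GFB} applicable at \emph{every} free boundary point inside $B_R$ with the \emph{same} constant $L_Q$: translating the origin of Theorem~\ref{GFB} to any $x\in\mathcal U_0\cap B_{R/2}$ is legitimate because the condition~\eqref{LIBO} is translation-invariant in the stated form, so the constant $C\sqrt{L_Q}$ does not deteriorate as we move the base point around $B_{R/4}$. I would make sure that when I shrink from $B_{R/2}$ (the region covered by Theorem~\ref{GFB}) to $B_{R/4}$, every auxiliary ball $B_{2d}(x_0)$ used above, with $d\le R/8$, stays inside $B_{R/2}$, so all invocations of Theorem~\ref{GFB} and of interior harmonic estimates are valid.

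The main obstacle I anticipate is the case analysis in the second paragraph: making rigorous the dichotomy ``either a nearby ball touches $\mathcal U_0$, or $u$ is harmonic on a definite ball'' and, in the latter case, quantitatively extracting the gradient bound without circular reasoning. The clean way around this is a Campanato-type argument: show that for every $x_0\in B_{R/4}$ and every $\rho\le R/8$ one has $\sup_{B_\rho(x_0)}u\le C\sqrt{L_Q}\,\rho^{1-\sigma/2}+C\rho^{1-\sigma/2}\,M R^{-(1-\sigma/2)}$, by distinguishing whether $B_\rho(x_0)$ meets $\mathcal U_0$ (use Theorem~\ref{GFB}) or not (then $u$ is harmonic and positive in $B_\rho(x_0)$, so by the mean value property $\sup_{B_{\rho/2}(x_0)}u\le C\,u(x_0)\le C\sup_{B_{2\rho}(x_0)}u$, and one iterates outward until either $\mathcal U_0$ is reached or the ball $B_R$ is filled, where the bound $u\le M$ closes the induction). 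A growth control of this form at all scales and centers is, by a standard lemma, equivalent to the claimed $C^{1-\sigma/2}$ bound, and this route sidesteps any delicate direct estimation of $\nabla u$.
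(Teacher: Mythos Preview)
Your strategy is essentially the paper's: combine Theorem~\ref{GFB} with interior harmonic estimates via a dichotomy on the distance to the zero set. Two points deserve tightening.

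First, the step ``$B_{2d}(x_0)\subset\{u>0\}\Rightarrow u$ is harmonic there'' is not immediate: the available Lemma~\ref{LE:AR} requires $\inf_U u>0$, not merely $u>0$ pointwise, and at this stage $u$ is not yet known to be continuous. The paper fills this gap with Lemma~\ref{LE:AR:BIS}, whose proof \emph{uses} Theorem~\ref{GFB}: if $\inf_{U'}u=0$ on some $U'\Subset U$, one finds a point of $\mathcal U_0$ in $\overline{U'}\subset U$, and the growth bound then forces $u=0$ there, contradicting $u>0$ on $U$. You should record this lemma explicitly rather than assert harmonicity directly.

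Second, your dichotomy is indexed by $d=|x_0-x_1|$, which forces the awkward parenthetical third case (``unless $u>0$ on a much larger ball\ldots''). The paper avoids this by first disposing globally of the case $\{u=0\}\cap B_{3R/10}=\varnothing$ (then $u$ is harmonic in $B_{3R/10}$ and $|\nabla u|\le CM/R$ suffices), and in the remaining case letting $d(x)$ be the distance from $x$ to $\{u=0\}$, so that $d(x)\le 3R/5$ automatically. The two cases are then $|x-y|\ge\tfrac12\max\{d(x),d(y)\}$ (bound $u(x),u(y)$ separately by Theorem~\ref{GFB}) and $|x-y|<\tfrac12 d(x)$ (harmonic in $B_{d(x)}(x)$, and by definition there is a zero on $\partial B_{d(x)}(x)$, so Theorem~\ref{GFB} bounds $\sup_{B_{d(x)}(x)}u$ by $C\sqrt{L_Q}\,d(x)^{1-\sigma/2}$, feeding the gradient estimate). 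This is exactly what your Campanato alternative is reaching for, but stated as a clean two-case argument; your iteration ``outward until $\mathcal U_0$ is reached or $B_R$ is filled'' is unnecessary once $d(x)$ is the organising quantity.
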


When $\Phi$ is linear, the result in Corollary~\ref{COCO}
was obtained in Theorem~3.1 of~\cite{salsa} if~$\sigma=1$
and in Theorem~1.1 of~\cite{CSV} if~$\sigma\in(0,1)$.
Differently than in our framework, in~\cites{salsa, CSV}
scaling arguments are available, since scaling is compatible
with the minimization procedure.\medskip

Now we investigate the structure of the free boundary
points in terms of local densities of the phases.
Indeed, we show that
the free boundary points always have uniform density from outside~$E$,
according to the following result:

\begin{theorem}[Density estimate from the null side]\label{DENS}
Assume that~$(u,E)$ is a minimal pair in~$\Omega$,
with~$B_R\subseteq\Omega$, $0\in\partial E$ 
and~$u\ge0$ in~$\R^n\setminus\Omega$.
Set
\begin{equation}\label{12sKJ:9yuhh:P}
P=P(E,\Omega,R):=
\PPer_\sigma (E,\Omega) + R^{n-\sigma} \Per_\sigma (B_{1},\R^n)\end{equation}
and assume that
\begin{equation}\label{12sKJ:9yuhh}
{\mbox{$\Phi$ is strictly increasing in the interval $(0,P)$.}}
\end{equation}
Then there exists~$\delta>0$, possibly depending on~$n$
and~$\sigma$ such that, for any~$r\in (0,R/2)$,
$$ |B_r \setminus E|\ge\delta r^n.$$
\end{theorem}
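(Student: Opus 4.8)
The plan is to run a classical density-estimate argument of De Giorgi type, comparing the minimal pair~$(u,E)$ with the competitor obtained by ``filling in'' the ball~$B_r$ on the null side: namely set~$F:=E\setminus B_r$ and let~$v$ be the function that agrees with~$u$ outside~$B_r$ and is replaced inside~$B_r$ by a suitable nonpositive extension (for instance~$v:=\min\{u,0\}$ in~$B_r$, so that~$v\le 0$ in~$B_r\subseteq F^c$, making~$(v,F)$ admissible, with~$v-u\in H^1_0(\Omega)$ since~$u\ge 0$ on~$\R^n\setminus\Omega$ and we may take~$r<R/2$ so that~$B_r\Subset\Omega$). The minimality inequality~${\mathcal E}_\Omega(u,E)\le {\mathcal E}_\Omega(v,F)$ then reads
\begin{equation}\label{DENS:pf1}
\int_{B_r}|\nabla u|^2\,dx + \Phi\big(\PPer_\sigma(E,\Omega)\big)
\le \int_{B_r}|\nabla v|^2\,dx + \Phi\big(\PPer_\sigma(F,\Omega)\big).
\end{equation}
Since~$v=u^-$ on~$B_r$ and~$v=u$ elsewhere, the Dirichlet terms contribute~$\int_{B_r}|\nabla u^+|^2$ on the left-over, which we can afford to discard as it is nonnegative; the real content is that removing~$B_r$ from~$E$ decreases the perimeter term, and~$\Phi$ being strictly increasing on~$(0,P)$ turns this into a strict gain that must be balanced. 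More precisely, using~\eqref{12sKJ:9yuhh} together with the (standard, see~\cite{CRS,CSV}) isoperimetric-type inequality
$$
\PPer_\sigma(F,\Omega)\le \PPer_\sigma(E,\Omega)-c\,|B_r\setminus E|^{\frac{n-\sigma}{n}}+C\,\Per_\sigma(B_r,\R^n)
$$
(the second term measuring the ``bad'' part of~$\partial B_r$ one adds), and the quantitative lower bound for~$\Phi$ implicit in strict monotonicity on~$(0,P)$, one extracts
$$
\omega\big(|B_r\setminus E|^{\frac{n-\sigma}{n}}\big)\le \int_{B_r}|\nabla u|^2\,dx + C\,r^{n-\sigma},
$$
for a modulus~$\omega$ coming from~$\Phi$; here one crucially uses that the relevant perimeter values stay inside~$(0,P)$, which is exactly what the definition of~$P$ in~\eqref{12sKJ:9yuhh:P} and the bound~$\PPer_\sigma(F,\Omega)\le P$ guarantee.

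Next I would absorb the Dirichlet term. Set~$a(r):=|B_r\setminus E|$. By the growth estimate from the free boundary (Theorem~\ref{GFB}, applicable since~$0\in\partial E\subseteq{\mathcal U}_0$, $u\ge 0$ in~$\R^n\setminus\Omega$, and~\eqref{LIBO} holds with~$Q=P$ by~\eqref{12sKJ:9yuhh:P}), we have~$u(x)\le C|x|^{1-\sigma/2}$ in~$B_{R/2}$, hence a Caccioppoli-type inequality on~$B_r$ gives~$\int_{B_r}|\nabla u|^2\le C r^{n-\sigma}$, of the same order as the other error term. Alternatively, and more robustly, one estimates~$\int_{B_r}|\nabla u|^2$ by comparison: replacing~$u$ inside~$B_r$ by its harmonic competitor and using the quadratic growth gives the same power. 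Either way, \eqref{DENS:pf1} collapses to a closed differential/iteration inequality purely for~$a(r)$:
$$
a(r)^{\frac{n-\sigma}{n}}\le C\,r^{n-\sigma}\quad\text{i.e.}\quad a(r)\le C\,r^{n},
$$
provided~$a(r)>0$ for all~$r>0$ — and this last fact holds because~$0\in\partial E$ means~$|B_r\setminus E|>0$ for every~$r>0$. Running the standard De Giorgi iteration on the resulting inequality (comparing~$a(r)$ with~$\frac{d}{dr}a(r)=\mathcal{H}^{n-1}(\partial B_r\setminus E)$ and the relative isoperimetric inequality in~$B_r$) upgrades the estimate to the clean lower bound~$a(r)\ge\delta r^n$.

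The main obstacle I anticipate is the handling of the nonlinearity~$\Phi$: unlike the linear case in~\cite{salsa,CSV}, here one cannot simply cancel equal perimeter increments, and the gain from~$\Phi\big(\PPer_\sigma(E,\Omega)\big)-\Phi\big(\PPer_\sigma(F,\Omega)\big)$ is not linear in the perimeter defect. This is precisely why the hypothesis is merely \emph{strict} monotonicity on the interval~$(0,P)$ rather than a uniform derivative bound: one must argue that, since~$\PPer_\sigma(F,\Omega)$ and~$\PPer_\sigma(E,\Omega)$ both lie in the \emph{fixed} interval~$[0,P]$ on which~$\Phi$ is strictly increasing, there is a modulus of strict monotonicity that does not degenerate, and then show this modulus still yields the power~$r^n$ after the iteration. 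Keeping careful track that all the perimeter quantities entering~$\Phi$ remain confined to~$(0,P)$ — so that the qualitative strict monotonicity is actually usable — is the delicate bookkeeping point, and the reason the quantity~$P$ in~\eqref{12sKJ:9yuhh:P} is defined with the extra term~$R^{n-\sigma}\Per_\sigma(B_1,\R^n)$, which bounds the perimeter one may add along~$\partial B_r$ for all~$r<R$.
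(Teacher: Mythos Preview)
Your proposal has a fundamental misdirection and several gaps.

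\textbf{Wrong competitor.} You set~$F:=E\setminus B_r$, i.e.\ you \emph{remove}~$B_r$ from~$E$. This is the competitor one would use to bound~$|B_r\cap E|$ from below (the positive-side density, Theorem~\ref{DENS2}), not~$|B_r\setminus E|$. The paper does the opposite: it keeps~$u$ unchanged and takes the competitor~$(u,\,E\cup A)$ with~$A:=B_r\setminus E$, i.e.\ it \emph{adds} the missing piece of~$B_r$ to~$E$. Since~$u\ge0$ a.e.\ in~$\R^n$ (by Lemma~\ref{MAX:PLE}) and~$(E\cup A)^c\subseteq E^c$, the pair~$(u,E\cup A)$ is admissible with no change to the function. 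The Dirichlet terms then cancel \emph{exactly}, and minimality reduces to
\[
\Phi\big(\PPer_\sigma(E,\Omega)\big)\le \Phi\big(\PPer_\sigma(E\cup A,\Omega)\big).
\]
Strict monotonicity of~$\Phi$ on~$(0,P)$ is used only once, qualitatively, to invert this and obtain~$\Per_\sigma(E,\Omega)\le\Per_\sigma(E\cup A,\Omega)$. From there the argument is purely geometric: the isoperimetric inequality gives~$V(r)^{(n-\sigma)/n}\le C\,\Per_\sigma(A,\R^n)$, one shows~$\Per_\sigma(A,\R^n)$ is controlled by~$a(r)=\mathcal{H}^{n-1}((\partial B_r)\setminus E)$ (or its fractional analogue), and a De Giorgi iteration in~$r$ finishes. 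No modulus of~$\Phi$, no growth estimate, no Dirichlet-energy bookkeeping is needed.

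\textbf{Your competitor function is not admissible.} Since~$u\ge0$ a.e., your~$v:=\min\{u,0\}$ in~$B_r$ equals~$0$ there; gluing this to~$u$ outside~$B_r$ produces a jump across~$\partial B_r$ wherever~$u>0$, so~$v\notin H^1(\Omega)$ and~$(v,F)$ is not an admissible competitor.

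\textbf{Unjustified appeal to Theorem~\ref{GFB}.} You invoke Theorem~\ref{GFB} to control~$\int_{B_r}|\nabla u|^2$, but that theorem assumes~$\Phi$ is Lipschitz on~$[0,Q]$ (hypothesis~\eqref{LIPASS}). Theorem~\ref{DENS} assumes only strict monotonicity of~$\Phi$ on~$(0,P)$, which is strictly weaker; you cannot import the growth bound here. This is not a technicality---it is precisely why the paper's proof is arranged so that the Dirichlet energy drops out completely.

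\textbf{The claimed perimeter inequality is wrong.} The displayed inequality
\[
\PPer_\sigma(F,\Omega)\le \PPer_\sigma(E,\Omega)-c\,|B_r\setminus E|^{\frac{n-\sigma}{n}}+C\,\Per_\sigma(B_r,\R^n)
\]
has no reason to hold: with~$F=E\setminus B_r$, the perimeter defect involves~$B_r\cap E$, not~$B_r\setminus E$, and there is no mechanism producing a negative isoperimetric term of the stated form.

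In short, the correct competitor is~$(u,\,E\cup B_r)$, which eliminates the Dirichlet energy and the nonlinearity of~$\Phi$ in one stroke; the remainder is the standard measure-theoretic iteration on~$V(r)=|B_r\setminus E|$ and~$a(r)=V'(r)$.
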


We point out that condition~\eqref{12sKJ:9yuhh}
is always satisfied if~$\Phi$ is strictly increasing in the
whole of~$[0,+\infty)$, but Theorem~\ref{DENS}
is also general enough to take into consideration
the case in which~$\Phi$ is strictly increasing
only in a subinterval, provided that the energy
domain is sufficiently small
to make the perimeter values to lie in the 
strict monotonicity interval of~$\Phi$
(as a matter of fact, the perimeter contributions in small 
domains is small, as we will point out in the forthcoming Lemma~\ref{mAi86y}).\medskip

The investigation of the density properties
of the free boundary is also completed by
the following counterpart of Theorem~\ref{DENS},
which proves the positive density of the set~$E$:

\begin{theorem}[Density estimate from the positive side]\label{DENS2}
Let~$Q>0$ and assume that
\begin{equation}\label{JH:2345aua123}
{\mbox{$\Phi$ is Lipschitz continuous in~$[0,Q]$,
with Lipschitz constant bounded by~$L_Q$.}}
\end{equation}
and that
\begin{equation}\label{an2383:li:12}
{\mbox{$\Phi'\ge c_o$ a.e. in~$[0,Q]$,}}
\end{equation}
for some~$c_o>0$.

Assume that~$(u,E)$ is a minimal pair in~$\Omega$,
with~$B_R\Subset\Omega$,
$0\in\partial E$
and~$u\ge0$ in~$\R^n\setminus\Omega$.
Suppose that
\begin{equation}\label{LIBO:LALL}
\PPer_\sigma(E,\Omega)+R^{n-\sigma}
\Per_\sigma(B_1,\R^n) \le Q.\end{equation}
Then there exists~$\delta_*>0$, possibly depending on~$n$,
$\sigma$, $c_o$ and~$L_Q$, such that, for any~$r\in (0,R/2)$,
$$ |B_r \cap E|\ge\delta_*\, r^n.$$
More explicitly, such~$\delta_*$ can be taken to be of the form
\begin{equation}\label{QaQ45dfg678}
\delta_*:= \delta_o\,\min\left\{ 1,\,\left(\frac{c_o}{L_Q}\right)^{\frac{n}{\sigma}}\right\},\end{equation}
for some~$\delta_o>0$, possibly depending on~$n$
and~$\sigma$.
\end{theorem}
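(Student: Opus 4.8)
The plan is to combine the upper growth bound of Theorem~\ref{GFB} with a matching \emph{nondegeneracy} (lower growth) estimate for~$u$, through a dichotomy: at every free boundary point, either~$E$ already obeys a clean perimeter‑type density estimate, or~$u$ is nondegenerate and one can fit a full ball of~$\{u>0\}\subseteq E$ near that point.

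First I would reduce to the one‑phase situation. Since~$u\ge0$ in~$\R^n\setminus\Omega$, the minimality of~$(u,E)$ forces~$u\ge0$ a.e.\ in~$\R^n$: replacing~$u$ by~$u^+$ leaves~$E$ unchanged, does not increase the Dirichlet energy, and keeps the pair admissible with the same exterior datum, hence~$\nabla u^-=0$ a.e., and~$u^-\equiv0$ because~$u^-$ vanishes outside~$\Omega$. Thus~$u=0$ a.e.\ in~$E^c$, the set~$\{u>0\}$ is open and contained in~$E$ up to null sets, $u$ is harmonic in~$\{u>0\}$ (one may perturb~$u$ there keeping~$E$ fixed), and~$0\in\partial E\subseteq{\mathcal{U}}_0$; moreover~$(\partial\{u>0\})\cap\Omega\subseteq{\mathcal{U}}_0$ as well. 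Hence Theorem~\ref{GFB}, applied at~$0$ and, after a translation, at any~$w\in(\partial\{u>0\})\cap\Omega$ near~$0$, yields~$u(x)\le C\sqrt{L_Q}\,|x-w|^{1-\frac{\sigma}{2}}$ for~$x$ near~$w$, with~$C=C(n,\sigma)$. I would also record the Caccioppoli inequality~$\int_{B_\rho}|\nabla u|^2\le C\rho^{-2}\int_{B_{2\rho}}u^2$, obtained by comparing~$u$ with~$u(1-\eta)$ for a cutoff~$\eta$ equal to~$1$ on~$B_\rho$ and supported in~$B_{2\rho}$ (an admissible competitor with the \emph{same} set~$E$).

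Next comes the dichotomy. Fix~$\rho$ with~$B_{2\rho}\Subset\Omega$ and set~$\lambda:=c'\sqrt{c_o}$ with~$c'=c'(n,\sigma)$ to be chosen. If~$\sup_{B_{2\rho}}u\le\lambda(2\rho)^{1-\frac{\sigma}{2}}$, then by Caccioppoli and the growth bound the competitor obtained by replacing~$u$ with~$u\phi$, for a cutoff~$\phi$ vanishing on~$B_{\rho/2}$ and equal to~$1$ off~$B_\rho$, increases the Dirichlet energy by at most~$C\lambda^2\rho^{n-\sigma}=Cc'^2c_o\,\rho^{n-\sigma}$; testing the minimality of~$(u,E)$ against~$(u\phi,F)$ for \emph{any}~$F$ with~$F\triangle E\Subset B_{\rho/2}$ (for which~$(u\phi,F)$ is admissible with the right exterior datum, since~$u\phi=0$ on~$B_{\rho/2}$ and~$u=0$ a.e.\ on~$E^c$) and using~$\Phi'\ge c_o$, one gets~$\PPer_\sigma(F,\Omega)\ge\PPer_\sigma(E,\Omega)-Cc'^2\rho^{n-\sigma}$. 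For~$c'$ small enough (depending only on~$n,\sigma$) this makes~$E$ an almost minimizer of the (classical or fractional) perimeter in~$B_{\rho/2}$ with deficit below the relevant threshold, so the classical density estimates (De~Giorgi's for~$\sigma=1$, see e.g.~\cite{maggi}, and~\cite{CRS} for~$\sigma\in(0,1)$) give~$|B_{\rho/8}\cap E|\ge c_0(n,\sigma)\,\rho^n$. In the complementary case,~$\sup_{B_{2\rho}}u>\lambda(2\rho)^{1-\frac{\sigma}{2}}$ — which is exactly the sought nondegeneracy — I would pick~$z\in B_{2\rho}$ with~$u(z)\ge c\sqrt{c_o}\,\rho^{1-\frac{\sigma}{2}}$ and a nearest point~$w$ of~$(\partial\{u>0\})\cap\Omega$ to~$z$, with~$d:=|z-w|$; since~$u$ is positive and harmonic on the open set~$\{u>0\}$, $B_d(z)\subseteq\{u>0\}\subseteq E$ up to null sets, while the translated growth bound gives~$c\sqrt{c_o}\,\rho^{1-\frac{\sigma}{2}}\le u(z)\le C\sqrt{L_Q}\,d^{1-\frac{\sigma}{2}}$, whence~$d\ge\eta\rho$ with~$\eta:=\min\{1/2,\,(c''c_o/L_Q)^{1/(2-\sigma)}\}$ for a suitable~$c''=c''(n,\sigma)$, and therefore~$|B_{3\rho}\cap E|\ge\omega_n(\eta\rho)^n$. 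Collecting the two cases and re‑absorbing the radii (a lower density bound at one scale~$\lesssim R$ propagates, with a worse constant, to all radii in~$(0,R/2)$), I arrive at~$|B_r\cap E|\ge\delta_*r^n$ for every~$r\in(0,R/2)$, with~$\delta_*$ of the form~\eqref{QaQ45dfg678}.

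The hard part will be the effectiveness of the ``almost minimality'' step: one must ensure that the additive perimeter deficit~$\sim(\lambda^2/c_o)\,\rho^{n-\sigma}$ is genuinely negligible at the scale where the density estimate for almost minimizers is invoked, and — in the nonlocal range~$\sigma\in(0,1)$ — keep careful track of the long‑range tail contributions to~$\PPer_\sigma$ produced by modifying~$E$ inside~$B_{\rho/2}$; it is also here that the precise dependence of~$\delta_*$ on the ratio~$c_o/L_Q$ in~\eqref{QaQ45dfg678} gets pinned down. The remaining ingredients (the growth estimate of Theorem~\ref{GFB}, the Caccioppoli inequality, and the classical density estimates for perimeter almost minimizers) should be routine.
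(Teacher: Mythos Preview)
Your dichotomy is appealing, but the ``small'' branch has a real gap. The almost--minimality you obtain there reads
\[
\PPer_\sigma(E,\Omega)\le \PPer_\sigma(F,\Omega)+Cc'^2\,\rho^{n-\sigma}
\qquad\text{for every }F\text{ with }F\triangle E\Subset B_{\rho/2},
\]
i.e.\ an \emph{additive} deficit that is fixed at the scale~$\rho$ and does not shrink when you pass to competitors supported in smaller balls. This is \emph{not} the framework of De~Giorgi's density estimates (nor of~\cite{CRS}), which require either exact minimality or a $(\Lambda,r_0)$--type deficit that is subcritical at small scales. In fact no density bound of the form $|B_{\rho/8}\cap E|\ge c_0(n,\sigma)\,\rho^n$ can hold in your setting: for any fixed~$\epsilon_0>0$, the tiny ball $E=B_\epsilon(p)$ with $|p|=\epsilon$ and $\epsilon\ll 1$ satisfies $0\in\partial E$, is an $\epsilon_0$--almost minimizer of $\Per_\sigma$ in~$B_1$ as soon as $C\epsilon^{n-\sigma}\le\epsilon_0$, yet $|B_{1/4}\cap E|\le C\epsilon^n\to0$. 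So the ``threshold'' you invoke does not exist, and the De~Giorgi iteration indeed breaks down: with a constant additive term the recursion $w_{k+1}^{(n-\sigma)/n}\le C^k(w_k+\epsilon_0)$ cannot force $w_k\to0$.

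The paper avoids this by choosing a different competitor. Instead of truncating~$u$ by a cutoff (which costs~$\sim\rho^{n-\sigma}$ in Dirichlet energy), it compares $(u,E)$ with $(\tilde v,E\setminus A)$, where $A=B_r\cap E$ and $\tilde v$ is the harmonic replacement of~$u$ that is forced to vanish on~$E^c\cup A$; a harmonic--replacement lemma then bounds the Dirichlet cost by $C\,r_o^{-2}\,|A|\,\|u\|_{L^\infty(B_{r_o})}^2$, and Theorem~\ref{GFB} turns this into $C\,L_Q\,r_o^{-\sigma}\,|A|$. The crucial point is that this deficit is \emph{proportional to $|A|=V(r)$}, so after the isoperimetric inequality one gets
\[
\big(V(r)\big)^{\frac{n-\sigma}{n}}\le C\Big(a(r)+c_o^{-1}L_Q\,r_o^{-\sigma}\,V(r)\Big),
\]
and under the contradiction hypothesis $V(r_o)\le\delta_* r_o^n$ the extra term is $\le C\,\delta_*^{\sigma/n}\,(L_Q/c_o)\,(V(r))^{(n-\sigma)/n}$, which is absorbed on the left once $\delta_*\lesssim (c_o/L_Q)^{n/\sigma}$ --- this is exactly where the form~\eqref{QaQ45dfg678} comes from. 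Your nondegenerate branch would instead produce an exponent $n/(2-\sigma)$, which does not match~\eqref{QaQ45dfg678} (though it would be sharper if the argument went through). In short: the missing idea is to make the energy cost of the competitor proportional to the volume you are trying to control, not to the ambient scale.
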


We remark that the results obtained in this paper are new
even in the local case in which~$\sigma=1$. 
Also, we think it is an interesting point of this
paper that all the cases~$\sigma\in(0,1)$ and~$\sigma=1$
are treated simultaneously in a unified fashion.
The methods presented are also general enough to treat
the case~$\sigma=0$ which would correspond to a volume term
(see e.g.~\cites{Maz, DFPV}).
This case is in fact richer of results
and so we will discuss it in detail in a forthcoming paper.\medskip

The rest
of the paper is organized as follows. In Section~\ref{sec:pre} 
we show some preliminary properties of the minimal pair, 
such as existence, harmonicity and subarmonicity properties, 
and comparison principle. We also prove a ``locality'' property for the (either classical or
fractional) perimeter 
and provide a uniform bound on the (classical or
fractional) perimeter of the set in the minimal pair. 

Section~\ref{sec:ex} is devoted to the construction of the counterexample 
in Theorem~\ref{EXAMPLE}. In Section~\ref{sec:equa} we provide the free boundary equation 
and prove Theorem~\ref{FBW}. 

Then we deal with the regularity of the function~$u$ in the minimal pair 
in the one-phase case, and we prove Theorem~\ref{GFB} and Corollary~\ref{COCO} 
in Sections~\ref{sec:growth} and~\ref{sec:coro}, respectively. 
Finally, Sections~\ref{sec:dens} and~\ref{sec:dens2} 
are devoted to the proofs of the density estimates from both sides 
provided by Theorems~\ref{DENS} and~\ref{DENS2}, respectively. 

\medskip 

Since we hope that the paper may be of interest for different
communities (such as scientists working in
free boundary problems, variational methods, partial
differential equations, geometric measure theory
and fractional problems), we made an effort to
give the details of the arguments involved in the
proofs in a clear and widely accessible way.

\section{Preliminaries}\label{sec:pre}

We start with a useful observation about the positivity
sets of sequences of admissible pairs:

\begin{lemma} \label{ADM:L}
Let~$(u_j,E_j)$ be a sequence of admissible pairs.
Assume that~$u_j\to u$ a.e. in~$\R^n$ and~$\chi_{E_j}\to
\chi_E$ a.e. in~$\R^n$, for some~$u$ and~$E$. Then
$u\ge0$ a.e. in~$E$ and~$u\le0$ a.e. in~$E^c$.\end{lemma}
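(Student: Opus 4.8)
The plan is to argue pointwise almost everywhere, exploiting the fact that both $u_j\to u$ and $\chi_{E_j}\to\chi_E$ hold off a common null set. Fix a point $x\in\R^n$ outside the exceptional null set where one of the two convergences fails; then $u_j(x)\to u(x)$ and $\chi_{E_j}(x)\to\chi_E(x)$. Since $\chi_{E_j}(x)$ takes only the values $0$ and $1$ and converges, it must be eventually constant, equal to $\chi_E(x)$; that is, there is $j_0=j_0(x)$ such that for all $j\ge j_0$ we have $x\in E_j$ if $x\in E$, and $x\in E_j^c$ if $x\in E^c$.

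First I would treat the case $x\in E$. For all $j\ge j_0$ we then have $x\in E_j$, so by admissibility of the pair $(u_j,E_j)$, the inequality $u_j(x)\ge0$ holds for a.e.\ such $x$; intersecting over $j\ge j_0$ (a countable family) still removes only a null set, so for a.e.\ $x\in E$ we get $u_j(x)\ge0$ for all large $j$, and passing to the limit gives $u(x)=\lim_j u_j(x)\ge0$. Hence $u\ge0$ a.e.\ in $E$. The case $x\in E^c$ is symmetric: for a.e.\ such $x$ we have $x\in E_j^c$ for all large $j$, so $u_j(x)\le0$ for all large $j$, and letting $j\to+\infty$ yields $u(x)\le0$; hence $u\le0$ a.e.\ in $E^c$.

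The one technical subtlety to handle carefully — and the only place the argument is more than a triviality — is the bookkeeping of the null sets: the threshold index $j_0$ depends on $x$, and admissibility of each $(u_j,E_j)$ only gives $u_j\ge0$ outside a null set $N_j\subset E_j$ (and $u_j\le0$ outside a null set $N_j'\subset E_j^c$). Since a countable union $\bigcup_j (N_j\cup N_j')$ together with the exceptional set for the two a.e.\ convergences is still null, I would simply discard all of it at the outset and run the pointwise argument on the complement. This makes the ``intersecting over $j$'' step legitimate despite $j_0$ varying with $x$. No further obstacle is expected; the conclusion then follows directly, so that $(u,E)$ satisfies the sign condition of an admissible pair.
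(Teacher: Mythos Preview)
Your proof is correct and follows essentially the same pointwise argument as the paper: both discard a null set, use that the $\{0,1\}$-valued sequence $\chi_{E_j}(x)$ is eventually constant and equal to $\chi_E(x)$, invoke admissibility to get the sign of $u_j(x)$ for large $j$, and pass to the limit. Your treatment is in fact slightly more explicit than the paper's in that you spell out the need to remove the countable union of null sets $N_j$, $N_j'$ coming from the a.e.\ sign conditions on each $(u_j,E_j)$; the paper leaves this implicit.
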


\begin{proof} We show that $u\ge0$ a.e. in~$E$ (the other claim
being analogous). For this, we write~$\R^n=X\cup Z$,
with~$|Z|=0$ and such that for any~$x\in X$ we have that
$$ \lim_{j\to+\infty} u_j(x)=u(x) \;{\mbox{ and }}\;
\lim_{j\to+\infty}\chi_{E_j}(x) =\chi_E(x).$$
Let now~$x\in E\cap X$. Then
$$ \lim_{j\to+\infty}\chi_{E_j}(x) =\chi_E(x)=1$$
and so there exists~$j_x\in\N$ such that~$\chi_{E_j}(x)\ge 1/2$
for any~$j\ge j_x$. Since the image of a characteristic 
function lies in~$\{0,1\}$, this implies that~$\chi_{E_j}(x)=1$
for any~$j\ge j_x$, and therefore~$u_j(x)\ge0$ for any~$j\ge j_x$.
Taking the limit, we obtain that~$u(x)\ge0$.
Since this is valid for any~$x\in E\cap X$
and~$E\cap X^c\subseteq Z$, which has null measure, we have obtained
the desired result.
\end{proof}

Now we recall a useful auxiliary identity for the (classical or fractional)
perimeter:

\begin{lemma}[``Clean cut'' Lemma]\label{HJ:LEM1}
Let~$\Omega'\Subset\Omega$.
Assume that~$\Per_\sigma(E,\Omega) <+\infty$
and~$\Per_\sigma(F,\Omega)<+\infty$.
Suppose also that
\begin{equation}\label{ssJK:1qs}
E\setminus\overline{\Omega'}=F\setminus\overline{\Omega'}.\end{equation} Then
\begin{equation}\label{ssJK:1qs:BIS}
\Per_\sigma(E,\Omega) -\Per_\sigma(F,\Omega) =
\Per_\sigma(E,\overline{\Omega'})-\Per_\sigma(F,\overline{\Omega'}).
\end{equation}
If in addition~$\PPer_\sigma(E,\Omega) <+\infty$
and~$\PPer_\sigma(F,\Omega)<+\infty$, then
\begin{equation}\label{ssJK:1qs:BIS:2}
\PPer_\sigma(E,\Omega) -\PPer_\sigma(F,\Omega) =
\Per_\sigma(E,\overline{\Omega'})-\Per_\sigma(F,\overline{\Omega'}).
\end{equation}
\end{lemma}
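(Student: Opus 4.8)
The plan is to treat the fractional case $\sigma\in(0,1)$ and the classical case $\sigma=1$ separately. In both, the idea is that the portion of the $\sigma$‑perimeter of the set in $\Omega$ which is \emph{not} already recorded by $\Per_\sigma(\,\cdot\,,\overline{\Omega'})$ depends only on the restriction of the set to $\overline{\Omega'}^c$, and hence, by~\eqref{ssJK:1qs}, is the same quantity for $E$ and for $F$; subtracting then gives~\eqref{ssJK:1qs:BIS}.

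Consider first $\sigma\in(0,1)$, and set $D:=\overline{\Omega'}$ and $G:=\Omega\setminus D$. Since $\Omega'\Subset\Omega$, the set $D$ is compact and contained in the open set $\Omega$, so $\Omega=D\cup G$ and $D^c=G\cup\Omega^c$ are disjoint decompositions, with $G$ open. Inserting these into~\eqref{9g12:LA} and using that $L$ is symmetric and additive in each argument, one expands both $\Per_\sigma(E,\Omega)$ and $\Per_\sigma(E,\overline{\Omega'})$ into sums of $L$–interactions between the six pairwise disjoint sets $E\cap D$, $E\cap G$, $E\cap\Omega^c$, $E^c\cap D$, $E^c\cap G$, $E^c\cap\Omega^c$; cancelling the common terms and using once the symmetry $L(E\cap G,E^c\cap D)=L(E^c\cap D,E\cap G)$ leaves
\[
\Per_\sigma(E,\Omega)-\Per_\sigma(E,\overline{\Omega'})
=L(E\cap G,E^c\cap G)+L(E\cap G,E^c\cap\Omega^c)+L(E^c\cap G,E\cap\Omega^c).
\]
Each summand on the right depends only on $E\cap G$ and $E\cap\Omega^c$, i.e.\ only on $E\cap\overline{\Omega'}^c$, which by~\eqref{ssJK:1qs} equals $F\cap\overline{\Omega'}^c$; hence the right‑hand side is unchanged upon replacing $E$ by $F$, and~\eqref{ssJK:1qs:BIS} follows by subtraction. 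All the quantities involved are finite, since each of the (three) terms of $\Per_\sigma(E,\overline{\Omega'})$ is dominated, after at most one use of the symmetry of $L$, by one of the two summands of $\Per_\sigma(E,\Omega)$, so that $\Per_\sigma(E,\overline{\Omega'})\le 2\,\Per_\sigma(E,\Omega)<+\infty$, and likewise for $F$.

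For $\sigma=1$, note that $\chi_E=\chi_F$ a.e.\ on the open set $\R^n\setminus\overline{\Omega'}$, so $D\chi_E=D\chi_F$ as distributions, hence as measures, there; in particular the total variations $|D\chi_E|$ and $|D\chi_F|$ agree on Borel subsets of $\Omega\setminus\overline{\Omega'}$. Writing $\Omega$ as the disjoint union of the Borel sets $\overline{\Omega'}$ and $\Omega\setminus\overline{\Omega'}$ and using additivity of these finite measures gives $\Per(E,\Omega)-\Per(E,\overline{\Omega'})=|D\chi_E|(\Omega\setminus\overline{\Omega'})=|D\chi_F|(\Omega\setminus\overline{\Omega'})=\Per(F,\Omega)-\Per(F,\overline{\Omega'})$, which is~\eqref{ssJK:1qs:BIS}; here $\Per(\,\cdot\,,\overline{\Omega'})$ is understood as the total variation of the closed set $\overline{\Omega'}$, which by outer regularity of the Radon measure coincides with the usual perimeter relative to $\overline{\Omega'}$.

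Finally,~\eqref{ssJK:1qs:BIS:2} is immediate when $\sigma\in(0,1)$, since then $\PPer_\sigma=\Per_\sigma$ by~\eqref{PPP:D}; when $\sigma=1$ it follows by applying the already established~\eqref{ssJK:1qs:BIS} with $\Omega$ replaced by $\Omega_\Upsilon$, which is legitimate because $\Omega'\Subset\Omega\subseteq\Omega_\Upsilon$ forces $\Omega'\Subset\Omega_\Upsilon$, because~\eqref{ssJK:1qs} is unaffected by enlarging the ambient domain, and because the hypothesis $\PPer_\sigma(E,\Omega),\PPer_\sigma(F,\Omega)<+\infty$ is precisely $\Per(E,\Omega_\Upsilon),\Per(F,\Omega_\Upsilon)<+\infty$. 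I expect the only real effort to be the term‑by‑term accounting in the fractional case; the point requiring care there is to keep straight which pieces of $E^c$ and of $\overline{\Omega'}^c=G\cup\Omega^c$ enter each interaction, so that the surviving ``exterior'' terms genuinely involve only $E\cap\overline{\Omega'}^c$.
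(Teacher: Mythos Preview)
Your proof is correct and follows essentially the same approach as the paper's own argument: for $\sigma=1$ both you and the paper write the perimeter via the Gauss--Green measure, split $\Omega$ into $\overline{\Omega'}$ and the open complement $U=\Omega\setminus\overline{\Omega'}$, and use locality on $U$ (the paper cites Proposition~3.38(c) in~\cite{AFP} where you argue it inline via $D\chi_E=D\chi_F$ on $U$); for $\sigma\in(0,1)$ both compute $\Per_\sigma(E,\Omega)-\Per_\sigma(E,\overline{\Omega'})$ directly from~\eqref{9g12:LA} and check that the surviving interaction terms involve only $E\cap\overline{\Omega'}^c$ (your three terms are just the paper's $L(E\cap U,E^c\setminus\overline{\Omega'})+L(E^c\cap U,E\setminus\Omega)$ with the first summand split over $G$ and $\Omega^c$); and for~\eqref{ssJK:1qs:BIS:2} both reduce to~\eqref{ssJK:1qs:BIS} with $\Omega$ replaced by $\Omega_\Upsilon$ when $\sigma=1$. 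Your added finiteness remark $\Per_\sigma(E,\overline{\Omega'})\le 2\,\Per_\sigma(E,\Omega)$ is a welcome clarification that the paper leaves implicit.
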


\begin{proof}
For completeness, we distinguish the cases~$\sigma=1$ and~$\sigma\in(0,1)$.
If~$\sigma=1$, we write the perimeter of~$E$ in term of the
Gauss-Green measure~$\mu_E$ (see Remark 12.2 in~\cite{maggi}), namely
$$ \Per (E,\Omega)=|\mu_E| (\Omega).$$
So we define
\begin{equation}\label{U DEF L}
U:=\Omega\setminus\overline{\Omega'}.\end{equation}
We remark that~$U$ is open and~$\Omega=\overline{\Omega'}\cup U$,
with disjoint union. Thus we obtain
\begin{equation}\label{An53ffg1a1}
\begin{split}
& \Per(E,\Omega) -\Per(F,\Omega) -
\Per(E,\overline{\Omega'})+\Per(F,\overline{\Omega'})
\\
=\;& |\mu_E|(\Omega)-|\mu_F|(\Omega)
-|\mu_E|(\overline{\Omega'}) +|\mu_F|(\overline{\Omega'})\\
=\;& |\mu_E|(\overline{\Omega'}\cup U)-|\mu_F|(\overline{\Omega'}\cup U)
-|\mu_E|(\overline{\Omega'}) +|\mu_F|(\overline{\Omega'}) \\
=\;&
|\mu_E|(\overline{\Omega'})+|\mu_E|(U)-|\mu_F|(\overline{\Omega'})-|\mu_F|(U)
-|\mu_E|(\overline{\Omega'}) +|\mu_F|(\overline{\Omega'}) \\
=\;&
|\mu_E|(U)-|\mu_F|(U)\\
=\;& \Per(E,U) -\Per(F,U).
\end{split}\end{equation}
Now we observe that
$$ E\cap U = E\cap \big( \Omega\setminus\overline{\Omega'} \big)
= E\cap \Omega\cap (\overline{\Omega'})^c
= (E\setminus \overline{\Omega'})\cap\Omega,$$
and a similar set identity holds for~$F$.
Thus, by~\eqref{ssJK:1qs}, it follows that~$E\cap U=F\cap U$.
Therefore, by the locality of the classical perimeter
(see e.g. Proposition~3.38(c) in~\cite{AFP}), we obtain
$$ \Per(E,U)=\Per(F,U).$$
If one inserts this into~\eqref{An53ffg1a1}, then obtains~\eqref{ssJK:1qs:BIS}
when~$\sigma=1$.

Now we deal with the case~$\sigma\in(0,1)$.
For this we use~\eqref{9g12:LA} and~\eqref{U DEF L}
and we get that
\begin{eqnarray*}
&& \Per_\sigma (E,\Omega) - \Per_\sigma (E,\overline{\Omega'})\\
&=& L(E\cap \Omega,E^c) + L(E^c\cap\Omega,E\setminus\Omega)
- L(E\cap \overline{\Omega'},E^c) -
L(E^c\cap\overline{\Omega'},E\setminus\overline{\Omega'}) \\
&=& L(E\cap \overline{\Omega'},E^c) +L(E\cap U,E^c) 
+ L(E^c\cap\overline{\Omega'},E\setminus\Omega)
+ L(E^c\cap U,E\setminus\Omega)\\&&\qquad
- L(E\cap \overline{\Omega'},E^c) 
-L(E^c\cap\overline{\Omega'},E\setminus\Omega)
-L(E^c\cap\overline{\Omega'},E\cap U)
\\
&=& 
L(E\cap U,E^c)
+ L(E^c\cap U,E\setminus\Omega)
-L(E^c\cap\overline{\Omega'},E\cap U)
\\
&=& L(E\cap U, E^c\setminus \overline{\Omega'})
+ L(E^c\cap U,E\setminus\Omega),\end{eqnarray*}
and a similar formula holds for~$F$ replacing~$E$.
Now, from~\eqref{ssJK:1qs}, we see that
$$ E\cap U=F\cap U,\qquad
E^c\cap U =F^c\cap U,\qquad
E^c\setminus \overline{\Omega'}=
F^c\setminus \overline{\Omega'}\qquad{\mbox{and}}\qquad
E\setminus\Omega=F\setminus\Omega,$$ thus we obtain~\eqref{ssJK:1qs:BIS}
when~$\sigma\in(0,1)$.

Now, to prove~\eqref{ssJK:1qs:BIS:2}, we can focus on the case~$\sigma=1$
(since~$\PPer_\sigma =\Per_\sigma$ when~$\sigma\in(0,1)$,
thus in this case we return simply to~\eqref{ssJK:1qs:BIS}).
To this end, we observe that~$\Omega'\Subset \Omega_\Upsilon$
(recall formula~\eqref{PPP:D}),
so we can apply~\eqref{ssJK:1qs:BIS} to the sets~$\Omega'$ and~$\Omega_\Upsilon$
and obtain, when~$\sigma=1$,
$$
\PPer_\sigma(E,\Omega) -\PPer_\sigma(F,\Omega) =
\Per(E,\Omega_\Upsilon) -\Per(F,\Omega_\Upsilon)
= \Per(E,\overline{\Omega'}) -\Per(F,\overline{\Omega'}).$$
This completes the proof of~\eqref{ssJK:1qs:BIS:2}.
\end{proof}

Now we state the basic existence result
for the minimizers of the functional in~\eqref{FUNCT}:

\begin{lemma}[Existence of minimal pairs]\label{EX}
Fixed an admissible pair~$(u_o,E_o)$ such 
that~${\mathcal{E}}_\Omega(u_o,E_o)<+\infty$, there exists a minimal
pair~$(u,E)$ in~$\Omega$ such that~$u-u_o\in H^1_0(\Omega)$ 
and~$E\setminus\Omega$ coincides with~$E_o\setminus\Omega$ up to sets of 
measure zero.
\end{lemma}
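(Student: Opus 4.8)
The plan is to use the direct method in the calculus of variations. I would start with a minimizing sequence $(u_j, E_j)$ of admissible pairs satisfying the boundary constraints $u_j - u_o \in H^1_0(\Omega)$ and $E_j \setminus \Omega = E_o \setminus \Omega$ (up to measure zero), with ${\mathcal{E}}_\Omega(u_j, E_j) \to \inf {\mathcal{E}}_\Omega$. Since ${\mathcal{E}}_\Omega(u_o, E_o) < +\infty$, the infimum is finite and we may assume ${\mathcal{E}}_\Omega(u_j, E_j)$ is bounded. From the Dirichlet part of the energy I get a uniform bound on $\|\nabla u_j\|_{L^2(\Omega)}$, and since $u_j - u_o \in H^1_0(\Omega)$, Poincar\'e's inequality gives a uniform bound on $\|u_j\|_{H^1(\Omega)}$. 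Outside $\Omega$, $u_j$ is not constrained, but we only need control in $\Omega$ for the functional, so I would simply redefine $u_j$ to equal $u_o$ on $\R^n \setminus \Omega$ without changing the energy or admissibility; alternatively one argues on $\Omega_\Upsilon$ where needed for the perimeter term. From the perimeter part, using the coercivity hypothesis~\eqref{COE} on $\Phi$, the boundedness of $\Phi\big(\PPer_\sigma(E_j,\Omega)\big)$ forces $\PPer_\sigma(E_j,\Omega)$ to be bounded: if it were not, along a subsequence it would diverge and by monotonicity $\Phi\big(\PPer_\sigma(E_j,\Omega)\big) \to +\infty$, contradicting the bound.

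Next I would extract convergent subsequences. The bound on $\|u_j\|_{H^1(\Omega)}$ gives, up to a subsequence, weak convergence $u_j \rightharpoonup u$ in $H^1(\Omega)$ and, by Rellich--Kondrachov, strong convergence in $L^2(\Omega)$ and a.e. convergence in $\Omega$ (and a.e. on all of $\R^n$ after the redefinition outside $\Omega$). For the sets, I distinguish the two cases as the paper does. When $\sigma = 1$, the bound on $\Per(E_j, \Omega_\Upsilon)$, together with the fact that $E_j \setminus \Omega$ is fixed, gives a uniform BV bound for $\chi_{E_j}$ on a bounded neighborhood, so by BV compactness $\chi_{E_j} \to \chi_E$ in $L^1_{loc}$ and a.e., for some set $E$ with $E \setminus \Omega = E_o \setminus \Omega$. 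When $\sigma \in (0,1)$, the bound on $\Per_\sigma(E_j, \Omega)$ gives a uniform bound on a fractional Sobolev seminorm of $\chi_{E_j}$ (on the relevant region), hence compactness in $L^1_{loc}$ via fractional Sobolev embedding, again yielding $\chi_{E_j} \to \chi_E$ a.e. with the correct exterior data. By Lemma~\ref{ADM:L}, the limit pair $(u, E)$ satisfies the sign condition $u \ge 0$ a.e. in $E$ and $u \le 0$ a.e. in $E^c$, so $(u,E)$ is admissible; and $u - u_o \in H^1_0(\Omega)$ is preserved under weak $H^1$ limits since $H^1_0(\Omega)$ is weakly closed.

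Finally I would pass to the limit in the energy using lower semicontinuity of each term. The Dirichlet term $\int_\Omega |\nabla u|^2$ is weakly lower semicontinuous on $H^1(\Omega)$, so $\int_\Omega |\nabla u|^2 \le \liminf_j \int_\Omega |\nabla u_j|^2$. For the perimeter term: the (classical or fractional) perimeter is lower semicontinuous under $L^1_{loc}$ convergence of characteristic functions, so $\PPer_\sigma(E,\Omega) \le \liminf_j \PPer_\sigma(E_j,\Omega)$; then, since $\Phi$ is monotone nondecreasing and lower semicontinuous, $\Phi\big(\PPer_\sigma(E,\Omega)\big) \le \liminf_j \Phi\big(\PPer_\sigma(E_j,\Omega)\big)$ — here I would be slightly careful: by monotonicity $\Phi$ evaluated at the liminf of the perimeters is $\le \liminf$ of $\Phi$ at the perimeters, combining monotonicity with lower semicontinuity of $\Phi$. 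Adding the two inequalities gives ${\mathcal{E}}_\Omega(u,E) \le \liminf_j {\mathcal{E}}_\Omega(u_j,E_j) = \inf {\mathcal{E}}_\Omega$, so $(u,E)$ is a minimal pair. The main obstacle, or rather the only genuinely delicate point, is the handling of the nonlinearity $\Phi$ in the lower semicontinuity step — combining the monotonicity and lower semicontinuity of $\Phi$ with the lower semicontinuity of the perimeter — together with the bookkeeping needed for the $\sigma=1$ case to ensure the perimeter is taken on $\Omega_\Upsilon$ and the compactness is applied on a fixed bounded set containing it; everything else is a standard application of the direct method.
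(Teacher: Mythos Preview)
Your proposal is correct and follows essentially the same approach as the paper: take a minimizing sequence, use the coercivity assumption~\eqref{COE} to bound the perimeters, extract weakly/strongly convergent subsequences by the appropriate compactness theorems, invoke Lemma~\ref{ADM:L} for admissibility of the limit, and pass to the limit via lower semicontinuity of the Dirichlet energy, the perimeter, and then~$\Phi$ (using its monotonicity and lower semicontinuity). The paper's write-up is terser and does not spell out the Poincar\'e/redefinition step or separate the $\sigma=1$ and $\sigma\in(0,1)$ compactness cases, but the argument is the same.
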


\begin{proof} 
Let~$(u_j,E_j)$ be a minimizing sequence, namely
\begin{equation}\label{F6ty}
\lim_{j\to+\infty} {\mathcal{E}}_\Omega(u_j,E_j) =
\inf_{X_\Omega(u_o,E_o)} {\mathcal{E}}_\Omega,
\end{equation}
where~$X_\Omega(u_o,E_o)$ denotes the family of all admissible pairs~$(v,F)$ in~$\Omega$ such that~$v-u_o\in H^1_0(\Omega)$
and~$F\setminus\Omega$ coincides with~$E_o\setminus\Omega$ up to sets of
measure zero.

We stress that
$$ \sup_{j\in\N} \Phi\Big( \PPer_\sigma(E_j,\Omega)\Big) <+\infty,$$
thanks to~\eqref{F6ty}. By this and~\eqref{COE},
we obtain that
$$ \sup_{j\in\N} \Per_\sigma(E_j,\Omega)<+\infty.$$
Using this and~\eqref{F6ty},
by compactness (see e.g. Corollary 3.49 in~\cite{AFP} for the case~$\sigma=1$
or Theorem~7.1 in~\cite{guida} for the case~$\sigma\in(0,1)$),
we obtain that, up to subsequences, $u_j$ converges to some~$u$ weakly 
in~$H^1(\Omega)$ and strongly in~$L^2(\Omega)$, and~$\chi_{E_j}$ 
converges to some~$\chi_E$ strongly in~$L^1(\Omega)$, as~$j\to+\infty$.
By Lemma~\ref{ADM:L}, we have that~$(u,E)$ is an admissible pair,
and so by construction
\begin{equation}\label{KJ:8990}
(u,E)\in X_\Omega(u_o,E_o)
.\end{equation}
Also, by the lower semicontinuity (or Fatou Lemma, see e.g.
Proposition~3.38(b) in~\cite{AFP} for the case~$\sigma=1$) we have that
\begin{eqnarray*}
&& \liminf_{j\to+\infty} \int_\Omega |\nabla u_j(x)|^2\,dx \ge
\int_\Omega |\nabla u(x)|^2\,dx \\
{\mbox{and }}
&& \liminf_{j\to+\infty} \PPer_\sigma (E_j,\Omega)\ge
\PPer_\sigma (E,\Omega)
\end{eqnarray*}
and so, using also the monotonicity and the lower semicontinuity of~$\Phi$,
$$ 
\liminf_{j\to+\infty} \Phi\Big(\PPer_\sigma (E_j,\Omega)\Big)\ge
\Phi\Big( \liminf_{j\to+\infty}\PPer_\sigma (E_j,\Omega)\Big)\ge \Phi\Big(
\PPer_\sigma (E,\Omega)\Big).$$
These inequalities and~\eqref{F6ty} give that
$$ {\mathcal{E}}_\Omega(u,E)\le
\inf_{X_\Omega(u_o,E_o)} {\mathcal{E}}_\Omega,$$
and then equality holds in the formula above, thanks
to~\eqref{KJ:8990}.
\end{proof}

As it often happens in free boundary problems (see e.g.~\cites{alt, salsa, CSV}),
the solutions are harmonic in the positivity or negativity sets. 
This happens also in our case, as clarified by the following observation:

\begin{lemma}\label{LE:AR}
Let~$(u,E)$ be a minimal pair in~$\Omega$. Let~$U$ be an open set.
Assume that
either~$\displaystyle\inf_U u>0$
or~$\displaystyle\sup_U u<0$. Then~$u$ is harmonic
in~$U$.
\end{lemma}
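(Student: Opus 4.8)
The plan is to show that any admissible competitor obtained by perturbing $u$ inside $U$ (while keeping $E$ fixed) does not lower the energy, and then to invoke the classical characterization of harmonic functions. First I would fix an arbitrary open set $V\Subset U$ and an arbitrary function $\varphi\in C_c^\infty(V)$, and consider the competitor $u_t:=u+t\varphi$ for $t\in\R$ small. The key observation is that, since $\inf_U u>0$ (the other case being symmetric), for $|t|$ small enough we still have $u_t>0$ on $\overline V$ and $u_t=u$ outside $V$; hence $u_t\ge 0$ a.e.\ in $E$ and $u_t\le 0$ a.e.\ in $E^c$ continue to hold (the sign of $u_t$ agrees with that of $u$ everywhere, as nothing changed outside $V\subset U$ and inside $V$ the function stays positive). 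Therefore $(u_t,E)$ is an admissible pair with $u_t-u\in H_0^1(\Omega)$ and $E\setminus\Omega=E\setminus\Omega$, so it is a legitimate competitor in the minimization defining a minimal pair.

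Next I would compare energies. Since the set component is unchanged, the term $\Phi\big(\PPer_\sigma(E,\Omega)\big)$ is identical for $(u_t,E)$ and $(u,E)$, so the inequality ${\mathcal{E}}_\Omega(u,E)\le{\mathcal{E}}_\Omega(u_t,E)$ reduces to
$$
\int_\Omega|\nabla u|^2\,dx\le \int_\Omega|\nabla u_t|^2\,dx=\int_\Omega|\nabla u|^2\,dx+2t\int_\Omega\nabla u\cdot\nabla\varphi\,dx+t^2\int_\Omega|\nabla\varphi|^2\,dx.
$$
Dividing by $t$ and letting $t\to0^+$ and $t\to0^-$ gives $\int_\Omega\nabla u\cdot\nabla\varphi\,dx=0$ for every $\varphi\in C_c^\infty(V)$, i.e.\ $u$ is weakly harmonic in $V$. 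Since $V\Subset U$ was arbitrary, $u$ is weakly harmonic in all of $U$, and by Weyl's lemma $u$ is (smooth and) harmonic in $U$.

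The only point that needs a little care — and what I expect to be the main (minor) obstacle — is checking that $u_t$ remains admissible, specifically that the sign conditions $u_t\ge0$ a.e.\ in $E$ and $u_t\le0$ a.e.\ in $E^c$ are preserved. This is where the hypothesis $\inf_U u>0$ (rather than merely $u>0$ in $U$) is used: it provides a uniform lower bound $\inf_U u=:m>0$, so choosing $|t|\le m/(2\|\varphi\|_{L^\infty})$ guarantees $u_t\ge m/2>0$ throughout $\mathrm{supp}\,\varphi\subset U$, while $u_t=u$ elsewhere; hence $u_t$ has everywhere the same sign as $u$ and the sign constraints transfer verbatim. In the symmetric case $\sup_U u<0$ one argues identically with the sign reversed. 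No regularity of $\partial E$ or structure of $\Phi$ is needed, since the set and the perimeter term play no role in this local computation.
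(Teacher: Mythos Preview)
Your proof is correct and follows essentially the same approach as the paper's: perturb $u$ by a compactly supported test function inside $U$, use the strict positivity hypothesis $\inf_U u>0$ to ensure the perturbed pair $(u_t,E)$ remains admissible, note that the perimeter term cancels, and conclude from the first variation of the Dirichlet energy. The paper localizes to a small ball $B_{r/2}(x_o)\subset U$ rather than an arbitrary $V\Subset U$, but this is purely cosmetic.
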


\begin{proof} The proof is standard, but we give the
details for the facility of the reader.
We suppose that
\begin{equation}\label{K12s67}
\inf_U u>0, \end{equation}
the other case being
similar.
Let~$x_o\in U$. Since~$U$ is open, there exists~$r>0$
such that~$B_{r}(x_o)\subset U$. Let~$\psi\in C^\infty_0 (B_{r/2}(x_o))$.
Let also~$u_\epsilon:=u+\epsilon\psi$ and
$$ m:= \inf_{\overline{ B_{r/2}(x_o) }} u.$$
By \eqref{K12s67}, we know that~$m>0$. Thus, if~$\epsilon\in\R$,
with~$|\epsilon|<(1+\|\psi\|_{L^\infty(\R^n)})^{-1}m$, 
we have that~$u_\epsilon
\ge u-\epsilon \|\psi\|_{L^\infty(\R^n)}\ge0$ in~$B_{r/2}(x_o)$.
This and the fact that~$\psi$ vanishes outside~$B_{r/2}(x_o)$
give that~$(u_\epsilon,E)$ is an admissible pair.
Thus, the minimality of~$(u,E)$ gives that
$$ 0\le {\mathcal{E}}_{\Omega}(u_\epsilon,E)
-{\mathcal{E}}_{\Omega}(u,E) =\int_\Omega \Big(
|\nabla u(x)+\epsilon\nabla\psi(x)|^2
-
|\nabla u(x)|^2\Big)\,dx,$$
from which the desired result easily follows.
\end{proof}

As it often happens in free boundary problems,
the minimizers satisfy the following subharmonicity property:

\begin{lemma}\label{HJ:AR678a}
Let~$(u,E)$ be a minimal pair in $\Omega$ and~$u^+:=\max\{u,0\}$
and~$u^- := u^+ - u = -\min\{u,0\}$. Then both~$u^+$ and~$u^-$
are subharmonic in~$\Omega$, in the sense that
$$ \int_\Omega \nabla u^\pm (x)\cdot \nabla \psi(x)\,dx\le0,$$
for any~$\psi \in H^1_0(\Omega)$, with~$\psi\ge0$ a.e. in~$\Omega$.
\end{lemma}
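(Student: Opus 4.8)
\emph{Proof plan.} The strategy is to test the minimality of~$(u,E)$ against competitors that decrease (a truncation of) $u^+$ while leaving the set~$E$ unchanged, so that the term~$\Phi\big(\PPer_\sigma(E,\Omega)\big)$ in~\eqref{FUNCT} cancels and only the Dirichlet energies are compared. Since every nonnegative~$\psi\in H^1_0(\Omega)$ can be approximated in~$H^1(\Omega)$ by nonnegative functions of~$C^\infty_0(\Omega)$, it suffices to prove the inequality for~$\psi\in C^\infty_0(\Omega)$ with~$\psi\ge0$. Given~$\epsilon>0$, I would take
\[
v:=\big(u^+-\epsilon\psi\big)^+-u^- .
\]
The first point to verify is that~$(v,E)$ is an admissible pair with~$v-u\in H^1_0(\Omega)$: on~$E$ one has~$u^-=0$, hence~$v=(u^+-\epsilon\psi)^+\ge0$; on~$E^c$ one has~$u^+=0$, hence~$(u^+-\epsilon\psi)^+=(-\epsilon\psi)^+=0$ and~$v=-u^-\le0$; moreover~$v-u=(u^+-\epsilon\psi)^+-u^+$ belongs to~$H^1(\Omega)$ and vanishes a.e.\ outside~$\mathrm{supp}\,\psi\Subset\Omega$, so it lies in~$H^1_0(\Omega)$. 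This is precisely where the sign constraints in the definition of admissible pair dictate the shape of the competitor: the naive choice~$v=u-\epsilon\psi$ would in general violate~$v\ge0$ on~$E$.

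Since~$E$ is untouched, the minimality inequality~${\mathcal{E}}_\Omega(u,E)\le{\mathcal{E}}_\Omega(v,E)$ reduces to~$\int_\Omega|\nabla u|^2\le\int_\Omega|\nabla v|^2$. Because~$(u^+-\epsilon\psi)^+$ is supported in~$\{u^+>\epsilon\psi\}\subseteq\{u>0\}\subseteq\{u^-=0\}$, the cross terms in the expansions of both sides vanish a.e., and the inequality becomes~$\int_\Omega|\nabla u^+|^2\le\int_\Omega|\nabla(u^+-\epsilon\psi)^+|^2$. Writing~$w:=u^+$ and using the chain rule~$\nabla(w-\epsilon\psi)^+=(\nabla w-\epsilon\nabla\psi)\,\chi_{\{w>\epsilon\psi\}}$ together with the splitting~$\int_\Omega|\nabla w|^2=\int_{\{w>\epsilon\psi\}}|\nabla w|^2+\int_{\{w\le\epsilon\psi\}}|\nabla w|^2$, I would obtain
\[
0\le-2\epsilon\int_{\{w>\epsilon\psi\}}\nabla w\cdot\nabla\psi+\epsilon^2\int_\Omega|\nabla\psi|^2-\int_{\{w\le\epsilon\psi\}}|\nabla w|^2,
\]
whence, dividing by~$2\epsilon$ and discarding the last (nonpositive) term,
\[
\int_{\{w>\epsilon\psi\}}\nabla w\cdot\nabla\psi\le\frac{\epsilon}{2}\int_\Omega|\nabla\psi|^2 .
\]

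It then remains to let~$\epsilon\to0^+$. Since~$\psi\ge0$, the sets~$\{w>\epsilon\psi\}$ increase to~$\{w>0\}$ as~$\epsilon\downarrow0$, so by dominated convergence (dominating function~$|\nabla w|\,|\nabla\psi|\in L^1(\Omega)$) the left-hand side converges to~$\int_{\{w>0\}}\nabla w\cdot\nabla\psi$, while the right-hand side tends to~$0$. Finally, the gradient of a Sobolev function vanishes a.e.\ on its level sets, so~$\nabla u^+=0$ a.e.\ on~$\{u^+=0\}$, and therefore~$\int_{\{w>0\}}\nabla w\cdot\nabla\psi=\int_\Omega\nabla u^+\cdot\nabla\psi$; this yields~$\int_\Omega\nabla u^+\cdot\nabla\psi\le0$, as desired. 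The subharmonicity of~$u^-$ follows in the same way using the competitor~$v:=u^+-(u^--\epsilon\psi)^+$ (alternatively, by applying the statement just proved to the minimal pair~$(-u,E^c)$, which is legitimate since~$\PPer_\sigma(E^c,\Omega)=\PPer_\sigma(E,\Omega)$). I expect the only genuinely delicate points to be the admissibility check, which is forced by the sign constraints and rules out the obvious competitor, and the passage from the truncated Euler--Lagrange inequality on~$\{w>\epsilon\psi\}$ to the full one on~$\Omega$; the remaining computations are routine.
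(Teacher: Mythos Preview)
Your proof is correct and self-contained, but it proceeds along a genuinely different line from the paper's argument. The paper does not perform a first-order variation at all: instead it introduces the \emph{harmonic replacement}~$v^\star$ of~$u^+$ in~$\Omega$ with the constraint~$v^\star=0$ a.e.\ in~$E^c$, cites the fact (Lemma~2.3 in~\cite{salsa}) that such a constrained Dirichlet minimizer is always subharmonic, and then uses the minimality of~$(u,E)$ against the competitor~$(v^\star,E)$ together with the uniqueness of the harmonic replacement to conclude that~$u^+=v^\star$, hence~$u^+$ is subharmonic. Your approach avoids this external black box by testing directly against the explicit competitor~$(u^+-\epsilon\psi)^+-u^-$ and extracting the Euler--Lagrange inequality in the limit~$\epsilon\to0^+$; this is the classical obstacle-problem route and has the advantage of being entirely elementary, at the cost of the bookkeeping with the sets~$\{u^+>\epsilon\psi\}$ that you correctly flag as the delicate step. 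The paper's route is shorter on the page because the work is outsourced, and it has the side benefit of identifying~$u^+$ with a concrete variational object, whereas your argument yields only the distributional inequality that is actually claimed.
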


\begin{proof} The proof is a modification of the one in Lemma~2.7
in~\cite{salsa}, where this result was proved for the case in which~$\Phi$
is the identity and~$\sigma=1$. We give the details for the facility
of the reader. We argue for~$u^+$, since a similar reasoning works
for~$u^-$. We define~$v^\star$ to be the harmonic replacement of~$u^+$
in~$\Omega$ which vanishes in~$E^c$, that is the minimizer of the Dirichlet
energy in~$\Omega$ among all the functions~$v$ in~$H^1(\Omega)$ such that~$
v-u^+\in H^1_0(\Omega)$ and~$v=0$ a.e. in~$E^c$.
For the existence and the uniqueness of the harmonic replacement see e.g.
Section~2 in~\cite{salsa} or Lemma~2.1 in~\cite{HAR}.
In particular, the uniqueness result gives that
\begin{equation}\label{EF6U9:1}
\begin{split}
&{\mbox{if~$v$ in~$H^1(\Omega)$ is such that~$
v-u^+\in H^1_0(\Omega)$, $v=0$ a.e. in~$E^c$}}\\
&{\mbox{and }}\,
\int_\Omega |\nabla v(x)|^2\,dx\le
\int_\Omega |\nabla v^\star(x)|^2\,dx,\,
{\mbox{then $v=v^\star$ a.e. in~$\R^n$.}}
\end{split}
\end{equation}
Moreover, by Lemma~2.3 in~\cite{salsa},
we have that
\begin{equation}\label{EF6U9:2}
{\mbox{$v^\star$ is subharmonic.}}
\end{equation}
We also notice that~$v^\star\ge0$ by the classical maximum
principle and therefore~$(v^\star,E)$
is an admissible pair. Then, the minimality of~$(u,E)$
implies that
\begin{eqnarray*}
0 &\ge&
{\mathcal{E}}_{\Omega}(u,E) 
-
{\mathcal{E}}_{\Omega}(v^\star,E)\\
&=&
\int_\Omega |\nabla u(x)|^2\,dx-
\int_\Omega |\nabla v^\star (x)|^2\,dx\\
&\ge&
\int_\Omega |\nabla u^+(x)|^2\,dx-  
\int_\Omega |\nabla v^\star (x)|^2\,dx.\end{eqnarray*}
This implies that~$u^+$ coincides with~$v^\star$, thanks to~\eqref{EF6U9:1},
and so it is subharmonic, in light of~\eqref{EF6U9:2}.
\end{proof}

\begin{rem}\label{RE:su} In light of Lemma~\ref{HJ:AR678a},
we have (see e.g. Proposition 2.2 in~\cite{giaquinta}) that
the map
$$ R\to \frac{1}{|B_R|} \int_{B_R(p)} u^+(x)\,dx$$
is monotone nondecreasing, therefore, up to changing~$u^+$
in a set of measure zero, we can (and implicitly do from now on)
suppose that
$$ u(p)=\lim_{\epsilon\searrow0}
\frac{1}{|B_\epsilon|} \int_{B_\epsilon (p)} u^+(x)\,dx.$$
\end{rem}

Another simple and interesting property of
the solution is given by the following maximum principle:

\begin{lemma}\label{MAX:PLE}
Assume that
\begin{equation}\label{6tHHujKaK}
{\mbox{$\Phi(0)<\Phi(t)$ for any~$t>0$.}}\end{equation}
Let $(u, E)$ be a minimal pair in~$\Omega$ and let $a\in\R$. 
If~$u\le a$ in~$\Omega^c$, then~$u\le a$ in the whole of~$\R^n$.

Similarly, if~$u\ge a$ in~$\Omega^c$, then~$u\ge a$ in the whole of~$\R^n$.
\end{lemma}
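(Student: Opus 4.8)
The plan is to run the classical truncation argument for a minimizer, the only new ingredient being that we must also exhibit a competitor set. We treat the first assertion; the second is entirely symmetric (working with $v:=\max\{u,a\}$ in place of $v:=\min\{u,a\}$, and, when $a>0$, with the competitor set $F:=\R^n$, which is admissible because $(u,E)$ admissible together with $u\ge a>0$ in $\Omega^c$ force $|E^c\setminus\Omega|=0$). So, let $(u,E)$ be a minimal pair with $u\le a$ in $\Omega^c$, and set $v:=\min\{u,a\}$. Since $u\le a$ in $\Omega^c$, we have $v=u$ there, hence $v-u\in H^1_0(\Omega)$; moreover $\nabla v=\nabla u\,\chi_{\{u<a\}}$ a.e., so that $\int_\Omega|\nabla v|^2\le\int_\Omega|\nabla u|^2$, with equality precisely when $\nabla u=0$ a.e.\ on $\{u>a\}\cap\Omega$.

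Next we pick the competitor set according to the sign of $a$. If $a\ge0$, we keep $F:=E$: on $E$ one has $u\ge0$, hence $v=\min\{u,a\}\ge0$ a.e.\ there; on $E^c$ one has $u\le0\le a$, hence $v=u\le0$ a.e.\ there. Thus $(v,E)$ is an admissible competitor for $(u,E)$ and $\PPer_\sigma(E,\Omega)$ is unchanged. If instead $a<0$, then from $u\ge0$ a.e.\ in $E$ and $u\le a<0$ in $\Omega^c$ we get $|E\setminus\Omega|=0$; we then take $F:=\emptyset$, so that $F\setminus\Omega=E\setminus\Omega$ up to null sets, and $(v,\emptyset)$ is admissible since $v\le a<0$ everywhere. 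Here $\PPer_\sigma(\emptyset,\Omega)=0\le\PPer_\sigma(E,\Omega)$, so by the monotonicity of $\Phi$ and by~\eqref{6tHHujKaK} the $\Phi$-term of the competitor does not exceed that of $(u,E)$.

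We now insert $(v,F)$ into the minimality inequality ${\mathcal{E}}_\Omega(u,E)\le{\mathcal{E}}_\Omega(v,F)$. Since on the right-hand side the Dirichlet part can only decrease and the $\Phi$-part can only decrease, every inequality in the chain must be an equality; in particular $\int_\Omega|\nabla v|^2=\int_\Omega|\nabla u|^2$, which by the first paragraph forces $\nabla u=0$ a.e.\ on $\{u>a\}\cap\Omega$. Hence $w:=(u-a)^+$ belongs to $H^1_0(\Omega)$ and satisfies $\nabla w=0$ a.e.\ in $\R^n$ (it vanishes outside $\Omega$, since $u\le a$ there); as $\R^n$ is connected and $w$ vanishes on the nonempty set $\Omega^c$, we conclude $w\equiv0$, i.e.\ $u\le a$ a.e.\ in $\R^n$. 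The pointwise inequality then follows from the normalization of the representative of $u$ chosen in Remark~\ref{RE:su} (together with its analogue for $u^-$ when $a<0$).

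The step I expect to require the most care is the choice of the competitor set $F$ when $a<0$: one must first observe that admissibility of $(u,E)$ already makes $E\setminus\Omega$ negligible, so that $F=\emptyset$ is a legitimate competitor and hypothesis~\eqref{6tHHujKaK} can be exploited; once this bookkeeping is in place, the rest is the standard one-line truncation argument.
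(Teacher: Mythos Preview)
Your proof is correct. The easy case ($a\ge0$ for the $\le$ direction) matches the paper's argument essentially verbatim. In the hard case, however, the paper takes a different route: for $u\ge a$ with $a>0$ it compares $(u,E)$ not with the truncation but with the \emph{harmonic replacement} $u^\sharp$ of $u$ in $\Omega$, paired with $\R^n$. From minimality it first deduces $\Phi\big(\PPer_\sigma(E,\Omega)\big)\le\Phi(0)$, then invokes~\eqref{6tHHujKaK} to force $\PPer_\sigma(E,\Omega)=0$, and only then concludes that the Dirichlet defect $\Gamma$ vanishes, whence $u=u^\sharp\ge a$. Your argument is more elementary and more uniform: you keep the truncation $v=\min\{u,a\}$ in both cases and simply switch the competitor set to $F=\varnothing$ when $a<0$, so that the chain $\mathcal{E}_\Omega(u,E)\le\mathcal{E}_\Omega(v,F)\le\mathcal{E}_\Omega(u,E)$ collapses directly. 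A pleasant byproduct is that your proof in fact only uses the monotonicity of $\Phi$ (to get $\Phi(0)\le\Phi\big(\PPer_\sigma(E,\Omega)\big)$), so the reference to~\eqref{6tHHujKaK} in your second paragraph is superfluous; the paper's route, by contrast, genuinely needs the strict inequality at $0$ to pass from $\Phi\big(\PPer_\sigma(E,\Omega)\big)\le\Phi(0)$ to $\PPer_\sigma(E,\Omega)=0$.
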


\begin{proof} We suppose that
\begin{equation}\label{FG6722}
{\mbox{$u\ge a$ in~$\Omega^c$,}}
\end{equation}
the other case being analogous.

We need to distinguish the cases~$a\le0$ and~$a>0$.

If~$a\le 0$, we take~$u^\star :=\max\{u,a\}$.
Notice that~$(u^\star,E)$ is an admissible pair:
indeed, a.e. in~$E$ we have that~$0\le u\le u^\star$,
while a.e. in~$E^c$ we have that~$u\le0$ and so~$u^\star\le0$.
Also, by~\eqref{FG6722}, we have that~$u\ge a$
in~$\Omega^c$, and so~$u^\star = u$ in~$\Omega^c$.
As a consequence, the minimality of~$(u,E)$ gives that
$$ 0\le {\mathcal{E}}_{\Omega}(u^\star,E) -{\mathcal{E}}_{\Omega}(u,E)
= \int_\Omega \Big(|\nabla u^\star(x)|^2 -|\nabla u(x)|^2\Big)\,dx
=-\int_{\Omega\cap \{ u<a\}}|\nabla u(x)|^2\,dx,$$
which implies that~$u\ge a$, as desired.

Now suppose that~$a>0$.
We take~$u^\sharp$ to be the minimizer of the Dirichlet
energy in~$\Omega$ with trace datum~$u$ along~$\partial\Omega$
(and thus we set~$u^\sharp:=u$ outside~$\Omega$); then we have that
\begin{equation}\label{6c3ty}
\Gamma:=\int_\Omega |\nabla u (x)|^2\,dx
-
\int_\Omega |\nabla u^\sharp(x)|^2\,dx\ge 0.
\end{equation}
Moreover, by~\eqref{FG6722} and the classical maximum principle,
we know that
\begin{equation}\label{8uf556HH}
{\mbox{$u^\sharp\ge a$ in the whole of~$\R^n$.}}\end{equation}
Thus, $u^\sharp>0$ and so~$(u^\sharp,\R^n)$ is an admissible pair.
Accordingly, the minimality of~$(u,E)$ and~\eqref{6c3ty}
give that
\begin{equation}\label{74d99q}
\begin{split}
0 \;&\le {\mathcal{E}}_{\Omega}(u^\sharp,\R^n) -{\mathcal{E}}_{\Omega}(u,E)
\\&= \int_\Omega |\nabla u^\sharp(x)|^2\,dx
+\Phi(0)-\int_\Omega |\nabla u (x)|^2\,dx -\Phi\Big( \PPer_\sigma(E,\Omega)\Big)
\\ &= -\Gamma+ 
\Phi(0)-\Phi\Big( \PPer_\sigma(E,\Omega)\Big).\end{split}\end{equation}
As a consequence, 
$$ \Phi\Big( \PPer_\sigma(E,\Omega)\Big)\le -\Gamma+\Phi(0)
\le \Phi(0),$$
hence, exploiting~\eqref{6tHHujKaK},
we see that~$\PPer_\sigma(E,\Omega)=0$.
Plugging this information into~\eqref{74d99q}, we obtain
that~$ 0\le -\Gamma$ and thus, recalling~\eqref{6c3ty},
we conclude that~$\Gamma=0$.
By the uniqueness of the minimizer of
the Dirichlet energy, this implies that~$u^\sharp$ coincides
with~$u$. In light of this and of~\eqref{8uf556HH}, we have that~$u=u^\sharp
\ge a$, as desired.
\end{proof}

Now we give a uniform bound on the (classical or fractional) perimeter
of the sets in the minimal pairs:

\begin{lemma}\label{mAi86y}
Suppose that~$\Omega$ is strictly starshaped (i.e.~$t\overline\Omega
\subseteq\Omega$ for any~$t\in(0,1)$) and that
\begin{equation}\label{ZXJH:INA}
{\mbox{$\Phi$ is strictly monotone.}}
\end{equation}
Let~$(u,E)$ be a minimal pair in~$\Omega$. Assume that~$u\ge0$.
Then, for any~$\Omega'\subseteq \Omega$,
with~$\Omega'$ open, Lipschitz and bounded, we have that
\begin{equation}\label{7y45:aa1}
\Per_\sigma (E,\Omega')\le
2\Per_\sigma (\Omega',\R^n).\end{equation}
In particular, if~$\Omega\supseteq B_R$,
then, for any~$r\in(0,R]$,
\begin{equation}\label{7y45:aa2}
\Per_\sigma (E,B_r)\le Cr^{n-\sigma},\end{equation}
for some~$C>0$ possibly depending on~$n$ and~$\sigma$.
\end{lemma}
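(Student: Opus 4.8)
The plan is to use the strict starshapedness of $\Omega$ to build, for a given minimal pair $(u,E)$ and a given $\Omega'\subseteq\Omega$, a competitor set $F$ which agrees with $E$ outside $\Omega'$ (hence outside $\Omega$) and which has small perimeter inside $\Omega'$. The natural choice is $F:=E\setminus\Omega'$ (equivalently, one may take $F:=E\cup\Omega'$; the point is to replace $E$ by something whose boundary inside $\Omega'$ is governed by $\partial\Omega'$). Since $u\ge0$, the pair $(u,F)$ is admissible: we only need $u\ge0$ a.e. in $F$, which is automatic, and $u\le0$ a.e. in $F^c$; the latter would require $u\le0$ on $\Omega'\setminus E$. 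This is the first place where care is needed, and it is why one phrases the competitor so as not to destroy admissibility — most cleanly, one takes $F = E\cup\Omega'$ when the exterior data forces $u\ge 0$, so that $F^c\subseteq E^c$ and $u\le0$ a.e. on $F^c$ is inherited from $(u,E)$. In fact, since $u\ge 0$ everywhere, $u\le 0$ a.e. on $E^c$ forces $u=0$ a.e. on $E^c$, and then $(u, E\cup\Omega')$ is admissible regardless.

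Next I would compare energies. Using the ``clean cut'' Lemma \ref{HJ:LEM1} (applied with a slightly shrunk Lipschitz open set $\Omega''$ with $\Omega'\Subset\Omega''\Subset\Omega$, or directly with $\overline{\Omega'}$ after a routine approximation), the Dirichlet parts are equal since $u$ is unchanged, so minimality of $(u,E)$ gives
\begin{equation*}
\Phi\Big(\PPer_\sigma(E,\Omega)\Big)\le \Phi\Big(\PPer_\sigma(F,\Omega)\Big).
\end{equation*}
By strict monotonicity of $\Phi$, assumption \eqref{ZXJH:INA}, this forces $\PPer_\sigma(E,\Omega)\le\PPer_\sigma(F,\Omega)$, and then Lemma \ref{HJ:LEM1} (in the form \eqref{ssJK:1qs:BIS:2}) converts this into
\begin{equation*}
\Per_\sigma(E,\overline{\Omega'})\le\Per_\sigma(F,\overline{\Omega'}).
\end{equation*}
Now I estimate the right-hand side. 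Since $F=E\cup\Omega'$, the boundary of $F$ inside $\overline{\Omega'}$ is contained in $\partial\Omega'$, so $\Per_\sigma(F,\overline{\Omega'})$ is controlled by the perimeter of $\Omega'$ itself; in the classical case this is just $\Per(\Omega',\cdot)$ restricted to the relevant region, and in the fractional case one bounds $L(F\cap\overline{\Omega'},F^c)+L(F^c\cap\overline{\Omega'},F\setminus\overline{\Omega'})$ by the corresponding bilinear forms for $\Omega'$, since $F\supseteq\Omega'$ makes $F^c\subseteq(\Omega')^c$. Keeping track of both the piece inside $\Omega'$ and the cross term with $E\setminus\Omega'$ yields the factor $2$ and gives \eqref{7y45:aa1}: $\Per_\sigma(E,\Omega')\le 2\,\Per_\sigma(\Omega',\R^n)$. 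Here the strict starshapedness enters only to guarantee that there are enough admissible competitors $\Omega'$ with finite perimeter filling up $\Omega$ — concretely, one first proves \eqref{7y45:aa1} for $\Omega'=t\Omega$ with $t\in(0,1)$, where $t\overline\Omega\subseteq\Omega$, and then passes to general Lipschitz $\Omega'\subseteq\Omega$ by a covering/monotonicity argument or simply by noting the competitor construction works verbatim for any such $\Omega'$.

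Finally, for \eqref{7y45:aa2} I would take $\Omega'=B_r$ with $r\in(0,R]$ (legitimate since $B_R\subseteq\Omega$), and use the scaling $\Per_\sigma(B_r,\R^n)=r^{n-\sigma}\Per_\sigma(B_1,\R^n)$ when $\sigma\in(0,1)$ and $\Per(B_r,\R^n)=r^{n-1}\Per(B_1,\R^n)$ when $\sigma=1$, so that \eqref{7y45:aa1} becomes $\Per_\sigma(E,B_r)\le 2\,\Per_\sigma(B_1,\R^n)\,r^{n-\sigma}=:C\,r^{n-\sigma}$, as claimed. I expect the main obstacle to be the bookkeeping in the fractional case: verifying that replacing $E$ by $E\cup\Omega'$ genuinely does not increase the nonlocal perimeter inside $\overline{\Omega'}$ beyond $2\Per_\sigma(\Omega',\R^n)$ requires splitting the interaction integrals carefully (interior, exterior, and long-range cross terms) and using $\Omega'\subseteq F$ to discard or dominate the unfavorable pieces — essentially a variational inequality for the nonlocal perimeter of the form ``enlarging a set by a region $\Omega'$ costs at most the perimeter of $\Omega'$ locally.'' The classical case is comparatively immediate from the locality of $\Per$ and the inclusion $\partial(E\cup\Omega')\cap\overline{\Omega'}\subseteq\partial\Omega'$.
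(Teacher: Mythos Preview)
Your approach is essentially the paper's: take the competitor $F=E\cup\Omega'$, compare energies, use strict monotonicity of~$\Phi$ to get $\PPer_\sigma(E,\Omega)\le\PPer_\sigma(F,\Omega)$, convert via the Clean Cut Lemma to $\Per_\sigma(E,\overline{\Omega'})\le\Per_\sigma(F,\overline{\Omega'})$, and bound the right side by $2\Per_\sigma(\Omega',\R^n)$. One small difference: the paper pairs $F$ with the harmonic replacement $v$ of $u$ in $\Omega'$ rather than with $u$ itself; your observation that $(u,F)$ is already admissible (since $F^c\subseteq E^c$ and $u\ge0$) and has identical Dirichlet energy is correct and slightly simpler.

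Your account of where strict starshapedness enters is off, however. The Clean Cut Lemma requires $\Omega'\Subset\Omega$, so the competitor argument does \emph{not} work verbatim for every Lipschitz $\Omega'\subseteq\Omega$; in particular your intermediate set $\Omega''$ with $\Omega'\Subset\Omega''\Subset\Omega$ need not exist. The paper first proves the bound for $\Omega'\Subset\Omega$ (no starshapedness needed), and then, for a general $\Omega'\subseteq\Omega$, sets $\Omega'_\epsilon:=(1-\epsilon)\Omega'$, uses strict starshapedness to ensure $\overline{\Omega'_\epsilon}\subseteq\Omega$, applies the compactly-contained case, and passes to the limit $\epsilon\searrow0$ using $\Per_\sigma(\Omega'_\epsilon,\R^n)=(1-\epsilon)^{n-\sigma}\Per_\sigma(\Omega',\R^n)$ together with $\Per_\sigma(E,\Omega'_\epsilon)\to\Per_\sigma(E,\Omega')$. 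That limiting step is where the hypothesis is actually used.
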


\begin{proof} We observe that~\eqref{7y45:aa2}
follows from~\eqref{7y45:aa1} by taking~$\Omega':=B_r$,
so we focus on the proof of~\eqref{7y45:aa1}.
For this, first we suppose that~$\Omega'\Subset\Omega$
(the general case in which~$\Omega'\subseteq\Omega$
will be considered at the end of
the proof, by a limit procedure). Let~$F:=E\cup \Omega'$.
Notice that~$F\setminus \overline{\Omega'} =E\cup \Omega'\cap (\overline{\Omega'})^c
= E\setminus \overline{\Omega'}$. Thus, by formula~\eqref{ssJK:1qs:BIS:2}
in Lemma~\ref{HJ:LEM1},
we get that
\begin{equation}\label{8gf3erfh}
\PPer_\sigma(E,\Omega) -\PPer_\sigma(F,\Omega) =
\Per_\sigma(E,\overline{\Omega'})-\Per_\sigma(F,\overline{\Omega'})
.\end{equation}
Now, let~$v$ be the minimizer of the Dirichlet energy
in~$\Omega'$ with trace datum~$u$ along $\partial \Omega'$
(then take~$v:=u$ outside~$\Omega'$). Since~$u\ge0$, then so is~$v$.
Hence, the pair~$(v,F)$ is admissible. Therefore,
the minimality of~$(u,E)$
implies that
\begin{equation*}
\begin{split}
0\; &\le
{\mathcal{E}}_\Omega (v,F) -
{\mathcal{E}}_\Omega (u,E)
\\ &= \int_{\Omega'} |\nabla v(x)|^2\,dx
- \int_{\Omega'} |\nabla u(x)|^2\,dx
+\Phi\Big(\PPer_\sigma (F,\Omega)\Big) -\Phi\Big(\PPer_\sigma (E,\Omega)\Big)
\\ &\le 0+
\Phi\Big(\PPer_\sigma(F,\Omega)\Big)
-\Phi\Big(\PPer_\sigma(E,\Omega)\Big)
.\end{split}\end{equation*}
Hence, by~\eqref{ZXJH:INA}, we have that~$\PPer_\sigma(E,\Omega)\le
\PPer_\sigma(F,\Omega)$ and so, by~\eqref{8gf3erfh},
\begin{equation}\label{X12}
\Per_\sigma(E,\overline{\Omega'})-\Per_\sigma(F,\overline{\Omega'})
=
\PPer_\sigma(E,\Omega) -\PPer_\sigma(F,\Omega) \le0.
\end{equation}
In addition, we have that
$$ \Per_\sigma(F,\overline{\Omega'})
=\Per_\sigma(E\cup \Omega',\overline{\Omega'})
\le 2\Per_\sigma(\Omega',\R^n),$$
where the last formula follows using~\eqref{9g12:LA}
if~$\sigma\in(0,1)$ and, for instance, formula~(16.12)
in~\cite{maggi} when~$\sigma=1$.

The latter inequality and~\eqref{X12} give that
$$ \Per_\sigma(E,{\Omega'})\le
\Per_\sigma(E,\overline{\Omega'})\le \Per_\sigma(F,\overline{\Omega'})\le
2\Per_\sigma(\Omega',\R^n).$$
This proves the desired result when~$\Omega'\Subset\Omega$.
Let us now deal with the case~$\Omega'\subseteq\Omega$.
For this, we set~$\Omega_\epsilon':=(1-\epsilon)\Omega'$.
Since~$\Omega$ is 
strictly starshaped, we have that~$\overline{\Omega_\epsilon'}=
(1-\epsilon)\overline{\Omega'}\subseteq (1-\epsilon)\overline{\Omega}\subseteq
\Omega$ for any~$\epsilon\in(0,1)$, so we can use the result
already proved and we get that
\begin{equation}\label{7y45:aa1:epsilon}
\Per_\sigma (E,\Omega'_\epsilon)\le
2\Per_\sigma (\Omega'_\epsilon,\R^n).\end{equation}
Moreover,
\begin{equation}\label{7y45:aa1:epsilon:2}
\Per_\sigma (\Omega'_\epsilon,\R^n) = (1-\epsilon)^{n-\sigma}
\Per_\sigma (\Omega',\R^n).\end{equation}
Also, we claim that
\begin{equation}\label{CLya}
\lim_{\epsilon\searrow0} \Per_\sigma(E,{\Omega_\epsilon'})=\Per_\sigma(E,{\Omega'}).\end{equation}
To prove it, we distinguish the cases~$\sigma=1$ and~$\sigma\in(0,1)$.
If~$\sigma=1$, we use the representation
of the perimeter of~$E$ in term of the
Gauss-Green measure~$\mu_E$ (see Remark 12.2 in~\cite{maggi})
and the Monotone Convergence Theorem (applied
to the monotone sequence of sets~${\Omega_\epsilon'}$, see e.g.
Theorem~1.26(a) in~\cite{YEH}): in this way, we have
$$ \lim_{\epsilon\searrow0} \Per(E,\Omega_\epsilon')=
\lim_{\epsilon\searrow0}|\mu_E|(\Omega_\epsilon')=
|\mu_E|(\Omega')=
\Per(E,\Omega').$$
This proves~\eqref{CLya}
when~$\sigma=1$. If instead~$\sigma\in(0,1)$,
we first observe that~$\Per_\sigma(E,\Omega_\epsilon')\le \Per_\sigma(E,\Omega')$
and then
\begin{equation}\label{CLya-2}
\limsup_{\epsilon\searrow0} \Per_\sigma(E,{\Omega_\epsilon'})\le \Per_\sigma(E,{\Omega'}).\end{equation}
Conversely, we use~\eqref{9g12:LA} to write
\begin{eqnarray*}
\Per_\sigma (E,\Omega_\epsilon') &=&
L(E\cap \Omega_\epsilon',E^c) + L(E^c\cap \Omega_\epsilon',E\cap (\Omega_\epsilon')^c)\\
&\ge& L(E\cap \Omega_\epsilon',E^c) + L(E^c\cap \Omega_\epsilon',E\cap (\Omega')^c).\end{eqnarray*}
Consequently, by taking the limit here above
and using Fatou's Lemma,
$$ \liminf_{\epsilon\searrow0} \Per_\sigma(E,{\Omega_\epsilon'})\ge
L(E\cap \Omega',E^c) + L(E^c\cap \Omega',E\cap (\Omega')^c)
=\Per_\sigma(E,\Omega').$$
This, together with~\eqref{CLya-2}, establishes~\eqref{CLya}.

Now, combining~\eqref{7y45:aa1:epsilon}, \eqref{7y45:aa1:epsilon:2}
and~\eqref{CLya}, we obtain~\eqref{7y45:aa1} by taking a limit in~$\epsilon$.
\end{proof}

\section{Proof of Theorem~\ref{EXAMPLE}}\label{sec:ex}

Now we prove Theorem~\ref{EXAMPLE}. The idea of the proof
is that, on the one hand, for large balls, we obtain a large contribution
of the perimeter, which makes the energy functional
simply the Dirichlet energy plus a constant, due to the
special form of~$\Phi$. On the other hand,
for small balls, both the Dirichlet energy and the perimeter
give small contribution, and in this range the
contribution of the perimeter becomes predominant.
This dichotomy of the energy behavior makes the
minimal pair change accordingly, namely, in 
large balls, harmonic functions are favored, somehow
independently of their level sets, while, conversely,
for small balls the sets which minimize the perimeter
are favored, somehow independently on the Dirichlet
energy of the function that they support. That is, in the end,
the core of the counterexample is, roughly speaking, that being a minimal surface
is something rather different than being the level set of
a harmonic function.

Of course, some computations are needed to justify the above
heuristic arguments and we present now all the details of the proof.

\subsection{Estimates on~$\Per_\sigma(E,B_R)$ from below}

Here we obtain bounds from below
for the (either classical or fractional)
perimeter of a set~$E$ in~$B_R$, once~$E$ is ``suitably fixed'' 
outside\footnote{For simplicity, we state and prove all the results
of this part only in~$\R^2$, though some of the arguments would
also be valid in higher dimension.}
the ball~$B_R\subset\R^2$.
For this scope, we recall the notation in~\eqref{tilde u}
and~\eqref{tilde E},
and we have:

\begin{lemma}\label{PER:1}
Let~$c_o>0$.
Let~$(u,E)$ be an admissible pair in~$\R^2$.
Assume that~$u-\tilde u\in H^1_0(B_1)$
and that
\begin{equation*}
\int_{B_1}|\nabla u(X)|^2\,dX \le c_o.
\end{equation*}
Then there exists~$c>0$, possibly depending on~$c_o$,
such that
\begin{equation}\label{JH86:l} \Per_\sigma(E,B_1)\ge c.\end{equation}\end{lemma}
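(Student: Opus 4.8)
The plan is to show that the admissible pair $(u,E)$ cannot have $E$ essentially coinciding with $\tilde E$ outside a ball while having tiny perimeter inside $B_1$, because the constraint $u-\tilde u\in H^1_0(B_1)$ forces $u$ to change sign across the two coordinate axes, and the requirement $u\ge 0$ in $E$, $u\le 0$ in $E^c$ then forces $\partial E$ to be ``pinned'' near those axes in an essential way. More precisely, since $u=\tilde u=xy$ on $\partial B_1$, the function $u$ is positive on $\partial B_1\cap\{x,y>0\}$ and on $\partial B_1\cap\{x,y<0\}$, and negative on the other two arcs. The set $\{u>0\}\subseteq E$ and $\{u<0\}\subseteq E^c$ (up to null sets), so $\partial E$ inside $B_1$ must separate $\{u>0\}$ from $\{u<0\}$, and both of these sets have substantial trace on $\partial B_1$. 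First I would make this quantitative: if $\Per_\sigma(E,B_1)$ were smaller than some $c=c(c_o)$, then $E\cap B_1$ would be, in measure, either almost all of $B_1$ or almost none of it (by the relative isoperimetric inequality in $B_1$, valid for both $\sigma=1$ and $\sigma\in(0,1)$ via the fractional isoperimetric/Sobolev inequality).

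The key steps, in order, are: (1) Fix the boundary behavior: using $u-\tilde u\in H^1_0(B_1)$, deduce by trace theory that $u$ and $\tilde u$ have the same trace on $\partial B_1$, hence there are two arcs $\Gamma^+\subset\partial B_1$ on which $u>0$ and two arcs $\Gamma^-$ on which $u<0$, each of definite length. (2) Use the energy bound $\int_{B_1}|\nabla u|^2\le c_o$ together with a trace/Poincar\'e-type inequality to show that $u$ cannot be close to a sign-definite function: quantitatively, $|\{u>0\}\cap B_1|\ge\kappa$ and $|\{u<0\}\cap B_1|\ge\kappa$ for some $\kappa=\kappa(c_o)>0$. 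Indeed, if, say, $|\{u<0\}\cap B_1|$ were very small, then $u^-$ would be a function in $H^1(B_1)$ with small support but with non-small trace on $\Gamma^-$, contradicting the continuity of the trace operator $H^1(B_1)\to L^2(\partial B_1)$ once the Dirichlet energy is bounded (one can run this through the inequality $\|u^-\|_{L^2(\partial B_1)}\le C(\|\nabla u^-\|_{L^2(B_1)}+\|u^-\|_{L^2(B_1)})$ and an interpolation/Sobolev bound on $\|u^-\|_{L^2(B_1)}$ in terms of $|\{u<0\}\cap B_1|$ and $\|\nabla u^-\|_{L^2(B_1)}$). (3) Since $\{u>0\}\subseteq E$ and $\{u<0\}\subseteq E^c$ up to null sets, conclude $|E\cap B_1|\ge\kappa$ and $|E^c\cap B_1|\ge\kappa$. (4) Apply the relative isoperimetric inequality in the ball — in the form $\min\{|E\cap B_1|,|E^c\cap B_1|\}^{(n-\sigma)/n}\le C\,\Per_\sigma(E,B_1)$ for $\sigma\in(0,1)$, and the classical relative isoperimetric inequality for $\sigma=1$ — to obtain $\Per_\sigma(E,B_1)\ge c(\kappa)=:c>0$, which is \eqref{JH86:l}.

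The main obstacle I expect is step (2): extracting a genuinely uniform lower bound on the measures of $\{u>0\}\cap B_1$ and $\{u<0\}\cap B_1$ purely from the Dirichlet energy bound and the boundary trace. The subtlety is that $u$ could in principle be large and positive on a thin set while being barely negative on the arcs $\Gamma^-$; one must use that the trace of $u$ on $\Gamma^-$ equals that of $\tilde u = xy$, which is bounded \emph{below} in absolute value by a fixed constant on a subarc of definite length, so $u^-$ has trace of definite $L^2$ size on $\partial B_1$. Then the chain ``small measure of support $+$ bounded Dirichlet energy $\Rightarrow$ small $L^2$ norm in $B_1$ $\Rightarrow$ (with the gradient bound) small trace on $\partial B_1$'' gives the contradiction, but making each implication quantitative and dimension-correct (here $n=2$) requires care with the Sobolev and trace constants. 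Everything else — the isoperimetric inequality, the inclusion of sign sets in $E$ and $E^c$ — is standard and uniform, and the dependence of the final constant $c$ on $c_o$ comes only through $\kappa(c_o)$.
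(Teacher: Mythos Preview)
Your approach is correct in spirit and genuinely different from the paper's. The paper argues by a soft compactness contradiction: assuming a sequence $(u_j,E_j)$ with $\Per_\sigma(E_j,B_1)\to 0$, it extracts $H^1$-weak and $L^1$-strong limits $u_\infty$ and $E_\infty$, uses lower semicontinuity to get $\Per_\sigma(E_\infty,B_1)=0$, then the relative isoperimetric inequality forces $|B_1\cap E_\infty|=0$ or $|B_1\setminus E_\infty|=0$, whence $u_\infty\le 0$ (or $\ge 0$) a.e.\ in $B_1$, contradicting $u_\infty-\tilde u\in H^1_0(B_1)$ near $\partial B_1$ in the first quadrant. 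No quantitative trace or Sobolev estimates are needed, and no explicit constant comes out.

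Your route trades this softness for a direct, effective bound: you produce $\kappa=\kappa(c_o)$ with $|E\cap B_1|,|E^c\cap B_1|\ge\kappa$ and then invoke the relative isoperimetric inequality once. This yields an explicit $c(c_o)$, which the compactness proof does not. The one place your sketch is loose is the implication ``small $\|u^-\|_{L^2(B_1)}$ $+$ bounded $\|\nabla u^-\|_{L^2(B_1)}$ $\Rightarrow$ small trace'': the additive trace inequality $\|u^-\|_{L^2(\partial B_1)}\le C(\|\nabla u^-\|_{L^2}+\|u^-\|_{L^2})$ does \emph{not} give this, since the gradient term is only bounded. You need the multiplicative trace inequality $\|u^-\|_{L^2(\partial B_1)}^2\le C\,\|u^-\|_{L^2(B_1)}\,\|u^-\|_{H^1(B_1)}$ (valid on $B_1$), together with a Friedrichs inequality (available because $u^-$ vanishes on the arcs $\Gamma^+\subset\partial B_1$ of definite length) to control $\|u^-\|_{H^1}$ by $\|\nabla u^-\|_{L^2}\le c_o^{1/2}$ alone. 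With those two ingredients your chain closes: $H^1\hookrightarrow L^4$ in $\R^2$ gives $\|u^-\|_{L^2}^2\le |\{u<0\}\cap B_1|^{1/2}\|u^-\|_{L^4}^2\le C(c_o)\,|\{u<0\}\cap B_1|^{1/2}$, and then the multiplicative trace bound forces the trace of $u^-$ to be small, contradicting $\|(xy)^-\|_{L^2(\partial B_1)}>0$.
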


\begin{proof} We argue by contradiction. If the thesis in~\eqref{JH86:l}
were false, there would exist a sequence of
admissible pairs~$(u_j,E_j)$ such that~$u_j-\tilde u\in H^1_0(B_1)$,
\begin{equation*}
\int_{B_1}|\nabla u_j(X)|^2\,dX \le c_o\end{equation*}
and
\begin{equation}\label{JH86:l:BIS}
\Per_\sigma(E_j,B_1)\le \frac{1}{j}.\end{equation}
Thus, by compactness,
(see e.g. Corollary 3.49 in~\cite{AFP} for the case~$\sigma=1$
or Theorem~7.1 in~\cite{guida} for the case~$\sigma\in(0,1)$),
we conclude that, up to subsequences, $u_j$ converges to some~$u_\infty$ weakly
in~$H^1(B_1)$ and strongly in~$L^2(B_1)$, 
with
\begin{equation}\label{12J9:lkjuio99}
u_\infty-\tilde u\in H^1_0(B_1)
,\end{equation}
and~$\chi_{E_j}$
converges to some~$\chi_{E_\infty}$ strongly in~$L^1(B_1)$, as~$j\to+\infty$.
Accordingly, by the lower semicontinuity of the
(either classical or fractional) perimeter
(or Fatou Lemma, see e.g.
Proposition~3.38(b) in~\cite{AFP} for the case~$\sigma=1$) we deduce
from~\eqref{JH86:l:BIS} that
$$ \Per_\sigma(E_\infty,B_1)=0.$$
Hence, from the relative isoperimetric inequality
(see e.g. 
Lemma 2.5 in~\cite{ruffini} when~$\sigma\in(0,1)$ and
formula~(12.46) in~\cite{maggi} when~$\sigma=1$),
$$ \min\left\{ |B_1\cap E_\infty|^{\frac{2-\sigma}{2}},
|B_1\setminus E_\infty|^{\frac{2-\sigma}{2}}\right\}\le \hat C\;
\Per_\sigma(E_\infty,B_1)=0,$$
for some~$\hat C>0$.
Thus, we can suppose that
\begin{equation}\label{1Jh7:L}
|B_1\cap E_\infty|=0,\end{equation}
the case~$|B_1\setminus E_\infty|=0$ being similar.
Also, in virtue of Lemma~\ref{ADM:L}, we have that
$u_\infty\ge0$ a.e. in~$E_\infty$ and~$u_\infty\le0$ a.e. in~$E^c_\infty$.
Thus, by~\eqref{1Jh7:L}, we obtain that~$u_\infty\le0$ a.e. in~$B_1$.
Looking at a neighborhood of~$\partial B_1$
in the first quadrant,
we obtain that this is in contradiction with~\eqref{12J9:lkjuio99},
thus proving the desired result.
\end{proof}

By scaling Lemma~\ref{PER:1}, we obtain:

\begin{lemma}\label{JK:poi:R}
Let~$c_o>0$ and~$R>0$.
Let~$(u,E)$ be an admissible pair
in~$\R^2$. Assume that~$u-\tilde u\in H^1_0(B_R)$
and that
\begin{equation}\label{jhgu8:k}
\int_{B_R}|\nabla u(X)|^2\,dX \leq c_o\,R^4.
\end{equation}
Then there exists~$c>0$, possibly depending on~$c_o$,
such that
$$ \Per_\sigma(E,B_R)\ge cR^{2-\sigma}.$$
\end{lemma}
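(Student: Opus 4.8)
The plan is to reduce to Lemma~\ref{PER:1} by a rescaling that exploits the fact that $\tilde u(x,y)=xy$ is homogeneous of degree~$2$. Concretely, I would set
$$ u_R(X):=R^{-2}\,u(RX) \qquad\text{and}\qquad E_R:=R^{-1}E=\{X\in\R^2:\ RX\in E\}. $$
First I would check that $(u_R,E_R)$ is again an admissible pair in~$\R^2$: the conditions $u\ge0$ a.e.\ in~$E$ and $u\le0$ a.e.\ in~$E^c$ are plainly preserved under this rescaling, and $u_R$ inherits the $H^1$ regularity of~$u$. Moreover, since $\tilde u(RX)=(Rx)(Ry)=R^2\tilde u(X)$, we have $u_R-\tilde u=R^{-2}\big(u(R\,\cdot\,)-\tilde u(R\,\cdot\,)\big)$, so that $u_R-\tilde u\in H^1_0(B_1)$ because $u-\tilde u\in H^1_0(B_R)$.

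Next I would track the two scaling identities. For the Dirichlet energy, $\nabla u_R(X)=R^{-1}(\nabla u)(RX)$, so a change of variables together with hypothesis~\eqref{jhgu8:k} gives
$$ \int_{B_1}|\nabla u_R(X)|^2\,dX=R^{-4}\int_{B_R}|\nabla u(Y)|^2\,dY\le R^{-4}\cdot c_o R^4=c_o. $$
For the perimeter, I would use the homogeneity of~$\Per_\sigma$ under dilations: in~$\R^2$ one has $\Per_\sigma(\lambda A,\lambda U)=\lambda^{2-\sigma}\Per_\sigma(A,U)$ for every~$\lambda>0$; when $\sigma\in(0,1)$ this is immediate from~\eqref{9g12:LA} via the substitution $x\mapsto\lambda x$, $y\mapsto\lambda y$, and when $\sigma=1$ it is the classical scaling of the perimeter. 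Since $E=R\,E_R$ and $B_R=R\,B_1$, this yields $\Per_\sigma(E,B_R)=R^{2-\sigma}\Per_\sigma(E_R,B_1)$.

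Finally, applying Lemma~\ref{PER:1} to the admissible pair $(u_R,E_R)$ — which satisfies $u_R-\tilde u\in H^1_0(B_1)$ and $\int_{B_1}|\nabla u_R|^2\le c_o$ — provides a constant $c>0$, depending only on~$c_o$, with $\Per_\sigma(E_R,B_1)\ge c$. Combining this with the perimeter scaling identity gives $\Per_\sigma(E,B_R)=R^{2-\sigma}\Per_\sigma(E_R,B_1)\ge cR^{2-\sigma}$, which is the assertion.

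I expect no serious obstacle here: the argument is a clean dimensional-analysis rescaling. The only point requiring care is matching the exponents — in particular, that the degree-$2$ homogeneity of~$\tilde u$ is exactly what makes the rescaled Dirichlet bound $c_oR^4$ collapse to the bound~$c_o$ needed to invoke Lemma~\ref{PER:1}, and that both the cases $\sigma\in(0,1)$ and $\sigma=1$ obey the same dilation law $\lambda^{2-\sigma}$.
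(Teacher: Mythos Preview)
Your proposal is correct and essentially identical to the paper's own proof: the paper also rescales via $u_*(X):=R^{-2}u(RX)$ and $E_*:=E/R$, verifies admissibility and $u_*-\tilde u\in H^1_0(B_1)$ using the degree-$2$ homogeneity of $\tilde u$, checks the Dirichlet bound $\int_{B_1}|\nabla u_*|^2\le c_o$ by the same change of variables, and then applies Lemma~\ref{PER:1} together with the dilation law $\Per_\sigma(E,B_R)=R^{2-\sigma}\Per_\sigma(E_*,B_1)$.
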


\begin{proof} We set
$$ {\mbox{$u_*(X):= R^{-2} \,u(RX)$
and~$E_* := \frac{E}{R}:=\{ X/R {\mbox{ s.t. }}X\in E\}$.}}$$
Notice that~$R^{-2} \,\tilde u(RX) = R^{-2} \,(Rx)\,(Ry)=\tilde u(X)$,
therefore~$u_*-\tilde u\in H^1_0(B_1)$. Also, $(u_*,E_*)$
is an admissible pair.
In addition,
$$ \int_{B_1} |\nabla u_*(X)|^2\,dX
= R^{-2} \int_{B_1} |\nabla u(RX)|^2\,dX
= R^{-4} \int_{B_R} |\nabla u(Y)|^2\,dY \le c_o,$$
thanks to~\eqref{jhgu8:k}. As a consequence,
we are in the position of applying
Lemma~\ref{PER:1} to the pair~$(u_*,E_*)$ and thus we obtain that
$$ c\le \Per_\sigma(E_*,B_1) =
\Per_\sigma\left(\frac{E}{R},\frac{B_R}{R}\right)
= \frac{1}{R^{2-\sigma}}
\Per_\sigma(E,B_R),$$
as desired.
\end{proof}

\subsection{Analysis of minimizers in large balls}

Now we give a concrete example of a minimizer in~$B_R\subset \R^2$
for $R$ large enough. To this end,
we consider a monotone nondecreasing and lower semicontinuous
function~$\tilde\Phi:[0,+\infty)\to [0,+\infty)$, with
\begin{equation} \label{ART:tilde}
{\mbox{$\tilde\Phi(t)=1$ for any~$t\in[ 2,+\infty)$. }}\end{equation}
We let
$$ {\tilde{\mathcal{E}}}_\Omega(u,E):=
\int_\Omega |\nabla u(X)|^2\,dX + \tilde\Phi\Big( \PPer_\sigma (E,\Omega)\Big).$$
We remark that, in principle, the minimization
procedure in Lemma~\ref{EX} fails for this functional,
since the coercivity assumption~\eqref{COE}
is not satisfied by~$\tilde\Phi$. Nevertheless,
we will be able to construct explicitly a minimizer
for large balls of~${\tilde{\mathcal{E}}}$.
Then, we will modify~$\tilde\Phi$ at infinity and we
will obtain from it a minimizer for a functional
of the type in~\eqref{FUNCT}, with a coercive~$\Phi$. The details go
as follows.

\begin{proposition}\label{LARGE BALLS:0}
Let~$n=2$.
Let~$\tilde u$
and~$\tilde E$ be as in~\eqref{tilde u} and~\eqref{tilde E}.

Then, there exists~$R_o>0$, only depending on~$n$ and~$\sigma$,
such that if~$R\ge R_o$ then
\begin{equation}\label{h64r5678uytf}
{\tilde{\mathcal{E}}}_{B_R}(\tilde u,\tilde E)\le
{\tilde{\mathcal{E}}}_{B_R}(v,F),\end{equation}
for any admissible pair~$(v,F)$ such that~$v-\tilde u\in H^1_0(B_R)$
and~$F\setminus B_R=\tilde E\setminus B_R$, up to sets of measure zero.
\end{proposition}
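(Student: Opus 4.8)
The strategy is to show that for $R$ large, the energy $\tilde{\mathcal{E}}_{B_R}(v,F)$ of any competitor is bounded below by $\tilde{\mathcal{E}}_{B_R}(\tilde u,\tilde E)$, by exploiting the flat behaviour $\tilde\Phi\equiv 1$ on $[2,+\infty)$. The key point is that $(\tilde u,\tilde E)$ already realizes the value $\tilde\Phi\big(\PPer_\sigma(\tilde E,B_R)\big)=1$ (once we check $\PPer_\sigma(\tilde E,B_R)\ge 2$ for large $R$, which is immediate since the fractional/classical perimeter of $\tilde E$ inside $B_R$ grows like $R^{2-\sigma}$, and similarly the enlarged classical perimeter grows, so for $R\ge R_o$ the perimeter exceeds $2$), and that $\tilde u$ is harmonic in $B_R$, hence minimizes the Dirichlet energy among all functions with the same boundary trace. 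So for a competitor $(v,F)$:
\begin{equation*}
\tilde{\mathcal{E}}_{B_R}(v,F)=\int_{B_R}|\nabla v|^2 + \tilde\Phi\big(\PPer_\sigma(F,B_R)\big)
\ge \int_{B_R}|\nabla \tilde u|^2 + \tilde\Phi\big(\PPer_\sigma(F,B_R)\big),
\end{equation*}
using that $v-\tilde u\in H^1_0(B_R)$ and $\tilde u$ is harmonic in $B_R$. It then suffices to show $\tilde\Phi\big(\PPer_\sigma(F,B_R)\big)\ge 1 = \tilde\Phi\big(\PPer_\sigma(\tilde E,B_R)\big)$, i.e. essentially that $\PPer_\sigma(F,B_R)\ge 2$.

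First I would dispose of the case where $\PPer_\sigma(F,B_R)$ is \emph{large}: since $\tilde\Phi$ is monotone nondecreasing and equals $1$ on $[2,+\infty)$, if $\PPer_\sigma(F,B_R)\ge 2$ then $\tilde\Phi\big(\PPer_\sigma(F,B_R)\big)=1$ and we are done. The delicate case is when $\PPer_\sigma(F,B_R)<2$, which for $R$ large forces $F$ to be ``perimeter-small'' inside $B_R$. Here is where the contradiction-type argument enters, and this is the main obstacle. The plan is: by the ``clean cut'' Lemma~\ref{HJ:LEM1}, the condition $F\setminus B_R=\tilde E\setminus B_R$ ties the geometry of $F$ inside $B_R$; and by the scaled lower bound Lemma~\ref{JK:poi:R}, if in addition $\int_{B_R}|\nabla v|^2\le c_o R^4$ then $\Per_\sigma(F,B_R)\ge cR^{2-\sigma}$, which for $R$ large exceeds $2$ — contradiction. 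So the only remaining possibility is $\int_{B_R}|\nabla v|^2 > c_o R^4$; but in that case the Dirichlet energy of $v$ alone already dwarfs $\tilde{\mathcal{E}}_{B_R}(\tilde u,\tilde E)=\int_{B_R}|\nabla\tilde u|^2 + 1$, because $\int_{B_R}|\nabla\tilde u|^2 = \int_{B_R}(x^2+y^2)\,dX = O(R^4)$ with an explicit constant, so choosing $c_o$ larger than this constant makes $c_o R^4$ beat the reference energy for all $R\ge R_o$.

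Assembling these pieces: fix $c_o$ strictly larger than the explicit constant in $\int_{B_R}|\nabla\tilde u|^2 = c_\star R^4$, say $c_o := 2c_\star + 1$. Then choose $R_o$ (depending only on $n=2$ and $\sigma$) so large that (i) $\PPer_\sigma(\tilde E,B_R)\ge 2$ for $R\ge R_o$, and (ii) $c\,R^{2-\sigma}\ge 2$ for $R\ge R_o$, where $c=c(c_o)$ is from Lemma~\ref{JK:poi:R}. Now given a competitor $(v,F)$ with $R\ge R_o$: if $\int_{B_R}|\nabla v|^2 > c_o R^4$, then $\tilde{\mathcal{E}}_{B_R}(v,F)\ge c_o R^4 > c_\star R^4 + 1 = \tilde{\mathcal{E}}_{B_R}(\tilde u,\tilde E)$ and we are done. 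Otherwise $\int_{B_R}|\nabla v|^2\le c_o R^4$, so Lemma~\ref{JK:poi:R} applies (the hypothesis $v-\tilde u\in H^1_0(B_R)$ is given, and $F$ is fixed outside $B_R$ to agree with $\tilde E$) and gives $\Per_\sigma(F,B_R)\ge cR^{2-\sigma}\ge 2$; since $\PPer_\sigma(F,B_R)\ge \Per_\sigma(F,B_R)$ when $\sigma\in(0,1)$ (and for $\sigma=1$ one uses the clean-cut identity to compare $\PPer_\sigma(F,B_R)=\Per(F,(B_R)_\Upsilon)$ with $\Per(F,\overline{B_R})\ge\Per(F,B_R)$), we get $\PPer_\sigma(F,B_R)\ge 2$, hence $\tilde\Phi\big(\PPer_\sigma(F,B_R)\big)=1=\tilde\Phi\big(\PPer_\sigma(\tilde E,B_R)\big)$, and combined with the harmonicity inequality for the Dirichlet part this yields \eqref{h64r5678uytf}. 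The main technical care needed is the $\sigma=1$ bookkeeping relating $\PPer$ (perimeter in the enlarged domain $(B_R)_\Upsilon$) to the plain perimeter in $B_R$, which is handled by Lemma~\ref{HJ:LEM1} exactly as in the proof of Lemma~\ref{mAi86y}.
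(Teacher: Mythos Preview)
Your proposal is correct and follows essentially the same approach as the paper: both arguments hinge on the harmonicity of~$\tilde u$ for the Dirichlet part and on Lemma~\ref{JK:poi:R} for the perimeter lower bound, together with the flatness of~$\tilde\Phi$ on~$[2,+\infty)$. The only cosmetic difference is that the paper argues by contradiction (assuming a strictly better competitor and deducing~$\int_{B_R}|\nabla v|^2\le 2C_1 R^4$ from that), whereas you split directly into the two cases~$\int_{B_R}|\nabla v|^2> c_o R^4$ and~$\le c_o R^4$; these are logically equivalent reorganizations of the same idea. One minor simplification: for~$\sigma=1$ you do not need the clean-cut lemma to get~$\PPer_\sigma(F,B_R)\ge\Per(F,B_R)$, since~$\PPer_\sigma(F,B_R)=\Per(F,B_{R+\Upsilon})\ge\Per(F,B_R)$ by plain monotonicity of the Gauss--Green measure.
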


\begin{proof} We observe that~$\nabla \tilde u(x,y)=(y,x)$, and so
\begin{equation}\label{PO:K7tf456-1}
\int_{B_R}
|\nabla \tilde u(X)|^2\,dX =
\int_{B_R} |X|^2\,dX \le C_1 R^4,\end{equation}
for some~$C_1>0$.
Moreover, since~$\tilde E$ is a cone, we have that~$\tilde E= R\tilde E$,
thus
$$ \Per_\sigma (\tilde E,B_R) =\Per_\sigma (R \tilde E,\,R B_1)
=C_2 R^{2-\sigma},$$
for some~$C_2>0$. In particular, if~$R\ge (2/C_2)^{\frac{1}{2-\sigma}}$,
we have that
$$ \PPer_\sigma (\tilde E,B_R)\ge
\Per_\sigma (\tilde E,B_R)\ge 2$$ 
and then, by~\eqref{ART:tilde},
\begin{equation}\label{923d61a1}
\tilde\Phi\Big( \PPer_\sigma (\tilde E,B_R) \Big)=1.\end{equation}
This and~\eqref{PO:K7tf456-1} imply that
\begin{equation}\label{L7yTYh:KJH89}
{\tilde{\mathcal{E}}}_{B_R}(\tilde u,\tilde E) \le C_1 R^4 + 1
\le 2C_1 R^4,\end{equation}
if~$R$ is large enough.

Now suppose, by contradiction,
that~\eqref{h64r5678uytf} is violated, i.e.
\begin{equation}\label{8uj111s} {\tilde{\mathcal{E}}}_{B_R}(\tilde u,\tilde E)>
{\tilde{\mathcal{E}}}_{B_R}(v,F),\end{equation}
for some competitor~$(v,F)$.
In particular, by~\eqref{L7yTYh:KJH89},
\begin{equation}\label{76111JK:KH:0} 
\int_{B_R} |\nabla v(X)|^2\,dX \le {\tilde{\mathcal{E}}}_{B_R}(v,F)
\le {\tilde{\mathcal{E}}}_{B_R}(\tilde u,\tilde E)\le 2C_1 R^4.\end{equation}
This says that formula~\eqref{jhgu8:k}
is satisfied by the pair~$(v,F)$
with~$c_o:=2C_1$, and so Lemma~\ref{JK:poi:R}
gives that
$$ \PPer_\sigma(F,B_R)\ge\Per_\sigma(F,B_R)\ge cR^{2-\sigma},$$
for some~$c>0$. In particular, for large~$R$, we have that
$$ \tilde\Phi\Big( \PPer_\sigma(F,B_R) \Big)=1$$
and therefore
\begin{equation}\label{UD5fG6} 
{\tilde{\mathcal{E}}}_{B_R}(v,F)=
\int_{B_R} |\nabla v(X)|^2\,dX +1.\end{equation}
On the other hand, since~$\tilde u$ is harmonic,
$$ \int_{B_R} |\nabla v(X)|^2\,dX\ge \int_{B_R} |\nabla \tilde u(X)|^2\,dX,$$
hence~\eqref{UD5fG6} and~\eqref{923d61a1}
give that
$$ {\tilde{\mathcal{E}}}_{B_R}(v,F)\ge \int_{B_R} |\nabla \tilde u(X)|^2\,dX+1
={\tilde{\mathcal{E}}}_{B_R}(\tilde u,\tilde E).$$
This is in contradiction with~\eqref{8uj111s} and so the desired
result is established.
\end{proof}

\begin{corollary}\label{LARGE BALLS}
Let~$n=2$.
Let~$\tilde u$
and~$\tilde E$ be as in~\eqref{tilde u} and~\eqref{tilde E}.
There exists~$K_o>2$ such that the following statement is true.
Assume that
\begin{equation}\label{8u16ysT7uII}
{\mbox{$\Phi(t)=1$ for any~$t\in[ 2,K_o]$. }}\end{equation}
Then, there exists~$R_o>0$ such that~$(\tilde u,\tilde E)$
is a minimal pair in~$B_{R_o}$.
\end{corollary}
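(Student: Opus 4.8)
The plan is to bootstrap the statement from Proposition~\ref{LARGE BALLS:0} by truncating the nonlinearity. Given~$\Phi$ as in the hypotheses of the corollary (so, in particular, monotone nondecreasing, lower semicontinuous, and equal to~$1$ on~$[2,K_o]$, with~$K_o>2$ still to be chosen), I would introduce
\begin{equation*}
\tilde\Phi:=\min\{\Phi,\,1\}.
\end{equation*}
Since~$\Phi$ is monotone nondecreasing, $\Phi(t)\le\Phi(2)=1$ for~$t\in[0,2]$ and~$\Phi(t)\ge\Phi(2)=1$ for~$t\ge2$; hence~$\tilde\Phi=\Phi$ on~$[0,2]$ and~$\tilde\Phi\equiv1$ on~$[2,+\infty)$. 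In particular~$\tilde\Phi$ is again monotone nondecreasing, lower semicontinuous (a finite minimum of lower semicontinuous functions is lower semicontinuous), valued in~$[0,+\infty)$, and satisfies~\eqref{ART:tilde}; moreover, and crucially, $\tilde\Phi\le\Phi$ everywhere.

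Next I would apply Proposition~\ref{LARGE BALLS:0} to this~$\tilde\Phi$: it yields a radius~$R_o>0$, depending only on~$n$ and~$\sigma$, such that~$(\tilde u,\tilde E)$ minimizes~${\tilde{\mathcal{E}}}_{B_{R_o}}$ among all admissible pairs fixed outside~$B_{R_o}$. As recorded inside the proof of that proposition, for~$R_o$ in this range one has~$\PPer_\sigma(\tilde E,B_{R_o})\ge\Per_\sigma(\tilde E,B_{R_o})\ge2$, and this quantity is finite (equal to a constant times~$R_o^{2-\sigma}$ when~$\sigma\in(0,1)$, and comparable to~$R_o$ when~$\sigma=1$). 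At this point I would \emph{define}
\begin{equation*}
K_o:=\max\big\{3,\ \PPer_\sigma(\tilde E,B_{R_o})\big\},
\end{equation*}
which is a finite number~$>2$, depending only on~$n$, $\sigma$ (and~$\Upsilon$); note there is no circularity, because~$R_o$ was already pinned down by Proposition~\ref{LARGE BALLS:0} independently of~$\Phi$. With this choice~$\PPer_\sigma(\tilde E,B_{R_o})\in[2,K_o]$, so the hypothesis~\eqref{8u16ysT7uII} gives~$\Phi\big(\PPer_\sigma(\tilde E,B_{R_o})\big)=1=\tilde\Phi\big(\PPer_\sigma(\tilde E,B_{R_o})\big)$, and therefore~${\mathcal{E}}_{B_{R_o}}(\tilde u,\tilde E)={\tilde{\mathcal{E}}}_{B_{R_o}}(\tilde u,\tilde E)$.

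Finally I would conclude by a comparison chain: $(\tilde u,\tilde E)$ is admissible in~$B_{R_o}$ with finite energy, and for every admissible pair~$(v,F)$ with~$v-\tilde u\in H^1_0(B_{R_o})$ and~$F\setminus B_{R_o}=\tilde E\setminus B_{R_o}$ up to null sets, using~$\tilde\Phi\le\Phi$ and then the minimality furnished by Proposition~\ref{LARGE BALLS:0},
\begin{equation*}
{\mathcal{E}}_{B_{R_o}}(v,F)\ge{\tilde{\mathcal{E}}}_{B_{R_o}}(v,F)\ge{\tilde{\mathcal{E}}}_{B_{R_o}}(\tilde u,\tilde E)={\mathcal{E}}_{B_{R_o}}(\tilde u,\tilde E),
\end{equation*}
which is precisely the assertion that~$(\tilde u,\tilde E)$ is a minimal pair in~$B_{R_o}$. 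There is no deep difficulty here; the only points needing care — and, I expect, the main (mild) obstacle — are getting the quantifiers in the right order (the radius~$R_o$ must be produced first, from Proposition~\ref{LARGE BALLS:0}, and only afterwards is~$K_o$ chosen large enough to contain the value~$\PPer_\sigma(\tilde E,B_{R_o})$), and checking that~$\tilde\Phi:=\min\{\Phi,1\}$ simultaneously lies below~$\Phi$ (so that passing from~${\mathcal{E}}$ to~${\tilde{\mathcal{E}}}$ can only lower the energy of competitors) and coincides with~$\Phi$ exactly at the perimeter value attained by~$(\tilde u,\tilde E)$ (so that no energy is lost for the candidate minimizer).
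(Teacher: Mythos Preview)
Your proof is correct and follows essentially the same route as the paper's own argument: truncate~$\Phi$ to a function~$\tilde\Phi$ that is constant equal to~$1$ on~$[2,+\infty)$, apply Proposition~\ref{LARGE BALLS:0} to obtain a universal~$R_o$, set~$K_o$ large enough to contain~$\PPer_\sigma(\tilde E,B_{R_o})$, and conclude via the chain~${\mathcal{E}}_{B_{R_o}}(v,F)\ge{\tilde{\mathcal{E}}}_{B_{R_o}}(v,F)\ge{\tilde{\mathcal{E}}}_{B_{R_o}}(\tilde u,\tilde E)={\mathcal{E}}_{B_{R_o}}(\tilde u,\tilde E)$. The only cosmetic differences are that the paper defines~$\tilde\Phi$ piecewise (rather than as~$\min\{\Phi,1\}$, which yields the same function since~$\Phi(2)=1$) and takes~$K_o:=\PPer_\sigma(\tilde E,B_{R_o})+3$ rather than your~$\max\{3,\PPer_\sigma(\tilde E,B_{R_o})\}$.
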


\begin{proof} We define
$$ \tilde\Phi(t):= \left\{
\begin{matrix}
\Phi(t) & {\mbox{ if }} t\in[0,2]\\
1 & {\mbox{ if }} t\in(2,+\infty).
\end{matrix}
\right. $$
Then we are in the setting of Proposition~\ref{LARGE BALLS:0}
and we obtain that there exists~$R_o>0$, 
only depending on~$n$ and~$\sigma$, such that~$(\tilde u,\tilde E)$
is a minimal pair for~${\tilde{\mathcal{E}}}_{B_{R_o}}$.
So we define
$$ K_o := \PPer_\sigma( \tilde E, B_{R_o}) + 3.$$
Notice that~$K_o$ only depends on~$n$ and~$\sigma$,
since so does~$R_o$, and~$\tilde u$ and~$\tilde E$
are fixed.

To complete the proof of the desired claim, we need
to show that~$(\tilde u,\tilde E)$
is a minimal pair for~${\mathcal{E}}_{B_{R_o}}$, as long as~\eqref{8u16ysT7uII}
is satisfied. For this, we remark that, since~$\Phi$ is monotone,
we have that~$\Phi(t)\ge\Phi(2)=1$, for any~$t\ge2$. As a consequence,
we get that~$\Phi(t)\ge \tilde\Phi(t)$ for any~$t\ge0$.
Therefore, if~$(v,F)$ is a competitor for~$(\tilde u,\tilde E)$,
we deduce from~\eqref{h64r5678uytf} that
\begin{equation}\label{87655ta22} 
{\tilde{\mathcal{E}}}_{B_{R_o}}(\tilde u,\tilde E)\le
{\tilde{\mathcal{E}}}_{B_{R_o}}(v,F) \le {\mathcal{E}}_{B_{R_o}}(v,F).\end{equation}
On the other hand,
\begin{equation}\label{53102}
\PPer_\sigma( \tilde E, B_{R_o}) \le K_o .\end{equation}
Moreover, we have that~$\tilde\Phi(t)=1=\Phi(t)$ if~$t\in(2,K_o]$.
Therefore, we get that~$\tilde\Phi=\Phi$ in~$[0,K_o]$
and thus, by~\eqref{53102},
$$ \tilde\Phi\Big( \PPer_\sigma( \tilde E, B_{R_o})\Big)=
\Phi\Big( \PPer_\sigma( \tilde E, B_{R_o})\Big).$$
By plugging this into~\eqref{87655ta22}, we conclude that
$$ {\mathcal{E}}_{B_{R_o}}(\tilde u,\tilde E)
={\tilde{\mathcal{E}}}_{B_{R_o}}(\tilde u,\tilde E)\le {\mathcal{E}}_{B_{R_o}}(v,F),$$
as desired.
\end{proof}

\subsection{Estimates in small balls}

Here, we show that the minimal pair
constructed in Corollary~\ref{LARGE BALLS}
in large balls does not remain minimal in small balls.

\begin{proposition}\label{SMALL BALLS}
Let~$n=2$.
Assume that
\begin{equation}\label{QDV78}
{\mbox{$\Phi(t) = t^\gamma$ for any~$t\in[0,1]$}}
\end{equation}
for some
\begin{equation}\label{gamma4}
\gamma\in\left(0,\;\frac{4}{2-\sigma}\right).\end{equation}
Let~$\tilde u$
and~$\tilde E$ be as in~\eqref{tilde u} and~\eqref{tilde E}.

Then there exists~$r_o>0$ such that if~$r\in(0,r_o]$ then
the pair~$(\tilde u,\tilde E)$ is not minimal in~$B_{r}$.
\end{proposition}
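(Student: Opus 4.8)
The plan is to produce, for every sufficiently small $r$, an admissible pair $(v,F)$ with $v-\tilde u\in H^1_0(B_r)$ and $F\setminus B_r=\tilde E\setminus B_r$ up to sets of measure zero, such that
\[
{\mathcal E}_{B_r}(v,F)<{\mathcal E}_{B_r}(\tilde u,\tilde E),
\]
which by definition means that $(\tilde u,\tilde E)$ is not minimal in $B_r$. The mechanism, following the heuristics of the Introduction, is a competition of scales: on one hand $\int_{B_r}|\nabla\tilde u|^2=\int_{B_r}|X|^2\,dX\asymp r^4$, while on the other hand, since $\tilde E$ is a cone, $\Per_\sigma(\tilde E,B_r)$ scales like $r^{2-\sigma}$ when $\sigma\in(0,1)$ (respectively like $r$ when $\sigma=1$, modulo the $\Upsilon$--enlargement in \eqref{PPP:D}), so by \eqref{QDV78} the perimeter term of the energy is $\asymp r^{\gamma(2-\sigma)}$, which by \eqref{gamma4} overwhelms $r^4$ as $r\to0$. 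Thus any competitor that strictly lowers the perimeter, at a Dirichlet cost still $O(r^4)$, wins. Note that one cannot keep $v=\tilde u$ and only modify the set: the sign conditions in the definition of admissible pair would then force $F=\tilde E$. Hence the real content is that the cross $\tilde E$ fails to minimize the (local or nonlocal) perimeter inside a ball.

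I build $(v,F)$ as follows. Pick a radial cut-off $\eta\in C^{0,1}(\R^2)$ with $\eta\equiv0$ on $B_{r/2}$, $\eta\equiv1$ outside $B_{3r/4}$, $0\le\eta\le1$ and $|\nabla\eta|\le C/r$, and set $v:=\eta\,\tilde u$. For the set, I leave $\tilde E$ untouched outside $B_{r/2}$ and replace it inside $B_{r/2}$ by a configuration of strictly smaller perimeter. When $\sigma=1$ this is explicit: inside $B_{r/2}$, substitute for the two crossing coordinate segments the two chords joining $(\tfrac r2,0)$ to $(0,\tfrac r2)$ and $(-\tfrac r2,0)$ to $(0,-\tfrac r2)$, so that $F\cap B_{r/2}$ is the union of the two circular caps cut off by these chords; this produces no measure-theoretic jump across $\partial B_{r/2}$, and replaces a relative boundary of length $2r$ by one of length $\sqrt2\,r$. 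When $\sigma\in(0,1)$, I take $F$ to be, inside $B_{r/2}$, a minimizer of $\Per_\sigma(\,\cdot\,,B_{r/2})$ among sets coinciding with $\tilde E$ outside $B_{r/2}$ (it exists by the direct method, cf.~\cite{CRS}), and I claim $\Per_\sigma(F,B_{r/2})<\Per_\sigma(\tilde E,B_{r/2})$ strictly: otherwise $\tilde E$ would minimize $\Per_\sigma(\,\cdot\,,B_{r/2})$ and, being a cone, would be a nonlocal minimal cone in $\R^2$, hence a half-space, which it is not (it has a corner at the origin). In either case $(v,F)$ is admissible: wherever $\eta>0$ one has $F=\tilde E$ and $\mathrm{sign}\,v=\mathrm{sign}\,\tilde u$, compatible with $\tilde E$; wherever $\eta\equiv0$ one has $v\equiv0$, compatible with any set; and outside $B_r$ the pair $(v,F)$ coincides with $(\tilde u,\tilde E)$. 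Finally $v-\tilde u=(\eta-1)\tilde u$ is supported in $\overline{B_{3r/4}}\Subset B_r$, so $v-\tilde u\in H^1_0(B_r)$, and $F\setminus B_r=\tilde E\setminus B_r$.

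Next the two estimates. Since $|\tilde u|\le|X|^2/2\le r^2/2$ and $|\nabla\tilde u|=|X|\le r$ on $B_r$, while $v\equiv0$ on $B_{r/2}$ and $|\nabla\eta|\le C/r$, one gets $|\nabla v|\le|\nabla\tilde u|+|\tilde u|\,|\nabla\eta|\le C'r$ on $B_r$, hence
\[
\int_{B_r}|\nabla v|^2\,dX\le C_1\,r^4 .
\]
For the perimeter, since $B_{r/2}\Subset B_r$ (and $B_{r/2}\Subset\Omega_\Upsilon=B_{r+\Upsilon}$ when $\sigma=1$) and $F=\tilde E$ outside $B_{r/2}$, the ``clean cut'' identity \eqref{ssJK:1qs:BIS:2} of Lemma~\ref{HJ:LEM1} applies with $\Omega'=B_{r/2}$ and yields
\[
\PPer_\sigma(\tilde E,B_r)-\PPer_\sigma(F,B_r)=\Per_\sigma(\tilde E,\overline{B_{r/2}})-\Per_\sigma(F,\overline{B_{r/2}})=:\theta(r).
\]
By the construction $\theta(r)>0$, and by scaling $\theta(r)=\theta_0\,r^{2-\sigma}$ for a fixed $\theta_0=\theta_0(\sigma)>0$ (with exponent $2-\sigma=1$ and $\theta_0=2-\sqrt2$ when $\sigma=1$). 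Moreover, for $r$ small both $\PPer_\sigma(\tilde E,B_r)$ and $\PPer_\sigma(F,B_r)$ lie in $[0,1]$: when $\sigma\in(0,1)$ they are $\asymp r^{2-\sigma}\to0$; when $\sigma=1$ they equal $4(r+\Upsilon)$ and $4(r+\Upsilon)-(2-\sqrt2)r$ respectively, both $<1$ for small $r$ because $\Upsilon\le\tfrac1{100}$. Hence on these values $\Phi(t)=t^\gamma$ by \eqref{QDV78}.

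Putting this together, and using $\int_{B_r}|\nabla\tilde u|^2\ge0$,
\[
{\mathcal E}_{B_r}(\tilde u,\tilde E)-{\mathcal E}_{B_r}(v,F)\ge -C_1\,r^4+\big(\PPer_\sigma(\tilde E,B_r)\big)^\gamma-\big(\PPer_\sigma(F,B_r)\big)^\gamma .
\]
When $\sigma\in(0,1)$, writing $\PPer_\sigma(\tilde E,B_r)=C_\sigma r^{2-\sigma}$ and $\PPer_\sigma(F,B_r)=(C_\sigma-\theta_0)r^{2-\sigma}$ with $0\le C_\sigma-\theta_0<C_\sigma$, the last two terms equal $\big(C_\sigma^\gamma-(C_\sigma-\theta_0)^\gamma\big)r^{\gamma(2-\sigma)}=:c_\star r^{\gamma(2-\sigma)}$ with $c_\star>0$, so the right-hand side is $\ge r^{\gamma(2-\sigma)}\big(c_\star-C_1r^{\,4-\gamma(2-\sigma)}\big)$, which is positive for small $r$ since $4-\gamma(2-\sigma)>0$ by \eqref{gamma4}. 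When $\sigma=1$, the mean value theorem applied to $t\mapsto t^\gamma$ on the interval of length $(2-\sqrt2)r$ with endpoints $4(r+\Upsilon)-(2-\sqrt2)r$ and $4(r+\Upsilon)$ — both in $[4\Upsilon,1]$, where $t\mapsto\gamma t^{\gamma-1}$ is bounded below by a fixed positive constant — bounds the same difference below by $c_2 r$ with $c_2>0$, so the right-hand side is $\ge c_2 r-C_1r^4>0$ for small $r$. In either case ${\mathcal E}_{B_r}(\tilde u,\tilde E)>{\mathcal E}_{B_r}(v,F)$, so $(\tilde u,\tilde E)$ is not minimal in $B_r$; taking $r_o>0$ below the finitely many smallness thresholds used above gives the statement. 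The crux is the strict perimeter decrease when $\sigma\in(0,1)$: unlike the classical case, where two chords beat two crossing segments by an explicit amount, the cross is a critical point of the fractional perimeter (its nonlocal mean curvature vanishes identically along $\partial\tilde E$, by the $90^\circ$-rotation/complementation symmetry), so no elementary perturbation settles the sign and one must invoke the classification of planar nonlocal minimal cones (equivalently, the absence of singular points of nonlocal minimal surfaces in $\R^2$). Everything else is bookkeeping, together with the device — this is why $\eta$ is taken to vanish on all of $B_{r/2}$ rather than only near $\partial B_r$ — of making $v$ simultaneously admissible for the modified set $F$ and of Dirichlet energy $O(r^4)$.
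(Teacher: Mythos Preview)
Your proof is correct and follows essentially the same approach as the paper: construct a competitor $(v,F)$ by multiplying~$\tilde u$ by a radial cutoff that vanishes on an inner ball, and replacing~$\tilde E$ inside that inner ball by a set of strictly smaller (classical or fractional) perimeter, then exploit the scaling mismatch $r^{\gamma(2-\sigma)}$ vs.\ $r^4$ guaranteed by~\eqref{gamma4}. The paper frames this as a contradiction and works at unit scale before rescaling, whereas you argue directly at scale~$r$; for~$\sigma=1$ you give the explicit chord competitor, and for~$\sigma\in(0,1)$ you invoke the planar nonlocal minimal cone classification exactly as the paper does via~\cite{SV}; the endgame (Taylor expansion around~$\vartheta=4\Upsilon$ in the paper, mean value theorem on $[4\Upsilon,1]$ for you) is the same computation in different dress.
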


\begin{proof} We suppose, by contradiction,
that~$(\tilde u,\tilde E)$ is minimal in~$B_{r}$,
with~$r$ sufficiently small.

We observe that~$\tilde E$ is not a minimizer of
the perimeter in~$\overline{B_{1/2}}$ (see~\cite{SV}
for the case~$\sigma\in(0,1)$). Therefore there exists a perturbation~$E_\sharp$ of~$\tilde E$
inside~$\overline{B_{1/2}}$ for which
$$ \Per_\sigma(E_\sharp,\overline{B_{1/2}}) \le \Per_\sigma(\tilde E,\overline{B_{1/2}}) - a,$$
for some (small, but fixed)~$a>0$. As a consequence, recalling Lemma~\ref{HJ:LEM1},
\begin{equation}\label{WD6:PO}
\Per_\sigma(E_\sharp,B_{1}) -\Per_\sigma(\tilde E,B_{1})
=
\Per_\sigma(E_\sharp,\overline{B_{1/2}}) - \Per_\sigma(\tilde E,\overline{B_{1/2}})\le - a.\end{equation}
Now we take~$\psi \in C^\infty(\R^2,\,[0,1])$ such that~$\psi(X)=0$
for any~$X\in B_{3/4}$ and~$\psi(X)=1$ for any~$X\in B_{9/10}^c$.
We define
$$ u_\sharp(X)= u_\sharp(x,y):=\tilde u(X)\,\psi(X)
=xy\,\psi(x,y).$$
We claim that
\begin{equation}\label{QW11JH:1}
{\mbox{$u_\sharp\ge0$ a.e. in~$E_\sharp$ and~$u_\sharp\le0$ a.e. in~$E_\sharp^c$.}}
\end{equation}
To check this, we observe that~$u_\sharp=0$ in~$B_{3/4}$,
so it is enough to prove~\eqref{QW11JH:1}
for points outside~$B_{3/4}$. Then, we also remark that~$E_\sharp
\setminus B_{3/4} = \tilde E\setminus B_{3/4}$,
and, as a consequence,
we get that~$\tilde u\ge0$ a.e. in~$E_\sharp \setminus B_{3/4}$ 
and~$\tilde u\le0$ a.e. in~$E_\sharp^c \setminus B_{3/4}$.
Hence, since~$\psi\ge0$, we obtain that~$u_\sharp
\ge0$ a.e. in~$E_\sharp \setminus B_{3/4}$
and~$u_\sharp\le0$ a.e. in~$E_\sharp^c \setminus B_{3/4}$.
These observations complete the proof of~\eqref{QW11JH:1}.

Now we define
$$ u_r (X):= r^2 u_\sharp \left(\frac{X}{r}\right)=
xy\,\psi\left(\frac{X}{r}\right)=\tilde u(X)\,
\psi\left(\frac{X}{r}\right)$$
and
$$ E_r := r E_\sharp.$$
{F}rom~\eqref{QW11JH:1},
we obtain that~$u_r\ge0$ a.e. in~$E_r$ and~$u_r\le0$ a.e. in~$E_r^c$,
and thus~$(u_r,E_r)$ is an admissible pair.

Now we check that the data of~$(u_r,E_r)$ coincide with~$(\tilde u,\tilde E)$
outside~$B_r$. First of all, we have that~$\psi=1$
in~$B_{9/10}^c$, thus, if~$X\in B_{9r/10}^c$ we have that~$u_r(X)=\tilde u(X)$.
This shows that
\begin{equation}\label{AS5st}
u_r-\tilde u\in H^1_0(B_r).\end{equation}
Moreover,
\begin{eqnarray*}
&& E_r \setminus B_r =
\{ X\in B_r^c {\mbox{ s.t. }}
r^{-1} X\in E_\sharp \} \\
&&\qquad=
\{ X=rY {\mbox{ s.t. }}
Y \in E_\sharp\setminus B_1 \}
=
\{ X=rY {\mbox{ s.t. }}
Y \in \tilde E\setminus B_1 \}.
\end{eqnarray*}
Now, since~$\tilde E$ is a cone, we have that
$Y \in \tilde E$ if and only if~$rY \in \tilde E$,
and so, as a consequence,
$$ E_r \setminus B_r =
\{ X=rY\in \tilde E {\mbox{ s.t. }}
Y \in B_1^c \} = \tilde E\setminus B_r.$$
Using this and~\eqref{AS5st}, we obtain that,
if~$(\tilde u,\tilde E)$ is minimal in~$B_r$, then
\begin{equation}\label{D4600:0}
{\mathcal{E}}_{B_r} (\tilde u,\tilde E)\le
{\mathcal{E}}_{B_r} (u_r,E_r).\end{equation}
Now we remark that, since~$\tilde E$ is a cone,
\begin{equation} \label{612:0011}
\Per_\sigma (\tilde E,B_r) =\Per_\sigma (r\tilde E,\,r B_1)
=r^{2-\sigma} \,\Per_\sigma (\tilde E,\,B_1).\end{equation}
Now we define
$$ \vartheta:=\left\{
\begin{matrix}
4\Upsilon & {\mbox{ if }} \sigma=1,\\
0 & {\mbox{ if }} \sigma\in(0,1),
\end{matrix}
\right.$$
and we claim that
\begin{equation} \label{612:0012}
\PPer_\sigma (\tilde E,B_r) 
=r^{2-\sigma} \,\Per_\sigma (\tilde E,\,B_1)+\vartheta.\end{equation}
Indeed, if~$\sigma\in(0,1)$, then~\eqref{612:0012}
boils down to~\eqref{612:0011}. If instead~$\sigma=1$,
we use~\eqref{612:0011} in the following computation:
\begin{eqnarray*}
\PPer_\sigma (\tilde E,B_r) &=&
\Per (\tilde E,B_{r+\Upsilon})\\
&=& \Per (\tilde E,B_{r}) +\Per (\tilde E,B_{r+\Upsilon}\setminus B_r)\\
&=& r^{2-\sigma} \,\Per_\sigma (\tilde E,\,B_1)+ 4\Upsilon.
\end{eqnarray*}
This proves~\eqref{612:0012}.

{F}rom~\eqref{612:0012} we obtain that
\begin{equation}\label{D4600} {\mathcal{E}}_{B_r} (\tilde u,\tilde E) \ge
\Phi\Big( r^{2-\sigma} \,\Per_\sigma (\tilde E,\,B_1)+\vartheta\Big).\end{equation}
On the other hand, recalling~\eqref{WD6:PO},
we have that
\begin{equation}\label{8djj6yuUU:PRE}
\Per_\sigma (E_r,B_r)=
\Per_\sigma (r E_\sharp,B_r) = r^{2-\sigma}\Per_\sigma(E_\sharp,B_1)\le
r^{2-\sigma}\,\big( \Per_\sigma(\tilde E,B_1)-a\big).\end{equation}
Now we claim that
\begin{equation}\label{8djj6yuUU}
\PPer_\sigma (E_r,B_r)
\le r^{2-\sigma}\,\big( \Per_\sigma(\tilde E,B_1)-a\big)+\vartheta.\end{equation}
Indeed, if~$\sigma\in(0,1)$ then \eqref{8djj6yuUU} reduces to~\eqref{8djj6yuUU:PRE}.
If instead~$\sigma=1$ we use the fact that~$E_r$ coincides with~$\tilde E$
outside~$B_r$ and~\eqref{8djj6yuUU:PRE} to see that
\begin{eqnarray*}
\PPer_\sigma (E_r,B_r) &=& \Per(E_r,B_{r+\Upsilon})\\
&=& \Per(E_r,B_{r})+ \Per(E_r,B_{r+\Upsilon}\setminus B_r)
\\ &\le& r^{2-\sigma}\,\big( \Per_\sigma(\tilde E,B_1)-a\big)+4\Upsilon.
\end{eqnarray*}
This establishes~\eqref{8djj6yuUU}.

Then, the monotonicity of~$\Phi$ and \eqref{8djj6yuUU}
give that
\begin{equation}\label{GFj56111:a}
\Phi\Big( \PPer_\sigma (E_r,B_r)\Big)
\le \Phi\Big( r^{2-\sigma}\,\big( \Per_\sigma(\tilde E,B_1)-a\big)+\vartheta\Big).\end{equation}
Now we remark that  
$$ |\nabla u_r(X)|\le |\nabla \tilde u(X)\,\psi(X/r)|
+r^{-1}  |\tilde u(X)\,\nabla\psi(X/r)|
\le |X| + C r^{-1} |X|^2,$$
for some~$C>0$.
In consequence of this, and possibly renaming~$C>0$, we obtain
$$ \int_{B_r} |\nabla u_r(X)|^2\,dX
\le C \int_{B_r} \Big( |X|^2 + r^{-2} |X|^4\Big)\,dX \le Cr^4.$$
This and~\eqref{GFj56111:a}
give that
$$ {\mathcal{E}}_{B_r}(u_r,E_r)
\le Cr^4 + \Phi\Big( r^{2-\sigma}\,\big( \Per_\sigma(\tilde E,B_1)-a\big)+\vartheta\Big).$$
Putting together this, \eqref{D4600:0} and~\eqref{D4600}, we
conclude that
$$ \Phi\Big( r^{2-\sigma} \,\Per_\sigma (\tilde E,\,B_1)+\vartheta\Big)
\le Cr^4 + 
\Phi\Big( r^{2-\sigma}\,\big( \Per_\sigma(\tilde E,B_1)-a\big)+\vartheta\Big).$$
Thus, if~$r^{2-\sigma} \,\Per_\sigma (\tilde E,\,B_1)\le \frac{1}{2}$,
and so~$\Per_\sigma (\tilde E,\,B_1)+\vartheta\le1$,
we can use~\eqref{QDV78}
and obtain
\begin{equation}\label{LA65GT8}
\Big[ r^{2-\sigma} \,\Per_\sigma (\tilde E,\,B_1)+\vartheta \Big]^\gamma
\le Cr^4 +\Big[
r^{2-\sigma}\,\big( \Per_\sigma(\tilde E,B_1)-a\big)+\vartheta\Big]^\gamma.\end{equation}
Now we distinguish the cases~$\sigma\in(0,1)$ and~$\sigma=1$.
When~$\sigma\in(0,1)$ then~$\vartheta=0$ and so~\eqref{LA65GT8}
becomes
$$ r^{(2-\sigma)\gamma} \,\Big(\Per_\sigma (\tilde E,\,B_1)\Big)^\gamma
\le Cr^4 +
r^{(2-\sigma)\gamma}\,\Big( \Per_\sigma(\tilde E,B_1)-a\Big)^\gamma.$$
So we multiply by~$r^{(\sigma-2)\gamma}$ and we get
$$ a_*:=\Big(\Per_\sigma (\tilde E,\,B_1)\Big)^\gamma
-
\Big( \Per_\sigma(\tilde E,B_1)-a\Big)^\gamma \le Cr^{4+(\sigma-2)\gamma} .$$
Notice that~$a_*>0$ since so is~$a$,
and therefore the latter inequality gives a contradiction if~$r$
is small enough, thanks to~\eqref{gamma4}. This concludes the case
in which~$\sigma\in(0,1)$.

If instead~$\sigma=1$, then we have that~$\vartheta>0$ and so, for small~$t$,
we have that
$$ (t+\vartheta)^\gamma = \vartheta^\gamma +\gamma \vartheta^{\gamma-1} t + O(t^2).$$
Therefore, we infer from~\eqref{LA65GT8} that
$$ \vartheta^\gamma +\gamma \vartheta^{\gamma-1} \,
\, r^{2-\sigma} \,\Per_\sigma (\tilde E,\,B_1)
\le \vartheta^\gamma +\gamma \vartheta^{\gamma-1} \,
\,r^{2-\sigma}\,\big( \Per_\sigma(\tilde E,B_1)-a\big)+O(r^{4-2\sigma}).$$
Hence we simplify some terms and we divide by~$r^{2-\sigma}$,
to obtain
$$ a\le O(r^{2-\sigma}),$$
which gives a contradiction for small~$r>0$. This completes also the case~$\sigma=1$.
\end{proof}

\subsection{Completion of the proof of Theorem~\ref{EXAMPLE}}

The claim in Theorem~\ref{EXAMPLE} now follows
plainly by combining Corollary~\ref{LARGE BALLS}
and Proposition~\ref{SMALL BALLS}.

\section{Proof of Theorem~\ref{FBW}}\label{sec:equa}

The argument is a combination of a classical domain variation
(see e.g.~\cite{alt}) with an expansion of the (classical or
fractional) perimeter. 
Some similar perturbative methods appear, in the classical case,
for instance in~\cites{GL, CKL}.
Since the arguments
involved here use both standard and non-standard
observations, we give all the details for the facility of the reader.
First, we observe that
\begin{equation}\label{H5ed66}
\begin{split}
& {\mbox{the function }}\,
\Xi:=
\big( \partial_{\nu}^+ u (x)\big)^2
-\big( \partial_{\nu}^- u (x)\big)^2
- H_{\sigma}^E(x)\,\Phi'\Big(
\PPer_\sigma (E,\Omega)
\Big) \\ &\qquad{\mbox{ belongs to }}\; C(\partial E\cap\Omega),\end{split}\end{equation}
thanks to~\eqref{II51}, \eqref{II52} and
Proposition~6.3 in~\cite{I5} (to be used when~$\sigma\in(0,1)$).

Also, given a vector field~$V\in C^\infty(\R^n,\R^n)$
such that
\begin{equation}\label{7hhh}
{\mbox{$V(x)=0$ for any~$x\in\Omega^c$,}}\end{equation}
for small~$t\in\R$ we consider the ODE flow~$y=y(t;x)$ given
by the Cauchy problem
\begin{equation}\label{7hhh:2}
\left\{
\begin{matrix}
\partial_t y(t;x) = V(y(t;x)),\\
y(0;x)= x.
\end{matrix}
\right.\end{equation}
We remark that, for small~$t\in\R$,
\begin{equation}\label{8j234}
\begin{split}
y(t;x) \;&= x + t \,V(y(t;x)) + o(t)
\\ &= x + t \,V(x) + o(t). 
\end{split}
\end{equation}
Accordingly,
\begin{equation}\label{7yhtf66}
\begin{split}
D_x y(t;x) \;&= I + t \,DV(x) + o(t)
\\ &= I + t \,DV(y(t;x)) + o(t),
\end{split}\end{equation}
where~$I$ denoted the $n$-dimensional identity matrix.

Also, the map~$\R^n\ni x\mapsto y(t;x)$
is invertible for small~$t$, i.e. we can consider
the inverse diffeomorphism~$x(t;y)$. In this way,
\begin{equation}\label{2JK:JH56}
x\big( t;\,y(t;x)\big)=x \;{\mbox{ and }}\;
y\big( t;\,x(t;x)\big)=y
.\end{equation}
By~\eqref{8j234}, we know that
\begin{equation}\label{4.6bis}\begin{split}
x(t;y) =\;& y\big(t; x(t;y)\big)-t\,V\big(
y\big(t; x(t;y)\big)\big)+o(t) \\
=\;& y-t\,V(y)+o(t),\end{split}\end{equation}
and therefore
$$ D_y x(t;y) = I - t \,DV(y) + o(t).$$
In particular,
\begin{equation} \label{DET:8s}
\det D_y x(t;y) = 1-t\,{\rm{div}}\, V (y)+o(t).\end{equation}
Now, given a minimal pair~$(u,E)$ as in the statement
of Theorem~\ref{FBW}, we define
$$ u_t (y):= u( x(t;y) ).$$
We remark that the subscript~$t$ here above
does not represent a time derivative. By~\eqref{2JK:JH56},
we can write~$u(x)=u_t(y(t;x))$ and thus,
recalling~\eqref{7yhtf66},
\begin{equation}\label{7uh7765a4}
\begin{split} 
\nabla u(x)\;& = D_x y(t;x)\,\nabla u_t(y(t;x))\\
&= \nabla u_t(y(t;x)) + t\,DV(y(t;x))\,\nabla u_t(y(t;x)) +o(t).
\end{split}\end{equation}
Also, we consider the image of the set~$E$ under the
diffeomorphism~$y(t;\cdot)$, i.e. we define
$$ E_t := y(t;E).$$
We claim that
\begin{equation}\label{ADM:7t4}
{\mbox{the pair~$(u_t,E_t)$ is admissible.}}
\end{equation}
To check this, let~$y\in E_t$ (resp., $y\in E_t^c$).
Then there exists
\begin{equation}\label{6r5uu12}
{\mbox{$x\in E$ (resp., $x\in E^c$)}}\end{equation}
such that~$y=y(t;x)$.
Then, by~\eqref{2JK:JH56}, we have that
$$ x(t;y)=x\big( t;\,y(t;x)\big)=x.$$
This identity and~\eqref{6r5uu12} imply that
$$ 0\le u(x)=u(x(t;y))=u_t (y)
\qquad{\mbox{ (resp., $0\ge u_t(y)$).}}$$
{F}rom this, we obtain~\eqref{ADM:7t4}.

In addition, we recall that
\begin{equation}\label{uj7123}
{\mbox{$y(t;x)=x$
for any~$x\in\Omega^c$,}}\end{equation} thanks to~\eqref{7hhh}
and~\eqref{7hhh:2}. Therefore, we have that
\begin{equation}\label{uj7123:OMEGA}
y(t;\Omega)=\Omega.\end{equation}
Moreover, as a consequence of~\eqref{uj7123}
and of~\eqref{ADM:7t4}, and using the minimality of~$(u,E)$,
we have that
\begin{equation}\label{8uGGHK}
0\le {\mathcal{E}}_\Omega(u_t,E_t)-
{\mathcal{E}}_\Omega(u,E).
\end{equation}
Now we compute the first order in~$t$ of
the right hand side of~\eqref{8uGGHK}.
For this scope,
using, for instance,
formula~(6.3) (when~$\sigma=1$) or formula~(6.12) (when~$\sigma\in(0,1)$)
in~\cite{I5}, and recalling that~$V$ vanishes outside~$\Omega$,
one obtains that
\begin{equation}\label{HJ:6541sd}
\PPer_\sigma(E_t,\Omega)=\PPer_\sigma(E,\Omega)
+ t\int_{(\partial E)\cap\Omega}
H_{\sigma}^E(x)\, V(x)\cdot\nu(x)\,d{\mathcal{H}}^{n-1}(x) 
+o(t).\end{equation}
Here above, we denoted by~$\nu$
the exterior normal of~$E$ and by~${\mathcal{H}}^{n-1}$
the $(n-1)$-dimensional Hausdorff measure.

{F}rom~\eqref{HJ:6541sd}, we obtain that
\begin{equation}\label{HJ:6541sd:NEW}
\begin{split}
&\Phi\Big(\PPer_\sigma(E_t,\Omega)\Big)
= \Phi\left(
\PPer_\sigma(E,\Omega)
+t\int_{(\partial E)\cap\Omega}
H_{\sigma}^E(x)\, V(x)\cdot\nu(x)\,d{\mathcal{H}}^{n-1}(x) 
+o(t)\right) \\
&\quad=
\Phi\Big(
\PPer_\sigma(E,\Omega)\Big)
+t\,\Phi'\Big(
\PPer_\sigma(E,\Omega)\Big)
\,
\int_{(\partial E)\cap\Omega}
H_{\sigma}^E(x)\, V(x)\cdot\nu(x)\,d{\mathcal{H}}^{n-1}(x)
+o(t).
\end{split}
\end{equation}
Moreover, by~\eqref{7uh7765a4},
$$ |\nabla u(x)|^2= 
|\nabla u_t(y(t;x))|^2 + 2t\,
\nabla u_t(y(t;x))\cdot \Big(
DV(y(t;x))\,\nabla u_t(y(t;x)) \Big) +o(t). $$
Now we integrate this equation in~$x$ over~$\Omega$
and we use the change of variable~$y:=y(t;x)$.
In this way, recalling~\eqref{DET:8s}
and~\eqref{uj7123:OMEGA}, we see that
\begin{eqnarray*}
&&\int_\Omega |\nabla u(x)|^2\,dx \\ &=&
\int_\Omega \Big[ 
|\nabla u_t(y(t;x))|^2 + 2t\,
\nabla u_t(y(t;x))\cdot \Big(
DV(y(t;x))\,\nabla u_t(y(t;x)) \Big) \Big]\,dx +o(t)\\
&=&
\int_\Omega \Big[
|\nabla u_t(y)|^2 + 2t\,
\nabla u_t(y)\cdot \Big(
DV(y)\,\nabla u_t(y) \Big) \Big]\,
|\det D_y x(t;y)|\,dy +o(t)
\\ &=& 
\int_\Omega \Big[
|\nabla u_t(y)|^2 + 2t\,
\nabla u_t(y)\cdot \Big(
DV(y)\,\nabla u_t(y) \Big) \Big]\,\big[
1-t\,{\rm{div}}\, V (y)\big]\,dy+o(t)\\
&=&
\int_\Omega \Big[
|\nabla u_t(y)|^2 + 2t\,
\nabla u_t(y)\cdot \Big(
DV(y)\,\nabla u_t(y) \Big) -t\,|\nabla u_t(y)|^2\,{\rm{div}}\, V (y)\Big]
\,dy+o(t).\end{eqnarray*}
We write this formula as
\begin{equation}\label{YGFuaH}
\begin{split}
&\int_\Omega |\nabla u_t(y)|^2\,dy =
\int_\Omega |\nabla u(x)|^2\,dx
\\&\qquad+ t\,
\int_\Omega \Big[
|\nabla u_t(y)|^2\,{\rm{div}}\, V (y)
-2\,\nabla u_t(y)\cdot \Big(
DV(y)\,\nabla u_t(y) \Big)
\Big]\,dy+o(t).
\end{split}\end{equation}
Also, by~\eqref{7uh7765a4},
$$ \nabla u(x) = \nabla u_t(y(t;x)) + O(t),$$
and so, evaluating this expression at~$x:=x(t;y)$ and using~\eqref{4.6bis},
we get
$$ \nabla u_t(y)=\nabla u_t\big(y\big(t;x(t;y)\big)\big)=
\nabla u(x(t;y))+O(t)
=\nabla u(y)+O(t).$$
We can substitute this into~\eqref{YGFuaH}, thus obtaining
\begin{equation}\label{YGFuaH:0}
\begin{split}
&\int_\Omega |\nabla u_t(y)|^2\,dy =
\int_\Omega |\nabla u(x)|^2\,dx
\\&\qquad+ t\,
\int_\Omega \Big[
|\nabla u(y)|^2\,{\rm{div}}\, V (y)
-2\,\nabla u(y)\cdot \Big(
DV(y)\,\nabla u(y) \Big)
\Big]\,dy+o(t).
\end{split}\end{equation}
Now we define~$\Omega_1:=\Omega\cap \{u>0\}$ and~$\Omega_2:=\Omega\cap \{u<0\}$.
Notice that~$\Delta u=0$ in~$\Omega_1$ and in~$\Omega_2$, thanks to
Lemma~\ref{LE:AR}. Accordingly, 
in both~$\Omega_1$ and~$\Omega_2$
we have that
\begin{equation}\label{90:9ijh1}
{\rm{div}}\,\big( |\nabla u|^2\,V\big) =
|\nabla u|^2 {\rm{div}}\,V + 2V\cdot \big( D^2 u\,\nabla u\big)\end{equation}
and
\begin{equation}\label{90:9ijh2}
{\rm{div}}\,\big( (V\cdot\nabla u)\nabla u\big)
= \nabla (V\cdot\nabla u)\cdot \nabla u
= \nabla u\cdot (DV \nabla u) +V\cdot \big(D^2 u\,\nabla u\big).\end{equation}
So, we take the quantity in~\eqref{90:9ijh1}
and we subtract twice the quantity in~\eqref{90:9ijh2}:
in this way we see that, in both~$\Omega_1$ and~$\Omega_2$,
\begin{eqnarray*}
&& {\rm{div}}\,\big( |\nabla u|^2\,V\big)- 
2{\rm{div}}\,\big( (V\cdot\nabla u)\nabla u\big)\\
&=& 
|\nabla u|^2 {\rm{div}}\,V + 2V\cdot \big( D^2 u\,\nabla u\big)
-2\Big[
\nabla u\cdot (DV \nabla u) +V\cdot \big(D^2 u\,\nabla u\big)\Big]\\
&=& |\nabla u|^2 {\rm{div}}\,V -2\nabla u\cdot (DV \nabla u).
\end{eqnarray*}
We remark that the last expression is exactly the
quantity appearing in one integrand of~\eqref{YGFuaH:0}:
therefore we can write~\eqref{YGFuaH:0} as
\begin{equation}\label{YGFuaH:0:BIS}
\begin{split}
&\int_\Omega |\nabla u_t(y)|^2\,dy =
\int_\Omega |\nabla u(x)|^2\,dx
\\&\qquad+ t\,\sum_{ i\in\{1,2\} }
\int_{\Omega_i} \Big[
{\rm{div}}\,\big( |\nabla u(y)|^2\,V(y)\big)- 
2{\rm{div}}\,\big( \big(V(y)\cdot\nabla u(y)\big)\nabla u(y)\big)
\Big]\,dy+o(t).
\end{split}\end{equation}
Now we recall~\eqref{7u994a5}
and we notice that the exterior normal~$\nu_1$ of~$\Omega_1$ coincides
with~$\nu$, while the exterior normal~$\nu_2$
of~$\Omega_2$ coincides
with~$-\nu$. 
Furthermore, by~\eqref{II53}, 
we see that $\nu_1=-\frac{\nabla u}{|\nabla u|} = 
-\frac{\nabla u}{|\partial_{\nu}^+ u|}$ coming from~$\Omega_1$
and~$\nu_2=\frac{\nabla u}{|\nabla u|}=
\frac{\nabla u}{|\partial_{\nu}^- u|}$ coming from~$\Omega_2$.
Accordingly, coming from~$\Omega_1$, we have that
$$ \partial_{\nu_1} u =\nu_1\cdot \nabla u
= -\frac{\nabla u}{|\nabla u|}\cdot\nabla u
= - |\partial_{\nu}^+u|.$$
Similarly, coming from~$\Omega_2$,
$$ \partial_{\nu_2} u =\nu_2\cdot \nabla u=
\frac{\nabla u}{|\nabla u|}\cdot\nabla u
= |\partial_{\nu}^-u|.$$
Therefore, coming from~$\Omega_1$
$$ \nabla u \;\partial_{\nu_1} u = -|\nabla u|\;\partial_{\nu_1} u\;\nu_1
=|\partial_{\nu}^+u|^2 \;\nu,$$
and coming from~$\Omega_2$
$$ \nabla u\;\partial_{\nu_2} u = |\nabla u|\;\partial_{\nu_2} u\;\nu_2
=-|\partial_{\nu}^- u|^2 \;\nu.$$
Consequently, coming from~$\Omega_1$ we have that
$$ |\nabla u|^2\,V\cdot\nu_1
-2 (V\cdot\nabla u)\partial_{\nu_1} u
=|\partial_{\nu}^+u|^2 V\cdot\nu
- 2(V\cdot\nu)\,
|\partial_{\nu}^+u|^2 
= -|\partial_{\nu}^+u|^2 V\cdot\nu,$$
while, coming from~$\Omega_2$,
$$ |\nabla u|^2\,V\cdot\nu_2
-2 (V\cdot\nabla u)\partial_{\nu_2} u
= -|\partial_{\nu}^- u|^2 V\cdot\nu
+2 (V\cdot\nu)\,|\partial_{\nu}^- u|^2=
|\partial_{\nu}^- u|^2 V\cdot\nu.$$
Hence, if we apply the Divergence Theorem in~\eqref{YGFuaH:0:BIS},
we obtain
\begin{equation}\label{YGFuaH:0:TRIS}
\begin{split}
&\int_\Omega |\nabla u_t(y)|^2\,dy -\int_\Omega |\nabla u(x)|^2\,dx
\\=\;&t\,\sum_{ i\in\{1,2\} }
\int_{\partial\Omega_i} 
\Big[|\nabla u(y)|^2\,V(y)\cdot\nu_i(y)
-2 \big(V(y)\cdot\nabla u(y)\big)\partial_{\nu_i} u(y)
\Big]\,d{\mathcal{H}}^{n-1}(y)+o(t)
\\=\;&-t\,\int_{(\partial E)\cap\Omega}
|\partial_{\nu}^+u(y)|^2 \,V(y)\cdot\nu(y)\,d{\mathcal{H}}^{n-1}(y)\\
&\qquad\qquad+t\,\int_{(\partial E)\cap\Omega}
|\partial_{\nu}^-u(y)|^2 \,V(y)\cdot\nu(y)
\,d{\mathcal{H}}^{n-1}(y)+
o(t).
\end{split}\end{equation}
Using this and~\eqref{HJ:6541sd:NEW},
and also recalling the definition in~\eqref{H5ed66},
we conclude that
\begin{eqnarray*}
&& {\mathcal{E}}_\Omega(u_t,E_t)-{\mathcal{E}}_\Omega(u,E)\\
&=& \int_\Omega |\nabla u_t(y)|^2\,dy -\int_\Omega |\nabla u(x)|^2\,dx
+\Phi\Big(\PPer_\sigma(E_t,\Omega)\Big)-\Phi\Big(\PPer_\sigma(E,\Omega)\Big)\\
&=&
t\,\int_{(\partial E)\cap\Omega}
\Big( |\partial_{\nu}^-u(y)|^2 
-|\partial_{\nu}^+u(y)|^2\Big)
\,V(y)\cdot\nu(y)
\,d{\mathcal{H}}^{n-1}(y)
\\ &&\qquad+
t\,\Phi'\Big(
\PPer_\sigma(E,\Omega)\Big)
\,
\int_{(\partial E)\cap\Omega}
H_{\sigma}^E(x)\, V(x)\cdot\nu(x)\,d{\mathcal{H}}^{n-1}(x)
+o(t)\\
&=& -t\int_{(\partial E)\cap\Omega}
\Xi(x)\,V(x)\cdot\nu(x)\,d{\mathcal{H}}^{n-1}(x)
+o(t).
\end{eqnarray*}
This and~\eqref{8uGGHK} imply that
$$ \int_{(\partial E)\cap\Omega}
\Xi(x)\,V(x)\cdot\nu(x)\,d{\mathcal{H}}^{n-1}(x)
=0.$$
Since~$V$ is arbitrary, the latter identity
and~\eqref{H5ed66} imply that~$\Xi$ vanishes in the whole
of~$\partial E\cap\Omega$, which completes the proof of
Theorem~\ref{FBW}.

\section{Proof of Theorem~\ref{GFB}}\label{sec:growth}

\subsection{Energy of the harmonic replacement of
a minimal solutions}

We start with a computation on the harmonic replacement:

\begin{lemma}\label{HT:65:1}
Assume that~\eqref{LIPASS} holds true.
Let~$(u,E)$ be a minimal pair in~$\Omega$, with~$u\ge0$ a.e. in~$\Omega^c$
and~$B_{R_o}\Subset\Omega$.
Let~$R\in(0,R_o]$ and~$u_R$ be the function minimizing the Dirichlet energy in~$B_R$
among all the functions~$v$ such that~$v-u\in H^1_0(B_R)$. Then
$$ \int_{B_R} |\nabla u(x)-\nabla u_R(x)|^2\,dx
\le C\,L_Q\,R^{n-\sigma},$$
for some~$C>0$, possibly depending on~$R_o$, $n$ and~$\sigma$, and~$L_Q$
is the one introduced in~\eqref{LIPASS}.
\end{lemma}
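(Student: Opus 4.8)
The plan is to test the minimality of $(u,E)$ against the competitor $(u_R,F)$ obtained by keeping $E$ outside $B_R$ and filling in the whole ball, i.e.\ $F:=E\cup B_R$. First I would verify that this is an admissible pair with finite energy: since $B_{R_o}\Subset\Omega$ and $R\le R_o$ we have $B_R\Subset\Omega$, so $F\setminus\Omega=E\setminus\Omega$, and $u_R-u\in H^1_0(B_R)$ by definition of $u_R$. Outside $B_R$ we have $u_R=u$ and $F^c=E^c\setminus B_R\subseteq E^c$, so the sign conditions there are inherited from $(u,E)$; inside $B_R\subseteq F$ we only need $u_R\ge0$, and this holds by the maximum principle once $u\ge0$ in $B_R$. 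The inequality $u\ge0$ a.e.\ in $\Omega$ is part of the one-phase setting; if one prefers, it can be recovered from minimality, since the competitor $(u^+,E)$ is admissible ($u^+=0$ a.e.\ on $E^c$) and legitimate ($u\ge0$ on $\partial\Omega$ gives $u^+-u\in H^1_0(\Omega)$), and forces $\int_\Omega|\nabla u|^2=\int_\Omega|\nabla u^+|^2$, hence $u^-\equiv0$ in $\Omega$. Minimality of $(u,E)$ then gives $\mathcal{E}_\Omega(u,E)\le\mathcal{E}_\Omega(u_R,F)$.

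Next, since $\nabla u_R=\nabla u$ a.e.\ off $B_R$, and $u_R$ is harmonic in $B_R$ with $u-u_R\in H^1_0(B_R)$, the orthogonality $\int_{B_R}\nabla u_R\cdot\nabla(u-u_R)=0$ gives $\int_\Omega|\nabla u|^2-\int_\Omega|\nabla u_R|^2=\int_{B_R}|\nabla(u-u_R)|^2$; inserting this into the minimality inequality yields
\[\int_{B_R}|\nabla(u-u_R)|^2\;\le\;\Phi\big(\PPer_\sigma(F,\Omega)\big)-\Phi\big(\PPer_\sigma(E,\Omega)\big).\]
Thus it remains to bound the right-hand side. I would apply the ``Clean cut'' Lemma~\ref{HJ:LEM1} with $\Omega'=B_R$ --- legitimate because $B_R\Subset\Omega$, all perimeters in play are finite, and $F$ and $E$ coincide outside $\overline{B_R}$ --- to get $\PPer_\sigma(F,\Omega)-\PPer_\sigma(E,\Omega)=\Per_\sigma(F,\overline{B_R})-\Per_\sigma(E,\overline{B_R})$.

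To estimate $\Per_\sigma(F,\overline{B_R})$ I would use that $B_R\subseteq F$, so $F$ has no boundary inside the open ball: when $\sigma\in(0,1)$, formula~\eqref{9g12:LA} reduces $\Per_\sigma(E\cup B_R,\overline{B_R})$ to $L(B_R,E^c\setminus B_R)\le L(B_R,B_R^c)=\Per_\sigma(B_R,\R^n)$, and when $\sigma=1$ the reduced boundary of $E\cup B_R$ in $\overline{B_R}$ is contained in $\partial B_R$, so $\Per(E\cup B_R,\overline{B_R})\le\mathcal{H}^{n-1}(\partial B_R)=\Per(B_R,\R^n)$; in both cases $\Per_\sigma(F,\overline{B_R})\le R^{n-\sigma}\,\Per_\sigma(B_1,\R^n)$. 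Hence $\PPer_\sigma(F,\Omega)\le\PPer_\sigma(E,\Omega)+R^{n-\sigma}\Per_\sigma(B_1,\R^n)\le Q$ by~\eqref{LIBO}, while also $\PPer_\sigma(E,\Omega)\le Q$; since $\Phi$ is nondecreasing and $L_Q$-Lipschitz on $[0,Q]$,
\[\Phi\big(\PPer_\sigma(F,\Omega)\big)-\Phi\big(\PPer_\sigma(E,\Omega)\big)\;\le\;L_Q\big(\Per_\sigma(F,\overline{B_R})-\Per_\sigma(E,\overline{B_R})\big)_+\;\le\;L_Q\,\Per_\sigma(B_1,\R^n)\,R^{n-\sigma}.\]
Combining with the previous display gives $\int_{B_R}|\nabla(u-u_R)|^2\le C\,L_Q\,R^{n-\sigma}$ with $C=\Per_\sigma(B_1,\R^n)$, which depends only on $n$ and $\sigma$.

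Most of this is routine bookkeeping; the two points that need care are the admissibility of the competitor --- specifically getting $u_R\ge0$ in $B_R$, which is where the one-phase hypothesis is used --- and the correct application of Lemma~\ref{HJ:LEM1}, which converts the difference of the ``starred'' perimeters $\PPer_\sigma$ (computed, when $\sigma=1$, on the enlarged domain $\Omega_\Upsilon$) into a difference of ordinary $\sigma$-perimeters localized on $\overline{B_R}$, so that hypothesis~\eqref{LIBO} may be invoked for both $E$ and $F$ to keep the arguments of $\Phi$ inside the Lipschitz interval $[0,Q]$.
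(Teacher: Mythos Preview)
Your proof is correct and follows the same strategy as the paper: compare $(u,E)$ with the competitor $(u_R,E\cup B_R)$, use harmonicity of $u_R$ to convert the Dirichlet-energy gap into $\int_{B_R}|\nabla(u-u_R)|^2$, and bound the $\Phi$-difference by $L_Q\,\Per_\sigma(B_R,\R^n)$ via~\eqref{LIBO}. The only cosmetic differences are that the paper obtains $u\ge0$ by quoting Lemma~\ref{MAX:PLE} (whose case $a=0$ is exactly your $(u^+,E)$ argument), and that the paper bounds $\PPer_\sigma(E\cup B_R,\Omega)-\PPer_\sigma(E,\Omega)$ by subadditivity of the perimeter rather than by your route through Lemma~\ref{HJ:LEM1}; both lead to the same estimate $\le R^{n-\sigma}\Per_\sigma(B_1,\R^n)$.
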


\begin{proof} We observe that~$u\ge0$ a.e. in~$\R^n$, thanks to
Lemma~\ref{MAX:PLE}. Hence~$u_R\ge0$ a.e., by the classical maximum
principle, and therefore, taking~$u_R:=u$ in~$B_R^c$, we see that~$(u_R,E\cup B_R)$
is an admissible pair, and an admissible competitor against~$(u,E)$.
Therefore, by the minimality of~$(u,E)$,
\begin{equation} \label{789912df}
\begin{split}&
0\le {\mathcal{E}}_\Omega(u_R,E\cup B_R)-
{\mathcal{E}}_\Omega(u,E)
\\ &\quad= \int_{B_R} \big(|\nabla u_R(x)|^2-|\nabla u(x)|^2\big)\,dx
+ \Phi\Big( \PPer_\sigma(E\cup B_R,\Omega)\Big)
-\Phi\Big( \PPer_\sigma(E,\Omega)\Big) .\end{split}\end{equation}
Now we use the
subadditivity of the (either classical or fractional)
perimeter (see e.g.
Proposition~3.38(d) in~\cite{AFP} when~$\sigma=1$
and formula~(3.1) in~\cite{DFPV} when~$\sigma\in(0,1)$) and we
remark that 
\begin{equation}\label{7s13jq1}
\begin{split}& \PPer_\sigma(E\cup B_R,\Omega)\le
\PPer_\sigma(E,\Omega) +
\PPer_\sigma(B_R,\Omega)\le \PPer_\sigma(E,\Omega)+
\Per_\sigma(B_R,\R^n)\\ &\qquad= \PPer_\sigma(E,\Omega)+
R^{n-\sigma}
\Per_\sigma(B_1,\R^n) \le Q,\end{split}\end{equation}
in light of~\eqref{LIBO}.

Now we claim that
\begin{equation}\label{8usHH71}
\Phi\Big( \PPer_\sigma(E\cup B_R,\Omega)\Big)
-\Phi\Big( \PPer_\sigma(E,\Omega)\Big)
\le C\,L_Q\,R^{n-\sigma}.
\end{equation}
To prove it, we observe that if $\PPer_\sigma(E\cup B_R,\Omega)
\le\PPer_\sigma(E,\Omega)$ then, by the monotonicity of~$\Phi$
it follows that~$\Phi\Big( \PPer_\sigma(E\cup B_R,\Omega)\Big)
\le\Phi\Big( \PPer_\sigma(E,\Omega)\Big)$, which implies~\eqref{8usHH71}.
Therefore, we can assume that~$\PPer_\sigma(E\cup B_R,\Omega)
>\PPer_\sigma(E,\Omega)$. Then, by~\eqref{LIPASS}
(which can be utilized here in view of~\eqref{7s13jq1}), and using again the
subadditivity of the (either classical or fractional)
perimeter,
\begin{eqnarray*}&& \Phi\Big( \PPer_\sigma(E\cup B_R,\Omega)\Big)
-\Phi\Big( \PPer_\sigma(E,\Omega)\Big)\le L_Q\,\Big|
\PPer_\sigma(E\cup B_R,\Omega) -\PPer_\sigma(E,\Omega)
\Big|
\\ &&\qquad\le L_Q\, \PPer_\sigma(B_R,\Omega)
\le L_Q\, \Per_\sigma(B_R,\R^n)
\le C\,L_Q\,R^{n-\sigma}.\end{eqnarray*}
This proves~\eqref{8usHH71}.

By \eqref{8usHH71} and~\eqref{789912df} we obtain
\begin{eqnarray*}
C\,L_Q\,R^{n-\sigma} &\ge &
\int_{B_R} \big(|\nabla u(x)|^2-|\nabla u_R(x)|^2\big)\,dx\\
&=&
\int_{B_R} \big(\nabla u(x)+\nabla u_R(x)\big)
\cdot \big(\nabla u(x)-\nabla u_R(x)\big)\,dx
\\ &=&
\int_{B_R} \big(\nabla u(x)-\nabla u_R(x)+2\nabla u_R(x)\big)
\cdot \big(\nabla u(x)-\nabla u_R(x)\big)\,dx
\\ &=&
\int_{B_R} \big|\nabla u(x)-\nabla u_R(x)\big|^2\,dx
+2\int_{B_R} \nabla u_R(x)
\cdot \big(\nabla u(x)-\nabla u_R(x)\big)\,dx
\\ &=&\int_{B_R} \big|\nabla u(x)-\nabla u_R(x)\big|^2\,dx,
\end{eqnarray*}
where the latter equality follows from the fact that~$u_R$ is harmonic in~$B_R$.
The desired result is thus established.
\end{proof}

\begin{rem}
{F}rom Lemma~\ref{HT:65:1} 
it follows that the gradient of
the minimizers locally
belongs to the Campanato space~${\mathcal{L}}^{p,\lambda}$,
with~$p:=2$ and~$\lambda:=n-\sigma$, and thus to the Morrey
space~$L^{2,n-\sigma}$. This and the Poincar\'e inequality
would give that the minimizers belong to the
Campanato space~${\mathcal{L}}^{2,n+2-\sigma}$,
and thus to the H\"older space of continuous
functions with exponents~$\frac{(n+2-\sigma)-n}{2}=1-\frac\sigma2$.
In any case, in the forthcoming Section~\ref{sec:coro} 
we will provide an alternate approach to continuity
results.\end{rem}

\subsection{Estimate on the average of minimal solutions}

Now we estimate the average in balls for minimal solutions:

\begin{lemma}\label{GH:78hj}
Assume that~\eqref{LIPASS} holds true.
Let~$(u,E)$ be a minimal pair in~$\Omega$, with~$u\ge0$ a.e. in~$\Omega^c$
and~$B_{R_o}(p)\Subset\Omega$. Assume that~$R\in(0,R_o]$
and~$p\in{\mathcal{U}}_0$.
Then
$$ \frac{1}{|B_R(p)|} \int_{B_R(p)} u (x)\,dx
\le C\,\sqrt{L_Q}\, R^{1-\frac{\sigma}{2}},$$
for some~$C>0$, possibly depending on~$R_o$, $n$ and~$\sigma$, and~$L_Q$
is the one introduced in~\eqref{LIPASS}.
\end{lemma}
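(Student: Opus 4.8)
The idea is to prove a single dyadic iteration estimate for the solid averages of $u$ around any point $z$ to which Lemma~\ref{HT:65:1} can be applied, then to run that estimate at points $x_k\to p$ where $u$ is known to be small (these exist because $p\in{\mathcal U}_0$), and finally to transport the conclusion to $B_R(p)$ by a ball inclusion and a limit in $k$. At scale zero the solid average recovers the pointwise value of $u$ (Remark~\ref{RE:su}), and that value is small along the $x_k$; the errors accumulated along the dyadic scales form a convergent geometric series, which is exactly what makes the scheme close (with no circularity against the monotonicity of the averages).

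\textbf{The iteration inequality.} Fix $z$ with $B_{R_o}(z)\Subset\Omega$ and $r\in(0,R_o]$, and let $u_{r,z}$ be the harmonic replacement of $u$ in $B_r(z)$, i.e.\ the Dirichlet-energy minimizer with $u_{r,z}-u\in H^1_0(B_r(z))$, extended by $u$ outside. Lemma~\ref{HT:65:1}, applied to $B_r(z)$ (its proof applies verbatim to any such ball, the constant being translation invariant, and its hypotheses hold at every scale $\le R$ since $u\ge0$ a.e.\ in $\R^n$ as recorded in the proof of Lemma~\ref{HT:65:1}, and $\Per_\sigma(B_r,\R^n)$ is nondecreasing in $r$), gives $\int_{B_r(z)}|\nabla u-\nabla u_{r,z}|^2\,dx\le C\,L_Q\,r^{n-\sigma}$. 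Then the Poincaré inequality in $B_r(z)$ yields $\|u-u_{r,z}\|_{L^2(B_r(z))}\le C\sqrt{L_Q}\,r^{1+\frac{n-\sigma}{2}}$, and hence, by Cauchy--Schwarz, $\frac{1}{|B_r(z)|}\int_{B_r(z)}|u-u_{r,z}|\,dx\le C\sqrt{L_Q}\,r^{1-\frac{\sigma}{2}}$. Using that $u_{r,z}$ is harmonic in $B_r(z)$, so that $\frac{1}{|B_r(z)|}\int_{B_r(z)}u_{r,z}=u_{r,z}(z)=\frac{1}{|B_{r/2}(z)|}\int_{B_{r/2}(z)}u_{r,z}$, I obtain
\[
\frac{1}{|B_r(z)|}\int_{B_r(z)}u\,dx\;\le\;\frac{1}{|B_{r/2}(z)|}\int_{B_{r/2}(z)}u\,dx\;+\;C\sqrt{L_Q}\,r^{1-\frac{\sigma}{2}},
\]
with $C$ depending only on $R_o$, $n$ and $\sigma$.

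\textbf{Iteration and transport.} Applying the last display at the radii $r,\,r/2,\,r/4,\dots$ and summing the geometric series $\sum_{i\ge0}(r/2^{\,i})^{1-\frac{\sigma}{2}}$ (convergent because $1-\frac{\sigma}{2}>0$ for $\sigma\le1$), then letting the radius go to $0$ — at which point $\frac{1}{|B_\rho(z)|}\int_{B_\rho(z)}u\,dx\to u(z)$ by Remark~\ref{RE:su} and $u\ge0$ — I get
\[
\frac{1}{|B_r(z)|}\int_{B_r(z)}u\,dx\;\le\;u(z)+C\sqrt{L_Q}\,r^{1-\frac{\sigma}{2}}\qquad\text{whenever }B_{R_o}(z)\Subset\Omega,\ r\le R_o .
\]
Since $p\in{\mathcal U}_0$, choose $x_k\to p$ with $u(x_k)\to0$; for $k$ large one has $B_{R_o}(x_k)\Subset\Omega$ and $B_R(p)\subseteq B_{R+|x_k-p|}(x_k)$, whence
\[
\frac{1}{|B_R(p)|}\int_{B_R(p)}u\,dx\;\le\;\Big(\tfrac{R+|x_k-p|}{R}\Big)^{n}\Big(u(x_k)+C\sqrt{L_Q}\,(R+|x_k-p|)^{1-\frac{\sigma}{2}}\Big).
\]
Letting $k\to+\infty$, the right-hand side tends to $C\sqrt{L_Q}\,R^{1-\frac{\sigma}{2}}$, which proves the claim for $R<R_o$; the borderline case $R=R_o$ follows from the continuity of $R\mapsto\frac{1}{|B_R(p)|}\int_{B_R(p)}u$, itself a consequence of monotone convergence since $u\ge0$.

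\textbf{Main obstacle.} Granting Lemma~\ref{HT:65:1}, the computations above are routine; the one genuinely delicate point — and the reason for introducing ${\mathcal U}_0$ — is structural: nothing a priori bounds $u$ at $p$ itself, only the values $u(x_k)$ along the approximating sequence, so the estimate must be run at the $x_k$ and carried to $B_R(p)$ only at the end. The other point to get right is that the dyadic iteration must be telescoped all the way down to scale $0$ (where the average is identified with $u(x_k)$), rather than combined with the monotonicity of the averages, since the latter route is circular; it is precisely the summability of the errors $(r/2^{\,i})^{1-\sigma/2}$ that allows this.
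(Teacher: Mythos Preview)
Your proof is correct and shares the same architecture as the paper's: both use Lemma~\ref{HT:65:1} on balls centered at the approximating points~$x_k$, identify the limiting average at scale zero with~$u(x_k)$ via Remark~\ref{RE:su}, and only at the end pass from~$x_k$ to~$p$. The difference lies in the analytic core that converts the energy estimate into an average bound.

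The paper works continuously: it differentiates~$\psi_k(r)=r^{-n}\int_{B_r(p_k)}u$, observes that the harmonic replacement~$u_r$ contributes nothing to~$\psi_k'(r)$ (its normal flux over every sphere vanishes), and thus writes
\[
\psi_k'(r)=r^{-(n+1)}\int_{B_r(p_k)}(y-p_k)\cdot\big(\nabla u(y)-\nabla u_r(y)\big)\,dy,
\]
which H\"older and Lemma~\ref{HT:65:1} bound by~$C\sqrt{L_Q}\,r^{-\sigma/2}$; integrating in~$r$ from~$0$ to~$R$ gives the claim directly. You instead run a discrete dyadic scheme: Poincar\'e turns the~$H^1$ bound on~$u-u_{r,z}$ into an~$L^1$ average bound, the mean value property of~$u_{r,z}$ bridges scales~$r$ and~$r/2$, and summing the geometric series~$\sum(r2^{-i})^{1-\sigma/2}$ plays the role of the integral~$\int_0^R r^{-\sigma/2}\,dr$. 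The paper's route is marginally leaner (no Poincar\'e, no iteration), while yours is the standard Campanato-type argument and would adapt more readily to settings where differentiating the spherical average is awkward. Your transport step via the inclusion~$B_R(p)\subseteq B_{R+|x_k-p|}(x_k)$ is a clean alternative to the paper's explicit verification that~$\psi_k(R)\to\psi(R)$.

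One small remark: your aside that using the monotonicity of the averages would be ``circular'' is not quite the right diagnosis --- monotonicity (from subharmonicity) gives~$A(r)\ge A(r/2)$, which simply points the wrong way here, so it is unhelpful rather than circular. This does not affect the argument.
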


\begin{proof} By~\eqref{DEF:0}, we can take a sequence~$p_k$ with
\begin{equation}\label{PSILIM0}
\lim_{k\to+\infty} u(p_k)=0.
\end{equation}
For any~$r\in(0,R]$ and for any~$k\in\N$, we define
$$ \psi(r):= r^{-n} \int_{B_r(p)} u(x) \,dx
\quad{\mbox{ and }}\quad
\psi_k(r):= r^{-n} \int_{B_r(p_k)} u(x) \,dx
.$$
We observe that
\begin{equation}\label{PSILIM}
\lim_{k\to+\infty} \psi_k(r)=\psi(r).
\end{equation}
To check this, we let~$\bar R> R_o$, with~$B_{\bar R}(p)\Subset\Omega$
and we consider a continuous approximation of~$u$ in~$L^1(B_{\bar R}(p))$.
That is, we take continuous functions~$u_\epsilon$ such that
\begin{equation}\label{in epso} 
\lim_{\epsilon\searrow0} \int_{ B_{\bar R}(p) } \big|u(x)-u_\epsilon(x)\big|\,dx=0.\end{equation}
For large~$k$, we have that~$B_r(p_k)\subseteq B_{\bar R}(p)$, and so
\begin{eqnarray*}&&
r^n\,|\psi_k(r)-\psi(r)|\\ &=&
\left| \int_{B_r(p_k)} u(x) \,dx-\int_{B_r(p)} u(x) \,dx\right|\\
&\le&
\left| \int_{B_r(p_k)} u_\epsilon(x) \,dx-\int_{B_r(p)} u_\epsilon(x) \,dx\right|
+2\int_{ B_{\bar R}(p) } \big|u(x)-u_\epsilon(x)\big|\,dx\\
&=&
\left| \int_{B_r} \big( u_\epsilon(x+p_k)-u_\epsilon(x+p) \big)\,dx\right|
+2\int_{ B_{\bar R}(p) } \big|u(x)-u_\epsilon(x)\big|\,dx.
\end{eqnarray*}
Hence, taking the limit in~$k$ and using the Dominated Convergence Theorem,
we get that
\[
\lim_{k\to+\infty} r^n\,|\psi_k(r)-\psi(r)|\le
2\int_{ B_{\bar R}(p) } \big|u(x)-u_\epsilon(x)\big|\,dx.
\]
Then, we take the limit in~$\epsilon$ and we obtain~\eqref{PSILIM}
from~\eqref{in epso}, as desired.

Now, we recall that~$u\ge0$ a.e. in~$\R^n$, thanks to Lemma~\ref{MAX:PLE}.
Thus, by Remark~\ref{RE:su},
\begin{equation}\label{psi:0}
\psi_k(0):=\lim_{r\searrow0} \psi_k(r)=u(p_k) .\end{equation}
Furthermore, using polar coordinates,
\begin{equation}\label{8i67qart7}
\begin{split}
& \psi_k'(r) = \frac{d}{dr} \int_{B_1} u(p_k+ry)\,dy
= \int_{B_1} \nabla u(p_k+ry)\cdot y\,dy \\
&\qquad = \int_0^1 \left[ t^n
\int_{S^{n-1}} \nabla u(p_k+rt\omega)\cdot \omega\,d{\mathcal{H}}^{n-1}(\omega)\right]\,dt=
\int_0^1 \left[ t^n
\int_{\partial B_1} \partial_\nu u(p_k+rt\omega)
\,d{\mathcal{H}}^{n-1}(\omega)\right]\,dt,
\end{split}\end{equation}
where~$\nu$ is the exterior normal of~$B_1$.

Now, fixed~$k\in\N$,
we use the notation of Lemma~\ref{HT:65:1} for
the harmonic replacement~$u_r$ in~$B_r(p_k)\Subset\Omega$. 
For~$\rho\in(0,r]$,
we define~$v_r(x):= u_r(p_k+\rho x)$ and we observe that,
for any~$x\in B_1$, we have~$\Delta v_r(x)= \rho^2\Delta u_r(p_k+\rho x)
=0$, and so
$$ 0=\int_{B_1} \Delta v_r(x)\,dx
=\int_{\partial B_1} \partial_\nu v_r(\omega)\,d{\mathcal{H}}^{n-1}(\omega)
= \rho 
\int_{\partial B_1} \partial_\nu u_r(p_k+\rho\omega)\,d{\mathcal{H}}^{n-1}(\omega).$$
We take~$\rho:=rt$ and we insert this into~\eqref{8i67qart7}.
In this way, we obtain
$$ \psi_k'(r) = \int_0^1 \left[ t^n
\int_{\partial B_1} \Big( \partial_\nu u(p_k+rt\omega)-
\partial_\nu u_{r}(p_k+rt\omega)\Big)
\,d{\mathcal{H}}^{n-1}(\omega)\right]\,dt.$$
That is, using polar coordinate backwards and making the change
of variable~$y:=p_k+rx$,
\begin{eqnarray*}
\psi'_k(r)& =& \int_{B_1} x\cdot
\Big(\nabla u(p_k+rx)-
\nabla u_{r}(p_k+rx)\Big)\,dx\\
&=& r^{-(n+1)}
\int_{B_r(p_k)} (y-p_k)\cdot
\Big(\nabla u(y)-
\nabla u_{r}(y)\Big)\,dy.\end{eqnarray*}
Hence, using the H\"older Inequality and Lemma~\ref{HT:65:1},
$$ \psi'_k(r)\le 
r^{-n}
\int_{B_r(p_k)} \big|\nabla u(y)-
\nabla u_{r}(y)\big|\,dy\le
C\,r^{-\frac{n}{2}}
\sqrt{ \int_{B_r(p_k)} \big|\nabla u(y)-
\nabla u_{r}(y)\big|^2\,dy }
\le C \,\sqrt{L_Q}\; r^{-\frac{\sigma}{2}},$$
for some~$C>0$.
This and~\eqref{psi:0} give that
$$ \psi_k(R)-u(p_k)=\psi_k(R)-\psi_k(0)=\int_0^R \psi'_k(r)\,dr\le
C \,\sqrt{L_Q}\,\int_0^R r^{-\frac{\sigma}{2}}\le
C\,\sqrt{L_Q}\, R^{1-\frac{\sigma}{2}},$$
up to renaming constants.
Hence, making use of~\eqref{PSILIM0}
and~\eqref{PSILIM}, we find that
$$ \psi(R)\le
C\,\sqrt{L_Q}\, R^{1-\frac{\sigma}{2}},$$
that is the desired claim.\end{proof}

\subsection{Completion of the proof of Theorem~\ref{GFB}}

We recall that~$u\ge0$ a.e. in~$\R^n$, thanks to
Lemma~\ref{MAX:PLE}. 
In particular, $u$ is subharmonic, thanks to Lemma~\ref{HJ:AR678a},
and thus
\begin{equation}\label{KJ:78JJ1a2}
\frac{1}{|B_\rho|}\int_{B_{\rho}(x)} u (y)\,dy\ge u(x),
\end{equation}
for small~$\rho>0$.
Now we take~$x\in\Omega$, with~$|x|$ suitably small,
and we define~$R:=|x|$. Notice that~$B_{R}(x)\subseteq B_{2R}$ and therefore, since~$u\ge0$,
\begin{equation}\label{KJ:78JJ1a2:BIS}
\int_{B_{R}(x)} u (y)\,dy\le \int_{B_{2R}} u (y)\,dy.
\end{equation}
In addition, by applying Lemma~\ref{GH:78hj} in $B_{2R}$, we find that
$$ \frac{1}{R^n} \int_{B_{2R}} u (y)\,dy
\le C\,\sqrt{L_Q}\, R^{1-\frac{\sigma}{2}}.$$
As a result, exploiting~\eqref{KJ:78JJ1a2} and~\eqref{KJ:78JJ1a2:BIS},
\begin{eqnarray*}
u(x)\le \frac{C}{R^n}
\int_{B_{R}(x)} u (y)\,dy\le
\frac{C}{R^n}
\int_{B_{2R}} u (y)\,dy\le
C\,\sqrt{L_Q}\, R^{1-\frac{\sigma}{2}}=C\,\sqrt{L_Q}\, |x|^{1-\frac{\sigma}{2}},
\end{eqnarray*}
up to renaming constants. This proves
Theorem~\ref{GFB}.

\section{Proof of Corollary~\ref{COCO}}\label{sec:coro}

To prove Corollary~\ref{COCO}, it is useful to point out a strengthening of
Lemma~\ref{LE:AR}, in which one replaces the condition on the infimum
with a pointwise condition (this refinement is possible in virtue of
Theorem~\ref{GFB}):

\begin{lemma}\label{LE:AR:BIS}
Let the assumptions of Corollary~\ref{COCO} hold true.
Let~$(u,E)$ be a minimal pair in~$\Omega$, with~$u\ge0$. Let~$U\Subset\Omega$ be an open set
with~$u>0$ in~$U$. Then~$u$ is harmonic
in~$U$.
\end{lemma}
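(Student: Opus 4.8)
The plan is to upgrade Lemma~\ref{LE:AR}: I will show that, under the hypotheses of Corollary~\ref{COCO}, the pointwise positivity of $u$ on $U$ already forces $u$ to be bounded below by a positive constant near each point of $U$, so that Lemma~\ref{LE:AR} applies on small balls centred at the points of $U$. The only obstruction to such a local lower bound at a point $x_0$ is that $x_0$ belongs to the set $\mathcal{U}_0$ of~\eqref{DEF:0}, and the role of Theorem~\ref{GFB} is precisely to exclude this when $u(x_0)>0$.

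Concretely, the first step is to prove that $\mathcal{U}_0\cap U=\varnothing$. Assume by contradiction that $x_0\in\mathcal{U}_0\cap U$. Recall that $u\ge0$ a.e.\ in~$\R^n$ by Lemma~\ref{MAX:PLE}. Set $\tilde R_o:=\min\{R,\,\frac12\,\mathrm{dist}(\overline U,\partial\Omega)\}$, so that $B_{\tilde R_o}(x_0)\Subset\Omega$; moreover, since $\rho\mapsto\Per_\sigma(B_\rho,\R^n)=\rho^{n-\sigma}\Per_\sigma(B_1,\R^n)$ is nondecreasing and $\tilde R_o\le R$, the standing assumption $\PPer_\sigma(E,\Omega)+R^{n-\sigma}\Per_\sigma(B_1,\R^n)\le Q$ forces~\eqref{LIBO} to hold with radius~$\tilde R_o$. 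Translating so that $x_0$ takes the role of the origin (minimality, the sign condition $u\ge0$ on $\R^n\setminus\Omega$, condition~\eqref{LIBO}, and the relation $0\in\mathcal{U}_0$ in the translated coordinates are all inherited from the original data), Theorem~\ref{GFB} applied on $B_{\tilde R_o}(x_0)$ provides $C>0$ with
\[
u(x)\le C\sqrt{L_Q}\,|x-x_0|^{1-\frac\sigma2}\qquad\text{for every }x\in B_{\tilde R_o/2}(x_0).
\]
In particular $u(x)\to0$ as $x\to x_0$; since $u^+=u\ge0$ near $x_0$, the averages $|B_\epsilon|^{-1}\int_{B_\epsilon(x_0)}u^+$ are bounded by $C\sqrt{L_Q}\,\epsilon^{1-\sigma/2}$ and hence tend to~$0$, so Remark~\ref{RE:su} gives $u(x_0)=0$, contradicting $u>0$ on~$U$. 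Thus $\mathcal{U}_0\cap U=\varnothing$.

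The second step is immediate: fix $x_0\in U$. Since $x_0\notin\mathcal{U}_0$, the definition~\eqref{DEF:0} shows there is no sequence $x_k\to x_0$ with $u(x_k)\to0$; because $u\ge0$, it follows that $\inf_{B_\rho(x_0)}u>0$ for some $\rho>0$, which we may take small enough that $B_\rho(x_0)\subset U$. Lemma~\ref{LE:AR} then yields that $u$ is harmonic in $B_\rho(x_0)$. As $x_0\in U$ was arbitrary, $u$ is harmonic in~$U$.

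The step requiring the most care is the application of Theorem~\ref{GFB} at a general point $x_0\in U$ instead of at the origin: one must verify that the translated data satisfy all of the hypotheses of that theorem, i.e.\ that a ball of fixed radius around $x_0$ is compactly contained in~$\Omega$ (guaranteed by $U\Subset\Omega$) and that condition~\eqref{LIBO} survives for this radius (guaranteed by the monotonicity of $\rho\mapsto\Per_\sigma(B_\rho,\R^n)$ together with the standing bound $\PPer_\sigma(E,\Omega)+R^{n-\sigma}\Per_\sigma(B_1,\R^n)\le Q$). Once this is in place, the argument is a straightforward combination of Lemma~\ref{MAX:PLE}, Theorem~\ref{GFB}, Remark~\ref{RE:su} and Lemma~\ref{LE:AR}.
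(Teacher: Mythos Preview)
Your proof is correct and follows essentially the same route as the paper's: both arguments use Theorem~\ref{GFB} together with Remark~\ref{RE:su} to show that any point of~$\mathcal{U}_0$ must satisfy $u=0$, contradicting the hypothesis $u>0$ on~$U$, and then appeal to Lemma~\ref{LE:AR}. The only organizational difference is that the paper fixes an arbitrary $U'\Subset U$ and shows $\inf_{U'}u>0$ directly, whereas you first prove $\mathcal{U}_0\cap U=\varnothing$ and then extract a local positive lower bound at each point; your version is also more explicit in checking the hypotheses of Theorem~\ref{GFB} at the translated point.
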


\begin{proof} Let~$U'\Subset U$ be open. The claim is proved if we show that~$u$ is
harmonic in~$U'$.
To this aim,
we claim that
\begin{equation}\label{INFBUO}
\inf_{U'} u>0.
\end{equation}
We argue for a contradiction, assuming that this infimum is equal to~$0$.
Then, recalling~\eqref{DEF:0}, we have that there exists~$x_\star\in\overline{U'}\cap
{\mathcal{U}}_0$. In particular, since~$x_\star\in \overline{U'}\subset U$,
we know that
\begin{equation}\label{1CFA12394}
u(x_\star)>0.
\end{equation}
On the other hand, by Theorem~\ref{GFB}, for small~$y$,
$$ u(x_\star+y)\le 
C\,\sqrt{L_Q}\, |y|^{1-\frac{\sigma}{2}}.$$
As a result, recalling Remark~\ref{RE:su},
$$ u(x_\star)=u^+(x_\star)=
\lim_{\epsilon\searrow0}
\frac{1}{|B_\epsilon|} \int_{B_\epsilon } u^+(x_\star+y)\,dy
\le C\,\sqrt{L_Q}\,\lim_{\epsilon\searrow0}
\frac{1}{|B_\epsilon|} \int_{B_\epsilon } |y|^{1-\frac{\sigma}{2}}\,dy=0.$$
This is in contradiction with~\eqref{1CFA12394}
and so we have proved~\eqref{INFBUO}.

Then, in light of~\eqref{INFBUO}, we fall under the assumptions
of Lemma~\ref{LE:AR}, which in turn implies the desired claim.
\end{proof}

First we recall that~$u\ge0$ a.e. in~$\R^n$, thanks to
Lemma~\ref{MAX:PLE}.
Also we know that~$u$
is subharmonic in~$\Omega$ (recall Lemma~\ref{HJ:AR678a})
and therefore, by the classical maximum principle,
\begin{equation}\label{usu8}
u(x)\le M\end{equation}
for any~$x\in\Omega$.
Also, we may suppose that
\begin{equation}\label{usqu567aa8}
{\mbox{there exists~$q_o\in B_{3R/10}$ such that~$u(q_o)=0$.}}
\end{equation}
Indeed, if this does not hold, then~$u$ is harmonic in~$B_{3R/10}$,
due to Lemma~\ref{LE:AR:BIS}, and thus
$$ \sup_{B_{R/4}} |\nabla u|\le \frac{C}{R} \sup_{B_{3R/10}} u
\le \frac{CM}{R},$$
for some~$C>0$, where we also used~\eqref{usu8} in the latter inequality.
This implies that
$$ |u(x)-u(y)|\le \frac{CM}{R}\,|x-y|\le
\frac{CM}{R^{1-\frac\sigma2}} \,|x-y|^{1-\frac\sigma2},$$
which gives the desired result in this case.

Hence, from now on, we can suppose that~\eqref{usqu567aa8} holds true.
We fix~$x\ne y\in B_{R/4}$ and we define~$d(x)$ (resp.~$d(y)$)
to be the distance from~$x$ (resp. from~$y$) to the set~$\{u=0\}$.
By~\eqref{usqu567aa8}, we know that~$d(x)$, $d(y)\in [0,\,3R/5]$.
We distinguish two cases:
\begin{itemize}
\item[{\it Case 1}:] $|x-y|\ge \displaystyle\frac{\max\{d(x),\,d(y)\}}{2}$,
\item[{\it Case 2}:] $|x-y|<\displaystyle\frac{\max\{d(x),\,d(y)\}}{2}$.
\end{itemize}
First, we deal with {\it Case~1}.
In this case, we use Theorem~\ref{GFB}
and we have that
$$ |u(x)|\le C\,\sqrt{L_Q}\, (d(x))^{1-\frac{\sigma}{2}}
\; {\mbox{and }}\;
|u(y)|\le C\,\sqrt{L_Q}\, (d(y))^{1-\frac{\sigma}{2}}.$$
As a consequence,
$$ |u(x)-u(y)|\le |u(x)|+|u(y)|
\le  C\,\sqrt{L_Q}\, \Big( (d(x))^{1-\frac{\sigma}{2}}
+(d(y))^{1-\frac{\sigma}{2}}\Big).$$
Then, the assumption of {\it Case~1} implies
$$ |u(x)-u(y)|\le C\,\sqrt{L_Q}\,|x-y|^{1-\frac{\sigma}{2}},$$
up to renaming constants, which gives the desired result in this case.

Now we consider {\it Case~2}. In this case, up to exchanging $x$ and~$y$,
we have that
\begin{equation} \label{0HY:A}
0\le2|x-y|<d(x)=\max\{d(x),\,d(y)\}\end{equation}
and~$u>0$ in~$B_{d(x)}(x)$. Then, by Lemma~\ref{LE:AR:BIS},
we know that~$u$ is harmonic in~$B_{d(x)}(x)$ and thus
\begin{equation}\label{JH:jonbo}
\sup_{B_{9d(x)/10}(x)} |\nabla u|\le \frac{C}{d(x)}\,\sup_{B_{d(x)}(x)} u,\end{equation}
for some~$C>0$. 

Now, we prove that
\begin{equation}\label{JH:jonbo:2}
\sup_{B_{d(x)}(x)} u\le C\,\sqrt{L_Q}\,\big(d(x)\big)^{1-\frac{\sigma}{2}},
\end{equation}
for some~$C>0$.
For this, take~$\eta\in {B_{d(x)}(x)}$.
By construction, there exists~$\zeta\in \overline{B_{d(x)}(x)}$
such that~$u(\zeta)=0$. Accordingly, we have that~$|\eta-\zeta|\le
|\eta-x|+|x-\zeta|\le 2d(x)$, and then, by Theorem~\ref{GFB},
$$ u(\eta) \le  C\,\sqrt{L_Q}\, |\eta-\zeta|^{1-\frac{\sigma}{2}}
\le C\,\sqrt{L_Q}\,\big( d(x)\big)^{1-\frac{\sigma}{2}},$$
up to renaming~$C>0$, and this establishes~\eqref{JH:jonbo:2}.

Thus, exploiting~\eqref{JH:jonbo}
and~\eqref{JH:jonbo:2}, and possibly renaming constants, we obtain that
$$ \sup_{B_{9d(x)/10}(x)} |\nabla u|\le C\,\sqrt{L_Q}\,\big( d(x)\big)^{-\frac{\sigma}{2}}.$$
Notice now that~$y \in B_{d(x)/2}(x)\subset B_{9d(x)/10}(x)$, thanks to~\eqref{0HY:A},
therefore
$$ |u(x)-u(y)|\le C\,\sqrt{L_Q}\,\big( d(x)\big)^{-\frac{\sigma}{2}}\,
|x-y|\le 
C\,\sqrt{L_Q}\,|x-y|^{1-\frac{\sigma}{2}},$$
up to renaming constant. This establishes the desired result also in {\it Case~2}
and so the proof of
Corollary~\ref{COCO} is now completed.

\section{Proof of Theorem~\ref{DENS}}\label{sec:dens}

The proof is based on a measure theoretic argument
that was used, in different forms, in~\cites{CSV, serena1P},
but
differently from the proof in the existing literature,
we cannot use here the scaling properties of the functional:
namely, the existing proofs can always reduce to the unit
ball, since the rescaled minimal pair is a minimal
pair for the rescaled functional, while this procedure
fails in our case (as stressed for instance by Theorem~\ref{EXAMPLE}).
For this reason, we need to perform a measure theoretic
argument which works at every scale. To this goal,
for any~$r\in(0,R)$ we define
$$ V(r) := |B_r\setminus E| \;{\mbox{ and }}\;
a(r):={\mathcal{H}}^{n-1} \big( (\partial B_r)\setminus E\big)$$
and we observe that
\begin{equation}\label{8u5tUUa}
V(r)=
\int_0^r a(t)\,dt,\end{equation}
see e.g. formula (13.3) in~\cite{maggi}.

The proof of Theorem~\ref{DENS} is by contradiction:
we suppose that, for some~$r_o \in (0,R/2)$, we have
that
\begin{equation}\label{6t56YUB7uI}
V(r_o) = |B_{r_o}\setminus E|\le \delta r_o^n
\end{equation}
and we derive a contradiction if~$\delta>0$ is sufficiently small.
We recall that~$u\ge0$ a.e. in~$\R^n$, due to Lemma~\ref{MAX:PLE},
and we define
$$ A:= B_{r}\setminus E.$$
We observe that~$(u, E\cup A)$ is admissible,
since~$(E\cup A)^c = E^c \cap A^c \subseteq E^c$.
Then, by the minimality of~$(u,E)$, we obtain that
\begin{equation}\label{7HBFttg}
\begin{split}
0\;&\le {\mathcal{E}}_\Omega(u,E\cup A)-{\mathcal{E}}_\Omega(u,E)
\\ &= \Phi\Big( \PPer_\sigma (E\cup A,\Omega)\Big)
- \Phi\Big( \PPer_\sigma (E,\Omega)\Big).
\end{split}\end{equation}
Now, by the subadditivity of the (either classical or fractional)
perimeter (see e.g.
Proposition~3.38(d) in~\cite{AFP} when~$\sigma=1$ 
and formula~(3.1) in~\cite{DFPV} when~$\sigma\in(0,1)$), we have that
\begin{eqnarray*}
\PPer_\sigma (E\cup A,\Omega) &=& \PPer_\sigma (E\cup B_{r},\Omega)\\
&\le&
\PPer_\sigma (E,\Omega) + \PPer_\sigma (B_{r},\Omega)
\\ &\le&
\PPer_\sigma (E,\Omega) +
\Per_\sigma (B_{r},\R^n)
\\ &\le& \PPer_\sigma (E,\Omega) + R^{n-\sigma} \Per_\sigma (B_{1},\R^n).
\end{eqnarray*}
Then, both~$\PPer_\sigma (E,\Omega)$ and~$\PPer_\sigma (E\cup A,\Omega)$
are bounded by~$P$, as defined in~\eqref{12sKJ:9yuhh:P}
and so lie in the invertibility range of~$\Phi$, as prescribed by~\eqref{12sKJ:9yuhh}.
This observation and~\eqref{7HBFttg} imply that
\begin{equation}\label{FaD6fdsgha:PRE}
\PPer_\sigma (E,\Omega)\le \PPer_\sigma (E\cup A,\Omega).\end{equation}
Now we claim that
\begin{equation}\label{FaD6fdsgha} 
\Per_\sigma (E,\Omega)\le \Per_\sigma (E\cup A,\Omega).\end{equation}
Indeed, when~$\sigma\in(0,1)$ then~\eqref{FaD6fdsgha} is simply~\eqref{FaD6fdsgha:PRE}.
If instead~$\sigma=1$ we notice that~$E\setminus \overline{B_r}
=(E\cup A)\setminus \overline{B_r}$ and so we use~\eqref{ssJK:1qs:BIS},
\eqref{ssJK:1qs:BIS:2} and~\eqref{FaD6fdsgha:PRE} to obtain that
\begin{eqnarray*}
&& 0\le \PPer_\sigma (E\cup A,\Omega)-\PPer_\sigma (E,\Omega)
= \Per_\sigma (E\cup A,\overline{B_r})-\Per_\sigma (E,\overline{B_r})
\\ &&\qquad\qquad=\Per_\sigma (E\cup A,\Omega)-\Per_\sigma (E,\Omega),\end{eqnarray*}
which establishes~\eqref{FaD6fdsgha}.

Now we use the (either classical or fractional) isoperimetric inequality in the
whole of~$\R^n$ (see e.g. Theorem~3.46 in~\cite{AFP}
when~$\sigma=1$, and~\cite{frank}, or
Corollary~25 in~\cite{CVCV} when~$\sigma\in(0,1)$): in this way,
we have that
\begin{equation}\label{2525} \big( V(r)\big)^{\frac{n-\sigma}{n}}
= |B_r\setminus E|^{\frac{n-\sigma}{n}} = |A|^{\frac{n-\sigma}{n}}
\le C\,\Per_\sigma (A,\R^n),\end{equation}
for some~$C>0$.

Now we claim that, for a.e.~$r\in(0,R)$,
\begin{equation}\label{JJKJ:9jty}
\Per_\sigma (A,\R^n)\le \left\{
\begin{matrix}
C\, a(r) & {\mbox{ if }} \sigma=1,\\
C\,\displaystyle\int_0^r a(\rho) \,(r-\rho)^{-\sigma}\,d\rho
& {\mbox{ if }} \sigma\in(0,1),
\end{matrix}
\right.
\end{equation}
for some~$C>0$ (up to renaming~$C$).
First we prove~\eqref{JJKJ:9jty} when~$\sigma=1$.
For this, we 
write the perimeter of~$E$ in term of the
Gauss-Green measure~$\mu_E$ (see Remark 12.2 in~\cite{maggi}),
we use the additivity of the measures on disjoint sets
and we obtain that
\begin{equation}\label{J7:1}
\begin{split}
&\Per(E,B_r)+\Per(E,\Omega\setminus\overline{B_r})=
|\mu_E| (B_r) + |\mu_E|(\Omega\setminus\overline{B_r}) \\
&\qquad \le |\mu_E| (B_r) + |\mu_E|(\Omega\setminus{B_r})
=|\mu_E|(\Omega)=\Per(E,\Omega).
\end{split}
\end{equation}
Now we prove that, for a.e.~$r\in(0,R)$, we have
\begin{equation}\label{BGT7} 
{\mathcal{H}}^{n-1}\big( (\partial B_r)\setminus E\big)
= \Per(B_r\setminus E,\Omega)-\Per(E,B_r).\end{equation}
For this scope,
we make use of the property of the 
Gauss-Green measure with
respect to
the intersection with balls (see formula~(15.14)
in Lemma 15.12 of~\cite{maggi}, applied here to the complement of~$E$).
In this way, we see that
\begin{eqnarray*}
{\mathcal{H}}^{n-1}\big( (\partial B_r)\setminus E\big)
&=&{\mathcal{H}}^{n-1}\big( (\partial B_r)\cap E^c\cap\Omega\big)\\
&=&
{\mathcal{H}}^{n-1}\Big|_{ E^c \cap (\partial B_r)} (\Omega)
\\ &=&|\mu_{E^c\cap B_r}|(\Omega) - |\mu_{E^c}|\Big|_{B_r}(\Omega)\\
&=& \Per(E^c\cap B_r,\Omega) - |\mu_{E^c}| (B_r\cap\Omega)
\\&=& \Per(E^c\cap B_r,\Omega) - |\mu_{E^c}| (B_r)
\\&=&\Per(E^c\cap B_r,\Omega)-\Per(E^c,B_r).
\end{eqnarray*}
{F}rom this and the fact that~$\Per(E^c,B_r)=\Per(E,B_r)$
(see for instance
Proposition~3.38(d) in~\cite{AFP}),
we obtain that~\eqref{BGT7}
holds true.

Now we claim that, for a.e.~$r\in(0,R)$, we have
\begin{equation}\label{JR:MA}
\Per(E\cup B_r, \overline{B_r}) = 
{\mathcal{H}}^{n-1} \big( (\partial B_r)\setminus E\big).
\end{equation}
Since it is not easy to find a complete reference for
such formula in the literature, we try to give here an exhaustive proof.
To this goal, given a set~$F$ and~$t\in[0,1]$, we denote by~$F^{(t)}$ the set
of points of density~$t$ of~$F$ (see e.g. Example~5.17 in~\cite{maggi}), that is
$$ F^{(t)} := \left\{ x\in\R^n {\mbox{ s.t. }}
\lim_{r\to0}\frac{|F\cap B_r(x)|}{|B_r|}=t\right\}.$$
With this notation, we observe that~$B_r^{(0)} = \R^n\setminus\overline{B_r}$,
and thus
\begin{equation}\label{9hYIooP}
B_r^{(0)}\cap \overline{B_r}=\varnothing.
\end{equation}
We denote by~$\partial^*$ the
reduced boundary of
a set of locally finite perimeter (see e.g.
formula~(15.1) in~\cite{maggi}): we recall that for any~$x\in\partial^*E$
one can define the measure-theoretic outer unit normal to~$E$,
that we denote by~$\nu_E$.
We also recall that, by 
De Giorgi's Structure Theorem (see e.g. formula~(15.10) in~\cite{maggi}),
\begin{equation}\label{DG56HJJFG}
|\mu_E| ={\mathcal{H}}^{n-1}\Big|_{\partial^* E}. 
\end{equation}
We also set
$$ N_r := \{ x\in (\partial^* E)\cap(\partial B_r) {\mbox{ s.t. }}
\nu_E=\nu_{B_r} \}.$$
We claim that, for a.e.~$r\in(0,R)$,
\begin{equation}\label{34we3}
{\mathcal{H}}^{n-1} (N_r)=0.
\end{equation}
To check this, for any~$k\in\N$ we define
$$ \beta_k := 
\left\{ r\in(0,R) {\mbox{ s.t. }} {\mathcal{H}}^{n-1} (N_r) 
\ge \frac1k\right\}.$$
Then, if~$r\in\beta_k$, by~\eqref{DG56HJJFG}
we have that
$$ |\mu_E|(\partial B_r)
={\mathcal{H}}^{n-1}\Big|_{\partial^* E} (\partial B_r)
={\mathcal{H}}^{n-1}\big( (\partial^* E)\cap(\partial B_r)\big)
\ge {\mathcal{H}}^{n-1}(N_r) \ge\frac{1}{k}.$$
As a consequence, if~$r_1,\dots,r_j\in \beta_k$ and~$r\in(0,R)$, we obtain that
$$ \Per(E,B_R)=|\mu_E|(B_R)\ge |\mu_E|\left( \bigcup_{i=1}^j (\partial B_{r_i})\right)=
\sum_{i=1}^j |\mu_E|(\partial B_{r_i})\ge \frac{j}{k},$$
that is~$j\le k\,\Per(E,B_R)$.

This says that~$\beta_k$ has a finite (indeed less then~$k\,\Per(E,B_R)$)
number of elements. Thus the following set is countable (and so
of zero measure):
$$ \bigcup_{k=1}^{+\infty} \beta_k=
\left\{ r\in(0,R) {\mbox{ s.t. }} {\mathcal{H}}^{n-1} (N_r) 
>0\right\} = \left\{ r\in(0,R) {\mbox{ s.t. \eqref{34we3} does not hold}} \right\} .$$
This proves~\eqref{34we3}.

Now we use the known formula about the perimeter of the union.
For instance, exploiting formula~(16.12) of~\cite{maggi}
(used here with~$F=B_r$ and~$G:=\overline{B_r}$) we have that
$$ \Per(E\cup B_r,\overline{B_r})=
\Per(E, B_r^{(0)} \cap \overline{B_r})+
\Per(B_r , E^{(0)}\cap \overline{B_r})+
{\mathcal{H}}^{n-1}(N_r\cap \overline{B_r}).$$
In particular, using~\eqref{9hYIooP} and~\eqref{34we3}, we obtain that
\begin{equation}\label{H7GG:AHIAaa}
\Per(E\cup B_r,\overline{B_r})=\Per(B_r , E^{(0)}\cap \overline{B_r}),\end{equation}for a.e.~$r\in(0,R)$.
On the other hand, $B_r$ is a smooth set and so (see e.g. Example~12.6
in~\cite{maggi}) we have that
$$ \Per(B_r , E^{(0)}\cap \overline{B_r})=
{\mathcal{H}}^{n-1} \big( E^{(0)}\cap \overline{B_r}\cap(\partial B_r)\big)
= {\mathcal{H}}^{n-1} \big( E^{(0)}\cap(\partial B_r)\big),$$
and so \eqref{H7GG:AHIAaa} becomes
\begin{equation}\label{JK:1qd6shshs:aa} \Per(E\cup B_r,\overline{B_r})=
{\mathcal{H}}^{n-1} \big( E^{(0)}\cap(\partial B_r)\big).\end{equation}
Now we set
$$S:=\big( E^{(0)}\setminus E^c\big)\cup
\big( E^c\setminus E^{(0)}\big)$$ and we
remark that~$|S|=0$ (see e.g. formula~(5.19) in~\cite{maggi}).
Then, also~$|S\cap B_r|=0$.
Therefore (see e.g. Remark 12.4 in~\cite{maggi}) we get that~$\Per(S,\R^n)=0
=\Per(S\cap B_r,\R^n)$ and then
(see e.g. formula~(15.15) in~\cite{maggi})
for a.e.~$r\in(0,R)$ we obtain
$$ {\mathcal{H}}^{n-1}\big(S\cap (\partial B_r)\big)=
\Per(S\cap B_r,\R^n)-\Per(S,B_r) =0$$
and so, as a consequence, 
$$ {\mathcal{H}}^{n-1} \big( E^{(0)}\cap(\partial B_r)\big)
={\mathcal{H}}^{n-1} \big( E^{c}\cap(\partial B_r)\big).$$
Now we combine this and~\eqref{JK:1qd6shshs:aa}
and we finally complete the proof of~\eqref{JR:MA}.

Now we show that, for a.e.~$r\in(0,R)$,
\begin{equation}\label{J7:3}
\Per(E\cup B_r,\Omega) -\Per(E,\Omega\setminus\overline{B_r})=
\Per (B_r\setminus E,\Omega) -\Per(E,B_r).
\end{equation}
To prove this, we notice that~$(E\cup B_r)\setminus\overline{B_r}
=E\setminus\overline{B_r}$, and so we
use Lemma~\ref{HJ:LEM1} to see that
$$ \Per(E\cup B_r,\Omega) -\Per(E,\Omega) =
\Per(E\cup B_r,\overline{B_r})-\Per(E,\overline{B_r}).$$
As a consequence,
\begin{eqnarray*}
&& \Per(E\cup B_r,\Omega) -\Per(E,\Omega\setminus\overline{B_r})\\
&=& \Per(E\cup B_r,\overline{B_r})-\Per(E,\overline{B_r})+\Per(E,\Omega)
-\Per(E,\Omega\setminus\overline{B_r})\\
&=& \Per(E\cup B_r,\overline{B_r}) -|\mu_E|(\overline{B_r})
+|\mu_E|(\Omega)-|\mu_E|(\Omega\setminus\overline{B_r})\\
&=& \Per(E\cup B_r,\overline{B_r}),
\end{eqnarray*}
thanks to the additivity of the 
Gauss-Green measure~$\mu_E$. Then, we use~\eqref{JR:MA}
and we obtain that
$$ \Per(E\cup B_r,\Omega) -\Per(E,\Omega\setminus\overline{B_r})
={\mathcal{H}}^{n-1} \big( (\partial B_r)\setminus E\big).$$
Then, we exploit~\eqref{BGT7}
and we complete the proof of~\eqref{J7:3}.

Now we observe that, using~\eqref{BGT7}
and~\eqref{J7:3}, we obtain that, for a.e.~$r\in(0,R)$,
\begin{equation}\label{J7:2}
\Per (E\cup B_r,\Omega) =\Per (E,\Omega\setminus\overline{B_r})+
{\mathcal{H}}^{n-1}\big( (\partial B_r)\setminus E\big).
\end{equation}
Now, putting together~\eqref{J7:1} and~\eqref{J7:2}, and noticing
that~$E\cup B_r=E\cup A$, we have that
\begin{eqnarray*}
\Per(E,B_r)
&\le& \Per(E,\Omega) - \Per(E,\Omega\setminus\overline{B_r})\\
&=& \Per(E,\Omega) - \Per (E\cup B_r,\Omega) +
{\mathcal{H}}^{n-1}\big( (\partial B_r)\setminus E\big)\\&=&
\Per(E,\Omega) - \Per (E\cup A,\Omega) +
{\mathcal{H}}^{n-1}\big( (\partial B_r)\setminus E\big)
.\end{eqnarray*}
Therefore, recalling~\eqref{FaD6fdsgha} (used here with~$\sigma=1$),
we conclude that
\begin{equation} \label{LA2}
\Per(E,B_r)\le
{\mathcal{H}}^{n-1}\big( (\partial B_r)\setminus E\big).\end{equation}
Now we take~$r'\in(r,R)$ and we observe that~$B_r\Subset B_{r'}
\Subset \Omega$. Also, we see that~$
A\setminus\overline{B_{r'}}=\varnothing$,
thus, by Lemma~\ref{HJ:LEM1} (applied here with~$F:=\varnothing$),
$$ \Per(A,\R^n) =
\Per(A,\overline{B_{r'}})\le \Per(A,\Omega) =
\Per(B_r\setminus E,\Omega).$$
As a consequence of this and of~\eqref{J7:3}, we obtain
$$ \Per(A,\R^n)\le
\Per(E\cup B_r,\Omega) -\Per(E,\Omega\setminus\overline{B_r})
+\Per(E,B_r).$$
Hence, in light of~\eqref{J7:2} and~\eqref{LA2},
$$ \Per(A,\R^n)\le 2\,
{\mathcal{H}}^{n-1}\big( (\partial B_r)\setminus E\big) =2a(r).$$
This completes the proof of~\eqref{JJKJ:9jty} when~$\sigma=1$.

When~$\sigma\in(0,1)$, to prove~\eqref{JJKJ:9jty}
we use a modification of the argument contained in formulas~(5.8)--(5.12)
in~\cite{serena1P}. We first observe that 
$$ \Per_\sigma(E, \Omega)-\Per_\sigma(E\cup A, \Omega) = 
L(A,E) - L\big(A, (E\cup A)^c\big).$$
As a consequence, 
\begin{eqnarray*}
&& \Per_\sigma(A, \R^n) = L(A,A^c) = L(A,E) + L\big(A, (E\cup A)^c\big)\\
&&\qquad = 2 L\big(A, (E\cup A)^c\big) + \Per_\sigma(E, \Omega)-\Per_\sigma(E\cup A, \Omega). 
\end{eqnarray*}
This and~\eqref{FaD6fdsgha} give that 
\begin{equation}\label{agg234}
\Per_\sigma(A, \R^n)\le 2 L\big(A, (E\cup A)^c\big)\le 2 L(A, B_r^c).
\end{equation}
Now we recall that~$A\subseteq B_r$ and so, using the change
of coordinates~$\zeta:=x-y$, we obtain that
\begin{equation}\label{8.14:BIS}
\begin{split}
& L(A, B_r^c)
=\int_{A\times B_r^c}\frac{dx\,dy}{|x-y|^{n+\sigma}}
\le \int_{\{(x,\zeta)\in A\times\R^n {\mbox{ s.t. }}|\zeta|\ge r-|x|\} }
\frac{dx\,d\zeta}{|\zeta|^{n+\sigma}}\\
&\qquad\le C\,\int_A \left[\int_{r-|x|}^{+\infty} \frac{\rho^{n-1}\,d\rho}{
\rho^{n+\sigma}}
\right]\,dx
\le C\,\int_A \frac{dx}{(r-|x|)^\sigma}.\end{split}\end{equation}
Now we use the Coarea Formula (see e.g. Theorem~2
on page~117 of~\cite{EG}, applied here in codimension~$1$
to the functions~$f(x)=|x|$ and~$g(x):=\frac{\chi_A(x)}{(r-|x|)^\sigma}$), 
and we deduce that
\begin{eqnarray*}
&& \int_A \frac{dx}{(r-|x|)^\sigma} =\int_{\R}
\left[ \int_{\partial B_t} \frac{\chi_A(x)}{(r-|x|)^\sigma}
\,d{\mathcal{H}}^{n-1}(x) \right] \,dt
\\&&\qquad=\int_{0}^r
\left[ \int_{\partial B_t} \frac{\chi_{E^c}(x)}{(r-t)^\sigma}
\,d{\mathcal{H}}^{n-1}(x) \right] \,dt
= \int_{0}^r
\frac{ {\mathcal{H}}^{n-1} \big( E^c\cap(\partial B_t)\big)}{(r-t)^\sigma}\,dt
=\int_0^r \frac{a(t)}{(r-t)^\sigma}\,dt.\end{eqnarray*}
This and~\eqref{8.14:BIS} imply that
$$ L(A, B_r^c)\le C\,\int_0^r \frac{a(t)}{(r-t)^\sigma}\,dt.$$
Inserting this into~\eqref{agg234} we get 
$$ \Per_\sigma(A, \R^n)\le C\,\int_0^r \frac{a(t)}{(r-t)^\sigma}\,dt,$$
which gives the desired claim in~\eqref{JJKJ:9jty} when~$\sigma\in(0,1)$.

Using~\eqref{2525} and~\eqref{JJKJ:9jty}, and possibly renaming
constants, we conclude that, for a.e.~$r\in(0,R)$,
\begin{equation}\label{2626}
\big( V(r)\big)^{\frac{n-\sigma}{n}}
\le \left\{
\begin{matrix}
C\, a(r) & {\mbox{ if }} \sigma=1,\\
C\,\displaystyle\int_0^r a(\rho) \,(r-\rho)^{-\sigma}\,d\rho
& {\mbox{ if }} \sigma\in(0,1).
\end{matrix}
\right.
\end{equation}
Our next goal is to show that, for any~$t\in
\left[\frac14,\;\frac12\right]$,
we have that
\begin{equation}\label{2727}
\int_{r_o/4}^{tr_o} \big( V(r)\big)^{\frac{n-\sigma}{n}}\,dr
\le C t^{1-\sigma} r_o^{1-\sigma} \, V(tr_o),
\end{equation}
for some~$C>0$.
To prove this, we integrate~\eqref{2626}
in~$r\in \left[\frac{r_o}{4},\;tr_o\right]$.
Then, when~$\sigma=1$, we obtain~\eqref{2727}
directly from~\eqref{8u5tUUa}. If instead~$\sigma\in(0,1)$,
we obtain
\begin{eqnarray*}
&& \int_{r_o/4}^{tr_o} \big( V(r)\big)^{\frac{n-\sigma}{n}}\,dr
\le C \int_{r_o/4}^{tr_o}\left[
\int_0^r a(\rho) \,(r-\rho)^{-\sigma}\,d\rho\right]\,dr\\
&&\qquad \le C
\int_{0}^{tr_o}\left[
\int_\rho^{tr_o} a(\rho) \,(r-\rho)^{-\sigma}\,dr\right]\,d\rho
=\frac{C}{1-\sigma}\,
\int_{0}^{tr_o} a(\rho) (tr_o-\rho)^{1-\sigma} \,d\rho\\
&&\qquad \le \frac{C}{1-\sigma}\,
\int_{0}^{tr_o} a(\rho) (tr_o)^{1-\sigma} \,d\rho
= \frac{C\,(tr_o)^{1-\sigma}}{1-\sigma}\,V(tr_o),
\end{eqnarray*}
where we used~\eqref{8u5tUUa} in the last identity.
This completes the proof of~\eqref{2727},
up to renaming the constants.

Now we define~$t_k :=\frac14+\frac{1}{2^k}$, for any~$k\ge2$.
Let also~$w_k:= r_o^{-n}\,V(t_k r_o)$. Notice that~$t_{k+1}\ge1/4$.
Then we use~\eqref{2727}
with~$t:=t_k$ and we obtain that
$$ C t_k^{1-\sigma} r_o^{1-\sigma} \, V(t_k r_o)\ge
\int_{r_o/4}^{t_k r_o} \big( V(r)\big)^{\frac{n-\sigma}{n}}\,dr
\ge \int_{t_{k+1}r_o}^{t_k r_o} \big( V(r)\big)^{\frac{n-\sigma}{n}}\,dr.$$
Thus, since~$V(\cdot)$ is monotone,
$$ C t_k^{1-\sigma} r_o^{1-\sigma}\, V(t_k r_o)\ge
\big( t_kr_o- t_{k+1}r_o\big)\,
\big( V(t_{k+1}r_o)\big)^{\frac{n-\sigma}{n}}= 
\frac{r_o}{2^{k+1}}\,\big( V(t_{k+1}r_o)\big)^{\frac{n-\sigma}{n}}.$$
This can be written as
$$ w_{k+1}^{\frac{n-\sigma}{n}} =
r_o^{\sigma-n}\,\big( V(t_{k+1} r_o)\big)^{\frac{n-\sigma}{n}}
\le 2^{k+1}\,C\,t_k^{1-\sigma} r_o^{-n} V(t_k r_o)=
2^{k+1}\,C\,t_k^{1-\sigma} w_k.$$
Consequently, using that~$t_k\le1$ and
possibly renaming~$C>0$, we obtain that
\begin{equation}\label{q146JK8yt}
w_{k+1}^{\frac{n-\sigma}{n}}\le C^k w_k.\end{equation}
Also, we have that~$t_2=\frac12$ and thus
$$ w_2 =r_o^{-n} V\left(\frac{r_o}{2}\right)\le r_o^{-n} V(r_o)\le\delta,$$
in view of~\eqref{6t56YUB7uI}.
Then, if~$\delta>0$ is sufficiently small,
we have that~$w_k \to0$ as~$k\to+\infty$ (see e.g. formula~(8.18)
in~\cite{alessio} for explicit bounds). This and the fact that~$t_k\ge \frac14$
say that
$$ 0 =\lim_{k\to+\infty} r_o^{-n}\,V(t_k r_o)
=\lim_{k\to+\infty} r_o^{-n}\, |B_{t_kr_o}\setminus E|
\ge r_o^{-n}\, |B_{r_o/4}\setminus E|.$$
Hence, we have that~$|B_{r_o/4}\setminus E|=0$,
in contradiction with the assumption that~$0\in\partial E$
(in the measure theoretic sense). The proof
of Theorem~\ref{DENS} is thus complete.

\section{Proof of Theorem~\ref{DENS2}}\label{sec:dens2}

By Lemma~\ref{MAX:PLE}, we have that
\begin{equation}\label{8uh5tg6}
{\mbox{$u\ge0$ a.e. in~$\R^n$. }}\end{equation}
For any~$r\in(0,R)$ we define
$$ V(r) := |B_r\cap E| \;{\mbox{ and }}\;
a(r):={\mathcal{H}}^{n-1} \big( (\partial B_r)\cap E\big)$$
and we observe that
\begin{equation}\label{8u5tUUa:BISS}
V(r)=
\int_0^r a(t)\,dt,\end{equation}
see e.g. formula (13.3) in~\cite{maggi}.

The proof of Theorem~\ref{DENS2} is obtained by a contradiction argument.
Namely, we suppose that, for some~$r_o \in (0,R/2)$ we have
that
\begin{equation}\label{6t56YUB7uI:BIS}
V(r_o) = |B_{r_o}\cap E|\le \delta_*\, r_o^n
\end{equation}
and we derive a contradiction if~$\delta_* >0$ is sufficiently small.

We let~$A:=B_r\cap E$.
Let also~$\tilde v$ be the minimizer of the Dirichlet
energy in~$B_{r_o}$ among all the possible candidates~$v:\R^n\to\R$,
such that~$v=u$ outside~$B_{r_o}$, $v-u\in H^1_0(B_{r_o})$
and~$v=0$ a.e. in~$E^c\cup A$ (for the existence
and the uniqueness of such harmonic replacement see
e.g. page~481 in~\cite{salsa}). By~\eqref{8uh5tg6}
and Lemma~2.3 in~\cite{salsa} we have that
\begin{equation}\label{hj78FD7J}
{\mbox{$\tilde v\ge0$
a.e. in~$\R^n$.}}\end{equation}
Now we set~$F:=E\setminus A$. We observe that~$\tilde v=0$ a.e. in~$F^c =E^c\cup A$
by construction. This and~\eqref{hj78FD7J} give
that~$(\tilde v,F)$ is an admissible pair, and recall also that~$\tilde v-u\in
H^1_0(B_{r_o})\subseteq H_0^1(\Omega)$.
Hence, the minimality of~$(u,E)$ gives that
\begin{eqnarray*}
0 &\le & {\mathcal{E}}_\Omega(\tilde v,F)-{\mathcal{E}}_\Omega(u,E)\\
&=& \int_\Omega |\nabla \tilde v(x)|^2\,dx
- \int_\Omega |\nabla u(x)|^2\,dx+\Phi\Big( \PPer_\sigma (F,\Omega)\Big)
-\Phi\Big( \PPer_\sigma (E,\Omega)\Big).
\end{eqnarray*}
Using this and the fact that~$\tilde v$ and~$u$ coincide
outside~$B_{r_o}$, we obtain that
\begin{equation}\label{j7842S}
\Phi\Big( \PPer_\sigma (E,\Omega)\Big) - \Phi\Big( \PPer_\sigma (F,\Omega)\Big)
\le \int_{B_{r_o}} |\nabla \tilde v(x)|^2\,dx
- \int_{B_{r_o}} |\nabla u(x)|^2\,dx.\end{equation}
Now we take~$\tilde w$ to be the minimizer of the Dirichlet energy
in~$B_{r_o}$ among all the functions~$w:\R^n\to\R$,
such that~$w=u$ outside~$B_{r_o}$, $w-u\in H^1_0(B_{r_o})$
and~$w=0$ a.e. in~$E^c$. We remark that~$u$ is a competitor with
such~$\tilde w$ and therefore
$$ \int_{B_{r_o}} |\nabla \tilde w(x)|^2\,dx
\le \int_{B_{r_o}} |\nabla u(x)|^2\,dx.$$
Plugging this into~\eqref{j7842S}, we deduce that
$$ \Phi\Big( \PPer_\sigma (E,\Omega)\Big) - \Phi\Big( \PPer_\sigma (F,\Omega)\Big)
\le \int_{B_{r_o}} |\nabla \tilde v(x)|^2\,dx
- \int_{B_{r_o}} |\nabla \tilde w(x)|^2\,dx.$$
This and Lemma~2.3 in~\cite{CSV} imply that
\begin{equation}\label{9s11234h:OPRE}
\Phi\Big( \PPer_\sigma (E,\Omega)\Big) - \Phi\Big( \PPer_\sigma (F,\Omega)\Big)
\le C\,{r_o}^{-2}\, |A|\,\|\tilde w\|_{L^\infty(B_{r_o})}^2.\end{equation}
Since, by Lemma~2.3 in~\cite{salsa},
we know that~$\tilde w\ge0$ a.e. in~$\R^n$ and is subharmonic,
we have that~$w$ in~$B_{r_o}$
takes its maximum along~$\partial B_{r_o}$,
where it coincides with~$u$. Hence
\begin{equation}\label{9s11234h}
\|\tilde w\|_{L^\infty(B_{r_o})}\le \sup_{\partial B_{r_o}} u.\end{equation}
Now we observe that condition~\eqref{JH:2345aua123}
allows us to use Theorem~\ref{GFB}, which gives that
$$ \sup_{\partial B_{r_o}} u\le C\,\sqrt{L_Q}\,r_o^{1-\frac\sigma2},$$
for some~$C>0$.
Hence~\eqref{9s11234h} gives that
$$ \|\tilde w\|_{L^\infty(B_{r_o})}\le C\,\sqrt{L_Q}\,{r_o}^{1-\frac\sigma2}.$$
Thus, recalling~\eqref{9s11234h:OPRE}, and possibly renaming constants,
we conclude that
\begin{equation}\label{9s11234h:OPRE:BIS}
\Phi\Big( \PPer_\sigma (E,\Omega)\Big) - \Phi\Big( \PPer_\sigma (F,\Omega)\Big)
\le C\,{r_o}^{-\sigma}\, |A|\,L_Q.\end{equation}
Now we claim that
\begin{equation}\label{9s11234h:OPRE:TRIS}
\Per_\sigma (E,\Omega) - \Per_\sigma (F,\Omega)
\le C\,c_o^{-1}\,{r_o}^{-\sigma}\, |A|\,L_Q,\end{equation}
where~$c_o>0$ is the one introduced in~\eqref{an2383:li:12}.
To check this, we may suppose that~$\lambda_1:=\Per_\sigma (E,\Omega) >
\Per_\sigma (F,\Omega)=:\lambda_2$, otherwise we are done. 
Then, by~\eqref{LIBO:LALL}, both~$\lambda_1$ and~$\lambda_2$ belong to~$[0,Q]$,
therefore we can make use of~\eqref{an2383:li:12} and obtain
\begin{eqnarray*}
&& \Phi\Big( \PPer_\sigma (E,\Omega)\Big) - \Phi\Big( \PPer_\sigma (F,\Omega)\Big)
=\Phi(\lambda_1)-\Phi(\lambda_2)\\
&&\qquad=\int_{\lambda_2}^{\lambda_1} \Phi'(t)\,dt \ge c_o\, (\lambda_1-\lambda_2)=
c_o\, \Big(\PPer_\sigma (E,\Omega)-\PPer_\sigma (F,\Omega)\Big)
\end{eqnarray*}
and then it follows from~\eqref{9s11234h:OPRE:BIS} that
\begin{equation} \label{9i12dyjn}
\PPer_\sigma (E,\Omega) - \PPer_\sigma (F,\Omega)
\le C\,c_o^{-1}\,{r_o}^{-\sigma}\, |A|\,L_Q.\end{equation}
Now we observe that~$E\setminus\overline{B_r}=F\setminus\overline{B_r}$,
therefore, using~\eqref{ssJK:1qs:BIS}
and~\eqref{ssJK:1qs:BIS:2}, we see that
\begin{eqnarray*} \PPer_\sigma (E,\Omega) - \PPer_\sigma (F,\Omega)
&=& \Per_\sigma (E,\overline{B_r}) - \Per_\sigma (F,\overline{B_r})\\
&=&\Per_\sigma (E,\Omega) - \Per_\sigma (F,\Omega).\end{eqnarray*}
Putting together this and~\eqref{9i12dyjn} we obtain~\eqref{9s11234h:OPRE:TRIS}.

Now we show that, for a.e.~$r\in(0,{r_o})$,
\begin{equation}\label{TO:ST71}
\Per_\sigma (A,\R^n)\le 
\left\{
\begin{matrix}
C\, \Big( a(r)+c_o^{-1}\,{r_o}^{-\sigma}\, |A|\,L_Q\Big) & {\mbox{ if }} \sigma=1,\\
C\,\left(
\displaystyle\int_0^r a(\rho) \,(r-\rho)^{-\sigma}\,d\rho
+c_o^{-1}\,{r_o}^{-\sigma}\, |A|\,L_Q\right)
& {\mbox{ if }} \sigma\in(0,1).
\end{matrix}
\right.
\end{equation}
To prove~\eqref{TO:ST71} we distinguish the cases~$\sigma=1$
and~$\sigma\in(0,1)$. If~$\sigma=1$, we notice that~$A\setminus\overline{B_r}
=(B_r\cap E)\setminus\overline{B_r}=\varnothing$, hence, by Lemma~\ref{HJ:LEM1},
we have that
$$ \Per(A,\R^n)=\Per(A,\overline{B_r})=\Per(E\cap B_r, \overline{B_r}).$$
Hence we use the formula for the perimeter
associated with the intersection with balls (see e.g.~(15.14)
in Lemma 15.12 of~\cite{maggi}) and we obtain
\begin{equation}\label{v5h6yga45}
\begin{split}
\Per(A,\R^n) \;=\;& |\mu_{E\cap B_r}|( \overline{B_r})\\
=\;&{\mathcal{H}}^{n-1}\Big|_{E\cap (\partial B_r)}( \overline{B_r})
+ |\mu_E|\Big|_{B_r}( \overline{B_r})\\
=\;& {\mathcal{H}}^{n-1}\big( E\cap (\partial B_r)\cap\overline{B_r}\big)+
\Per(E, B_r\cap \overline{B_r})\\
=\;& {\mathcal{H}}^{n-1}\big( E\cap (\partial B_r)\big)
+\Per(E, B_r).\end{split}
\end{equation}
On the other hand, we have that~$(E\setminus B_r)^c=E^c \cup B_r$,
hence (see e.g. formula~(16.11) in~\cite{maggi})
we obtain that~$\Per(E\setminus B_r, \overline{B_r})=\Per (E^c\cup B_r,
\overline{B_r})$, for a.e.~$r\in(0,{r_o})$. Hence, by Lemma~\ref{HJ:LEM1},
\begin{equation}
\begin{split}
\label{hj89:91wsdc6yGGaG}
& \Per (E,\Omega)-\Per(F,\Omega)=
\Per (E,\overline{B_r})-\Per(F,\overline{B_r})
\\&\qquad = \Per (E,\overline{B_r})-\Per(E\setminus B_r,\overline{B_r})
=\Per (E,\overline{B_r}) - \Per (E^c\cup B_r,\overline{B_r}),
\end{split}\end{equation}
for a.e.~$r\in(0,{r_o})$.
Moreover (see e.g. formula~\eqref{JR:MA}, applied here to the complementary set),
we have that
$$ \Per(E^c \cup B_r, \overline{B_r}) =
{\mathcal{H}}^{n-1} \big( (\partial B_r)\cap E\big),$$
so we can write~\eqref{hj89:91wsdc6yGGaG} as
$$ \Per (E,\Omega)-\Per(F,\Omega)=
\Per (E,\overline{B_r}) - {\mathcal{H}}^{n-1} \big( (\partial B_r)\cap E\big).$$
In particular
$$ \Per (E,{B_r})\le
\Per (E,\overline{B_r}) =
\Per (E,\Omega)-\Per(F,\Omega)+
{\mathcal{H}}^{n-1} \big( (\partial B_r)\cap E\big).$$
Then we insert this information into~\eqref{v5h6yga45}
and we obtain that
$$ \Per(A,\R^n) \le
2{\mathcal{H}}^{n-1}\big( E\cap (\partial B_r)\big)
+\Per (E,\Omega)-\Per(F,\Omega).$$
Now we recall~\eqref{9s11234h:OPRE:TRIS}
complete the proof of~\eqref{TO:ST71} when~$\sigma=1$,
and we now focus on the case~$\sigma\in(0,1)$. For this,
we use~\eqref{9g12:LA} and we see that
$$ \Per_\sigma(E,\Omega)-\Per_\sigma(F,\Omega)=
\Per_\sigma(E,\Omega)-\Per_\sigma(E\setminus A,\Omega)
=L(A,E^c)-L(A,E\setminus A).$$
Therefore
\begin{equation}\label{9s11234h:OPRE:TRIS:A}
\begin{split}
\Per_\sigma(A,\R^n)\;& =\; L(A,A^c) = L(A, E^c)+L(A, E\setminus A)\\
&=\; \Per_\sigma(E,\Omega)-\Per_\sigma(F,\Omega) +2L(A, E\setminus A).
\end{split}\end{equation}
Now we use the fact that~$A\subseteq B_r$ and the change
of coordinates~$\zeta:=x-y$ to write
\begin{equation}\label{8.14}
\begin{split}
& L(A, E\setminus A) \le L(A, B_r^c)
=\int_{A\times B_r^c}\frac{dx\,dy}{|x-y|^{n+\sigma}}
\le \int_{\{(x,\zeta)\in A\times\R^n {\mbox{ s.t. }}|\zeta|\ge r-|x|\} }
\frac{dx\,d\zeta}{|\zeta|^{n+\sigma}}\\
&\qquad\le C\,\int_A \left[\int_{r-|x|}^{+\infty} \frac{\rho^{n-1}\,d\rho}{
\rho^{n+\sigma}}
\right]\,dx
\le C\,\int_A \frac{dx}{(r-|x|)^\sigma}.\end{split}\end{equation}
Now we observe that, by Coarea Formula (see e.g. Theorem~2
on page~117 of~\cite{EG}, applied here in codimension~$1$
to the functions~$f(x)=|x|$ and~$g(x):=\frac{\chi_A(x)}{(r-|x|)^\sigma}$),
\begin{eqnarray*}
&& \int_A \frac{dx}{(r-|x|)^\sigma} =\int_{\R}
\left[ \int_{\partial B_t} \frac{\chi_A(x)}{(r-|x|)^\sigma}
\,d{\mathcal{H}}^{n-1}(x) \right] \,dt
\\&&\qquad=\int_{0}^r
\left[ \int_{\partial B_t} \frac{\chi_E(x)}{(r-t)^\sigma}
\,d{\mathcal{H}}^{n-1}(x) \right] \,dt
= \int_{0}^r
\frac{ {\mathcal{H}}^{n-1} \big( E\cap(\partial B_t)\big)}{(r-t)^\sigma}\,dt
=\int_0^r \frac{a(t)}{(r-t)^\sigma}\,dt.\end{eqnarray*}
This and~\eqref{8.14} give that
$$ L(A, E\setminus A)\le C\,\int_0^r \frac{a(t)}{(r-t)^\sigma}\,dt.$$
So we substitute this and~\eqref{9s11234h:OPRE:TRIS}
into~\eqref{9s11234h:OPRE:TRIS:A} and we complete the proof
of~\eqref{TO:ST71} when~$\sigma\in(0,1)$.

Now we recall that~$|A|=V(r)$ and
we use the (either classical or fractional) isoperimetric inequality in the
whole of~$\R^n$ (see e.g. Theorem~3.46 in~\cite{AFP}
when~$\sigma=1$, and~\cite{frank}, or
Corollary~25 in~\cite{CVCV} when~$\sigma\in(0,1)$) and we deduce from~\eqref{TO:ST71}
that, for a.e.~$r\in(0,{r_o})$,
\begin{equation}\label{JH:89:345ssjsAA}
\big(V(r)\big)^{\frac{n-\sigma}{n}}=
|A|^{\frac{n-\sigma}{n}}\le
\left\{
\begin{matrix}
C\, \Big( a(r)+c_o^{-1}\,{r_o}^{-\sigma}\, V(r)\,L_Q\Big) & {\mbox{ if }} \sigma=1,\\
C\,\left(
\displaystyle\int_0^r a(\rho) \,(r-\rho)^{-\sigma}\,d\rho
+c_o^{-1}\,{r_o}^{-\sigma}\, V(r)\,L_Q\right)
& {\mbox{ if }} \sigma\in(0,1),
\end{matrix}
\right.\end{equation}
up to renaming~$C>0$.
Now we recall~\eqref{6t56YUB7uI:BIS}
and we notice that, if~$r\in(0,r_o)$,
$$ c_o^{-1}\,{r_o}^{-\sigma}\, V(r)\,L_Q
\le c_o^{-1}\,{r_o}^{-\sigma}\, \big(
V(r)\big)^{\frac{n-\sigma}{n}}\,\big(V({r_o})\big)^{\frac{\sigma}{n}}\,L_Q\le
\delta_*^{\frac{\sigma}{n}}\,c_o^{-1}\, \big(
V(r)\big)^{\frac{n-\sigma}{n}}\,L_Q.$$
This means that, if~$\delta_*>0$ is small enough, or more precisely if
\begin{equation}\label{78:9od:cons1}
\delta_*^{\frac{\sigma}{n}}\,c_o^{-1}\,L_Q\le \frac{1}{2C},\end{equation}
we can reabsorb\footnote{It is interesting to point out that
the possibility of absorbing the term~$C\,
c_o^{-1}\,{r_o}^{-\sigma}\, V(r)\,L_Q$ into the left hand side
of~\eqref{JH:89:345ssjsAA} crucially depends on the fact
that the power produced by the (either classical or fractional) isoperimetric
inequality and the one given by the growth result
in Theorem~\ref{GFB}
match together in the appropriate way.}
one term in the left hand side of~\eqref{JH:89:345ssjsAA}:
in this way, possibly renaming constants, we obtain that, for a.e.~$r\in(0,r_o)$,
\begin{equation*}
\big(V(r)\big)^{\frac{n-\sigma}{n}}\le
\left\{
\begin{matrix}
C\, a(r) & {\mbox{ if }} \sigma=1,\\
C\,\displaystyle\int_0^r a(\rho) \,(r-\rho)^{-\sigma}\,d\rho
& {\mbox{ if }} \sigma\in(0,1).
\end{matrix}
\right.\end{equation*}
This implies that,
for any~$t\in
\left[\frac14,\;\frac12\right]$,
we have that
\begin{equation}\label{2727:999}
\int_{r_o/4}^{tr_o} \big( V(r)\big)^{\frac{n-\sigma}{n}}\,dr
\le C t^{1-\sigma} r_o^{1-\sigma} \, V(tr_o),
\end{equation}
for some~$C>0$. Indeed, the proof of~\eqref{2727:999}
is obtained as the one of~\eqref{2727}
(the only difference is that here one has to use~\eqref{8u5tUUa:BISS}
in lieu of~\eqref{8u5tUUa}). Then, one
defines~$t_k :=\frac14+\frac{1}{2^k}$ and~$w_k:= r_o^{-n}\,V(t_k r_o)$
and observes that
\begin{equation}\label{q146JK8yt:BIS}
w_{k+1}^{\frac{n-\sigma}{n}}\le C^k w_k.\end{equation}
Indeed, \eqref{q146JK8yt:BIS}
can be obtained as in the proof of~\eqref{q146JK8yt}
(but using here~\eqref{2727:999} instead of~\eqref{2727}).
Furthermore
$$ w_2= r_o^{-n}\,V\left(\frac{r_o}{2}\right)\le \delta_*,$$
thanks to~\eqref{6t56YUB7uI:BIS}. This says that, 
\begin{equation}\label{78:9od:cons2}
{\mbox{if~$\delta_*>0$
is sufficiently small (with respect to a universal constant),}}\end{equation}
then~$w_k\to0$ as $k\to+\infty$
(see formula~(8.18)
in~\cite{alessio} for explicit bounds). 
Thus
$$ 0 =\lim_{k\to+\infty} r_o^{-n}\,V(t_k r_o)
=\lim_{k\to+\infty} r_o^{-n}\, |B_{t_kr_o}\cap E|
\ge r_o^{-n}\, |B_{r_o/4}\cap E|.$$
This is in contradiction with the assumption that~$0\in\partial E$
(in the measure theoretic sense) and so the proof
of Theorem~\ref{DENS2} is finished. We stress that
the explicit condition in~\eqref{QaQ45dfg678}
comes from~\eqref{78:9od:cons1}
and~\eqref{78:9od:cons2}.

\bigskip\bigskip

\section*{Acknowledgements}

It is a pleasure to thank Francesco Maggi for an interesting discussion
and the School of Mathematics of the University of Edinburgh
for the warm hospitality. 

This work has been supported by EPSRC grant EP/K024566/1
\emph{Monotonicity formula methods for nonlinear PDEs},
Humboldt Foundation, ERC grant 277749 \emph{EPSILON Elliptic
Pde's and Symmetry of Interfaces and Layers for Odd Nonlinearities}
and PRIN grant 2012 \emph{Critical Point Theory
and Perturbative Methods for Nonlinear Differential Equations}.

\section*{References}

\begin{biblist}

\bib{AV}{article}{
   author={Abatangelo, Nicola},
   author={Valdinoci, Enrico},
   title={A notion of nonlocal curvature},
   journal={Numer. Funct. Anal. Optim.},
   volume={35},
   date={2014},
   number={7-9},
   pages={793--815},
   issn={0163-0563},
   review={\MR{3230079}},
   doi={10.1080/01630563.2014.901837},
}

\bib{alt}{article}{
   author={Alt, H. W.},
   author={Caffarelli, L. A.},
   title={Existence and regularity for a minimum problem with free boundary},
   journal={J. Reine Angew. Math.},
   volume={325},
   date={1981},
   pages={105--144},
   issn={0075-4102},
   review={\MR{618549 (83a:49011)}},
}

\bib{ADMa}{article}{
   author={Ambrosio, Luigi},
   author={De Philippis, Guido},
   author={Martinazzi, Luca},
   title={Gamma-convergence of nonlocal perimeter functionals},
   journal={Manuscripta Math.},
   volume={134},
   date={2011},
   number={3-4},
   pages={377--403},
   issn={0025-2611},
   review={\MR{2765717 (2012d:49084)}},
   doi={10.1007/s00229-010-0399-4},
}

\bib{AFP}{book}{
   author={Ambrosio, Luigi},
   author={Fusco, Nicola},
   author={Pallara, Diego},
   title={Functions of bounded variation and free discontinuity problems},
   series={Oxford Mathematical Monographs},
   publisher={The Clarendon Press, Oxford University Press, New York},
   date={2000},
   pages={xviii+434},
   isbn={0-19-850245-1},
   review={\MR{1857292 (2003a:49002)}},
}

\bib{salsa}{article}{
   author={Athanasopoulos, I.},
   author={Caffarelli, L. A.},
   author={Kenig, C.},
   author={Salsa, S.},
   title={An area-Dirichlet integral minimization problem},
   journal={Comm. Pure Appl. Math.},
   volume={54},
   date={2001},
   number={4},
   pages={479--499},
   issn={0010-3640},
   review={\MR{1808651 (2001k:49003)}},
   doi={10.1002/1097-0312(200104)54:4$<$479::AID-CPA3$>$3.3.CO;2-U},
}

\bib{BoBrM}{article}{
   author={Bourgain, Jean},
   author={Brezis, Ha{\"{\i}}m},
   author={Mironescu, Petru},
   title={Another look at Sobolev spaces},
   journal={In: J. L. Menaldi, E. Rofman and A. Sulem (Eds.), 
Optimal Control and
Partial Differential Equations, IOS Press},
   date={2001},
   pages={439--455},
}

\bib{CKL}{article}{
   author={Caffarelli, L. A.},
   author={Karakhanyan, A. L.},
   author={Lin, Fang-Hua},
   title={The geometry of solutions to a segregation problem for
   nondivergence systems},
   journal={J. Fixed Point Theory Appl.},
   volume={5},
   date={2009},
   number={2},
   pages={319--351},
   issn={1661-7738},
   review={\MR{2529504 (2010j:35036)}},
   doi={10.1007/s11784-009-0110-0},
}
	
\bib{CRS}{article}{
   author={Caffarelli, L.},
   author={Roquejoffre, J.-M.},
   author={Savin, O.},
   title={Nonlocal minimal surfaces},
   journal={Comm. Pure Appl. Math.},
   volume={63},
   date={2010},
   number={9},
   pages={1111--1144},
   issn={0010-3640},
   review={\MR{2675483 (2011h:49057)}},
   doi={10.1002/cpa.20331},
}

\bib{CSV}{article}{
   author={Caffarelli, Luis},
   author={Savin, Ovidiu},
   author={Valdinoci, Enrico},
   title={Minimization of a fractional perimeter-Dirichlet integral
   functional},
   journal={Ann. Inst. H. Poincar\'e Anal. Non Lin\'eaire},
   volume={32},
   date={2015},
   number={4},
   pages={901--924},
   issn={0294-1449},
   review={\MR{3390089}},
   doi={10.1016/j.anihpc.2014.04.004},
}

\bib{CVCV}{article}{
   author={Caffarelli, Luis},
   author={Valdinoci, Enrico},
   title={Uniform estimates and limiting arguments for nonlocal minimal
   surfaces},
   journal={Calc. Var. Partial Differential Equations},
   volume={41},
   date={2011},
   number={1-2},
   pages={203--240},
   issn={0944-2669},
   review={\MR{2782803 (2012i:49064)}},
   doi={10.1007/s00526-010-0359-6},
}

\bib{DAV}{article}{
   author={D{\'a}vila, J.},
   title={On an open question about functions of bounded variation},
   journal={Calc. Var. Partial Differential Equations},
   volume={15},
   date={2002},
   number={4},
   pages={519--527},
   issn={0944-2669},
   review={\MR{1942130 (2003m:46048)}},
   doi={10.1007/s005260100135},
}

\bib{ruffini}{article}{
author={Di Castro, Agnese},
author={Novaga, Matteo},
author={Ruffini, Berardo},
author={Valdinoci, Enrico},
title={Nonlocal quantitative isoperimetric inequalities},
   journal={Calc. Var. Partial Differential Equations},
   volume={54},
   date={2015},
   number={3},
   pages={2421--2464},
   issn={0944-2669},
   doi={10.1007/s00526-015-0870-x},
}

\bib{guida}{article}{
   author={Di Nezza, Eleonora},
   author={Palatucci, Giampiero},
   author={Valdinoci, Enrico},
   title={Hitchhiker's guide to the fractional Sobolev spaces},
   journal={Bull. Sci. Math.},
   volume={136},
   date={2012},
   number={5},
   pages={521--573},
   issn={0007-4497},
   review={\MR{2944369}},
   doi={10.1016/j.bulsci.2011.12.004},
}

\bib{DFPV}{article}{
   author={Dipierro, Serena},
   author={Figalli, Alessio},
   author={Palatucci, Giampiero},
   author={Valdinoci, Enrico},
   title={Asymptotics of the $s$-perimeter as $s\searrow0$},
   journal={Discrete Contin. Dyn. Syst.},
   volume={33},
   date={2013},
   number={7},
   pages={2777--2790},
   issn={1078-0947},
   review={\MR{3007726}},
   doi={10.3934/dcds.2013.33.2777},
}

\bib{alessio}{article}{
   author={Dipierro, Serena},
   author={Figalli, Alessio},
   author={Valdinoci, Enrico},
   title={Strongly nonlocal dislocation dynamics in crystals},
   journal={Comm. Partial Differential Equations},
   volume={39},
   date={2014},
   number={12},
   pages={2351--2387},
   issn={0360-5302},
   review={\MR{3259559}},
   doi={10.1080/03605302.2014.914536},
}

\bib{stick}{article}{
   author={Dipierro, Serena},
   author={Savin, Ovidiu},
   author={Valdinoci, Enrico},
   title={Boundary behavior of nonlocal minimal surfaces},
   journal={J. Funct. Anal.},
   volume={272},
   date={2017},
   number={5},
   pages={1791--1851},
   issn={0022-1236},
   review={\MR{3596708}},
   doi={10.1016/j.jfa.2016.11.016},
}

\bib{HAR}{article}{
   author={Dipierro, Serena},
   author={Valdinoci, Enrico},
   title={On a fractional harmonic replacement},
   journal={Discrete Contin. Dyn. Syst.},
   volume={35},
   date={2015},
   number={8},
   pages={3377--3392},
   issn={1078-0947},
   review={\MR{3320130}},
   doi={10.3934/dcds.2015.35.3377},
}

\bib{serena1P}{article}{
   author={Dipierro, Serena},
   author={Valdinoci, Enrico},
    title = {Continuity and density results for a one-phase nonlocal free boundary problem},
  journal = {Ann. Inst. H. Poincar\'e Anal. Non Lin\'eaire},
     date = {2016},
doi = {https://doi.org/10.1016/j.anihpc.2016.11.001}
}

\bib{EG}{book}{
   author={Evans, Lawrence C.},
   author={Gariepy, Ronald F.},
   title={Measure theory and fine properties of functions},
   series={Studies in Advanced Mathematics},
   publisher={CRC Press, Boca Raton, FL},
   date={1992},
   pages={viii+268},
   isbn={0-8493-7157-0},
   review={\MR{1158660 (93f:28001)}},
}
	
\bib{I5}{article}{
   author={Figalli, A.},
   author={Fusco, N.},
   author={Maggi, F.},
   author={Millot, V.},
   author={Morini, M.},
   title={Isoperimetry and stability properties of balls with respect to
   nonlocal energies},
   journal={Comm. Math. Phys.},
   volume={336},
   date={2015},
   number={1},
   pages={441--507},
   issn={0010-3616},
   review={\MR{3322379}},
   doi={10.1007/s00220-014-2244-1},
}

\bib{frank}{article}{
   author={Frank, Rupert L.},
   author={Lieb, Elliott H.},
   author={Seiringer, Robert},
   title={Hardy-Lieb-Thirring inequalities for fractional Schr\"odinger
   operators},
   journal={J. Amer. Math. Soc.},
   volume={21},
   date={2008},
   number={4},
   pages={925--950},
   issn={0894-0347},
   review={\MR{2425175 (2009k:35216)}},
   doi={10.1090/S0894-0347-07-00582-6},
}

\bib{GL}{article}{
   author={Garofalo, Nicola},
   author={Lin, Fang-Hua},
   title={Monotonicity properties of variational integrals, $A_p$ weights
   and unique continuation},
   journal={Indiana Univ. Math. J.},
   volume={35},
   date={1986},
   number={2},
   pages={245--268},
   issn={0022-2518},
   review={\MR{833393 (88b:35059)}},
   doi={10.1512/iumj.1986.35.35015},
}

\bib{giaquinta}{book}{
   author={Giaquinta, Mariano},
   title={Multiple integrals in the calculus of variations and nonlinear
   elliptic systems},
   series={Annals of Mathematics Studies},
   volume={105},
   publisher={Princeton University Press, Princeton, NJ},
   date={1983},
   pages={vii+297},
   isbn={0-691-08330-4},
   isbn={0-691-08331-2},
   review={\MR{717034 (86b:49003)}},
}

\bib{imbert}{article}{
   author={Imbert, Cyril},
   title={Level set approach for fractional mean curvature flows},
   journal={Interfaces Free Bound.},
   volume={11},
   date={2009},
   number={1},
   pages={153--176},
   issn={1463-9963},
   review={\MR{2487027 (2010g:35336)}},
   doi={10.4171/IFB/207},
}

\bib{maggi}{book}{
   author={Maggi, Francesco},
   title={Sets of finite perimeter and geometric variational problems},
   series={Cambridge Studies in Advanced Mathematics},
   volume={135},
   note={An introduction to geometric measure theory},
   publisher={Cambridge University Press, Cambridge},
   date={2012},
   pages={xx+454},
   isbn={978-1-107-02103-7},
   review={\MR{2976521}},
   doi={10.1017/CBO9781139108133},
}

\bib{Maz}{article}{
   author={Maz{\cprime}ya, V.},
   author={Shaposhnikova, T.},
   title={On the Bourgain, Brezis, and Mironescu theorem concerning limiting
   embeddings of fractional Sobolev spaces},
   journal={J. Funct. Anal.},
   volume={195},
   date={2002},
   number={2},
   pages={230--238},
   issn={0022-1236},
   review={\MR{1940355 (2003j:46051)}},
   doi={10.1006/jfan.2002.3955},
}

\bib{SV}{article}{
   author={Savin, Ovidiu},
   author={Valdinoci, Enrico},
   title={Regularity of nonlocal minimal cones in dimension 2},
   journal={Calc. Var. Partial Differential Equations},
   volume={48},
   date={2013},
   number={1-2},
   pages={33--39},
   issn={0944-2669},
   review={\MR{3090533}},
   doi={10.1007/s00526-012-0539-7},
}

\bib{MR3133422}{article}{
   author={Savin, Ovidiu},
   author={Valdinoci, Enrico},
   title={Density estimates for a variational model driven by the Gagliardo
   norm},
   language={English, with English and French summaries},
   journal={J. Math. Pures Appl. (9)},
   volume={101},
   date={2014},
   number={1},
   pages={1--26},
   issn={0021-7824},
   review={\MR{3133422}},
   doi={10.1016/j.matpur.2013.05.001},
}

\bib{YEH}{book}{
   author={Yeh, J.},
   title={Real analysis},
   edition={2},
   note={Theory of measure and integration},
   publisher={World Scientific Publishing Co. Pte. Ltd., Hackensack, NJ},
   date={2006},
   pages={xxii+738},
   isbn={981-256-654-6},
   review={\MR{2250344 (2007i:28001)}},
   doi={10.1142/6023},
}
	
\end{biblist}

\end{document}